\documentclass[11pt]{amsart}      
\usepackage[a4paper,hmargin={2.5cm,2.5cm},vmargin={2.5cm,2.5cm}]{geometry}
\usepackage{geometry}               

\usepackage{graphicx}
\usepackage[dvipsnames]{xcolor}
\usepackage{amssymb,amsmath,amsthm,amsfonts}
\allowdisplaybreaks
\usepackage[english]{babel}
\usepackage[utf8]{inputenc}
\usepackage[colorlinks]{hyperref}
\hypersetup{linkcolor=blue,citecolor=blue,filecolor=black,urlcolor=blue}
\usepackage{comment}
\usepackage{mathtools}
\usepackage{bigints}
\usepackage{dsfont}
\usepackage{tikz}
\usepackage{caption}

\newtheorem{proposition}{Proposition}[section]

\newtheorem{theorem}[proposition]{Theorem}

\newtheorem{lemma}[proposition]{Lemma}

\theoremstyle{definition}

\newtheorem{definition}[proposition]{Definition}

\theoremstyle{remark}

\newtheorem{remark}[proposition]{Remark}

\numberwithin{equation}{section}

\newcommand{\R}{{\mathbb{R}}}

\title[Monotonicity of non-negative solutions in cylindrical domains]{Monotonicity of non-negative solutions of quasilinear elliptic equations in a cylindrical domain}

\author{Luigi Montoro, Luigi Muglia, Berardino Sciunzi, Giuseppe Spadaro}
\address{Department of Mathematics and Computer Science, UNICAL, Rende, CS, Italy}
\email{luigi.montoro@unical.it, luigi.muglia@unical.it}
\email{berardino.sciunzi@unical.it, giuseppe.spadaro@unical.it}
\date{\today}

\begin{document}


\begin{abstract}
We consider weak solutions to $p$-Laplace equations in  cylindrical domains under mixed homogeneous Dirichlet-Neumann boundary conditions. We assume that the right-hand side is positive and locally Lipschitz continuous and we prove that any positive  solution is monotone increasing in the $x_N$ direction for any $p>1$. As an application we prove  that  solutions to Allen-Cahn type equations are one-dimensional as well as a Liouville type result for Lane-Emden type equations.
\end{abstract}%
\maketitle

\noindent

{\footnotesize \textbf{2020 Mathematics Subject classification: }35J62, 35B06, 35B35, 35B53}.

{\footnotesize 

}

{\footnotesize \textbf{Keywords.} Quasilinear problem, qualitative properties in cylindrical domains, classification result, Liouville-type theorem}.

{\footnotesize 
}

\section{Introduction and statement of the main results}

We consider the problem

\begin{equation}\tag{$\mathcal{P}$}\label{main_problem}
\begin{cases}
-\Delta_{p} u = f(u) & \text{in } C_+\\
u > 0  & \text{in } C_+\\
\partial_{\nu} u = 0  & \text{on } \partial{\Omega} \times (0,+\infty)\\
u=0 & \text{on } \Omega \times \{0\},\\
\end{cases}
\end{equation}
where $C_+$ is a cylinder in $\mathbb{R}^N$, with $N \geq 2$, namely:
\begin{equation*}
C_+:=\Omega \times (0,+\infty),
\end{equation*}
and $\Omega \subset \R^{N-1}$ is a bounded domain, with a smooth boundary.  We shall prove that any positive solution is  monotone increasing and we will also point out some consequences in the case of interesting leading examples. \\

\

In what follows we denote by $(x',x_N)$, with $x' \in \Omega$ and $x_N \in (0,+\infty)$ a generic point in $C_+$, moreover we assume that the source term $f$ satisfies:
\begin{equation}\label{Hp}\tag{H}
\begin{split}
&\text{$f:[0,+\infty)\to \mathbb{R}$ is positive, that is $f(s)>0$ for $s > 0$, }\text{and $L$-locally Lipschitz continuous.}
\end{split}
\end{equation}
It is well known that solutions of $p$-Laplace equations are not classical also in the case of smooth domains (see \cite{DB,KM,Lib0,tolk}), thus the equation \eqref{main_problem} has to be understood in the weak sense. For this reason we next recall the following definition of mixed space: let $A$ be an open set in $\mathbb{R}^N$, and $D$ is a closed subset of $\overline A$. Define
\begin{equation}\label{test}
 W^{1,p}_D(A):=\text{the closure of} \ \{\varphi|_A:\varphi\in C_c^\infty(\mathbb{R}^N), supp(\varphi)\cap {D}= \emptyset\} \mbox{ in } {W^{1,p}(A)}.
\end{equation}
Let 
$$
 C_\sigma:=\Omega \times (0,\sigma).
$$
\begin{definition} A measurable function $u:\overline{C}_+\to \mathbb{R}$ is said to be a weak solution to \eqref{main_problem} if, for every $\sigma>0$, $u\in W^{1,p}_{\Omega}(C_\sigma)\equiv W^{1,p}_{\Omega\times\{0\} }(C_\sigma)$ and 
\begin{equation*}
\int_{C_+}|\nabla u|^{p-2}\nabla u\cdot\nabla \varphi \ dx=\int_{C_+}f(u)\varphi \ dx,
\end{equation*}
for every $\varphi:=\varphi|_{C_+}$ such that $\varphi \in C^\infty_c(\mathbb{R}^N_+)$.
\end{definition}
\noindent
Moreover, according to the regularity results in \cite{Lib0}, we assume that $u \in C_{loc}^{1,\alpha}(\overline C_+)$ and fulfills the equation in the weak sense. Actually, since in our case the domain is not of class $C^1$, the $C_{loc}^{1,\alpha}$ regularity does not follow directly by \cite{Lib0} and a standard reflection argument, w.r.t. the base of the cylinder $\overline \Omega \times \{0\}$, is needed. This is indeed possible due to the Dirichlet datum.\\

\

Our main goal is to study the monotonicity of solutions of \eqref{main_problem} in the $x_N$ direction.
To this end, a crucial step is to establish a weak comparison principle in $C_+$ for nonnegative solutions of the $p$-Laplace equation subject to mixed Dirichlet-Neumann boundary conditions. This will enable the application of the moving plane method that goes back to \cite{A,S}.\\
The moving plane method serves as a fundamental tool to prove monotonicity and symmetry properties of solutions to general PDEs. Specifically, in the semilinear case $p=2$, this study has been done in the seminal papers \cite{BN,GNN}, which addressed the problem in bounded domains. Concerning the case of unbounded domains, the main examples are provided by the whole space $\mathbb{R}^N$ and the half-space $\mathbb{R}_+^N$. For the case of the whole space we refer the reader to \cite{CGS,GNN2} and for the case of the half-space we refer to \cite{BCN,BCN2,BCN3,BF,DAN,F,FV}. Moreover, we mention some recent results in this direction \cite{BG1,BG2,BG3,BGW}.
Our result, restricted to the semilinear case $p=2$, is very much related to the interesting results in  \cite{CWY}.\\
Here the degenerate nature of the $p$-Laplacian makes the analysis substantially more delicate. Initial results concerning the case $1<p<2$ in bounded domains were established in \cite{DP}. The study of qualitative properties of solutions, in the case $p>2$ and in bounded domains, requires the use of weighted Sobolev spaces and, in particular, the use of weighted Poincar\'e type inequalities with weight $\rho = |\nabla u|^{p-2}$, see \cite{D-S-2,D-S}. Concerning the case of the whole space $\mathbb{R}^N$ we refer the reader to \cite{CFR,  CPP,CL,  CCG, CFP,DMMS,Sciunzi, V}. Problems in half-spaces have been studied by several authors, in particular the singular case $1<p<2$ in \cite{FMRS,FMS2} under the assumption of positive, locally Lipschitz continuous right-hand sides. In the degenerate case $p>2$, which presents significantly greater difficulties, the first result was obtained in \cite{FMS} under the restriction $2<p<3$ and power type nonlinearities are considered. Moreover, in \cite{FMS3} the authors removed the condition $2<p<3$ considering, in addition, a larger class of nonlinearities.\\

\
 Concerning problems in cylindrical domains involving the $p$-Laplacian, we present here the first result, up to our knowledge. The readers will appreciate a strong correlation with the regularity theory developed in \cite{MMS} regarding the optimal second order regularity up to the boundary. This issue turns out to be crucial in order to obtain the right weighted Sobolev embedding, weak and strong comparison principles and boundary Harnack inequalities. All these tools shall play a crucial role in the application of the moving plane procedure. Let us stress the fact that, in the semilinear case, some of the technical tools that we have to recover, are classical. The presence of the $p$-Laplace operator causes that we have to face also a geometrical decomposition argument that is interesting in itself.

Let us state our main result:
\begin{theorem}\label{Monotonicity}
Let $\Omega \subset \mathbb{R}^{N-1}$ be a bounded smooth  domain, $u \in C^{1,\alpha}_{loc}(\overline{C}_+)$ be a weak solution of \eqref{main_problem} and assume that \eqref{Hp} holds. Then, for any $p>1$, $u$ is monotone increasing in the $x_N$ direction.
\end{theorem}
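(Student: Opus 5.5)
We will run the moving plane method in the $x_N$ direction. For $\lambda>0$ set
$$
\Sigma_\lambda:=\Omega\times(0,\lambda),\qquad x_\lambda:=(x',2\lambda-x_N),\qquad u_\lambda(x):=u(x_\lambda).
$$
Here $u_\lambda$ solves the same equation in $\Sigma_\lambda$ and satisfies $\partial_\nu u_\lambda=0$ on $\partial\Omega\times(0,\lambda)$. On $\Omega\times\{0\}$ we have $u=0\le u_\lambda$, and on $\Omega\times\{\lambda\}$ we have $u=u_\lambda$. Let
$$
\Lambda:=\{\lambda>0:\ u\le u_{t}\ \text{in } \Sigma_{t}\ \text{for all } t\in(0,\lambda]\},\qquad \bar\lambda:=\sup\Lambda.
$$
We aim to show $\bar\lambda=+\infty$. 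Then $u(x',x_N)\le u(x',2\lambda-x_N)$ for all $\lambda$, so $u$ is nondecreasing in $x_N$. Strict monotonicity will follow from the strong comparison principle together with the Hopf lemma, which gives $\partial_{x_N}u>0$; here $f>0$ is used.

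\textbf{Step 1: weak comparison in narrow cylinders.} For $\lambda$ small we test the difference of the equations with $(u-u_\lambda)^+\chi_{\Sigma_\lambda}$. This test function is admissible because it vanishes on both bases, while the Neumann condition removes the lateral boundary term; no cutoff is needed since $\Sigma_\lambda$ is bounded. Combining the standard vector inequalities for $|\xi|^{p-2}\xi$ with the Lipschitz bound $f(u)-f(u_\lambda)\le L(u-u_\lambda)^+$ gives
$$
\int_{\Sigma_\lambda}(|\nabla u|+|\nabla u_\lambda|)^{p-2}|\nabla (u-u_\lambda)^+|^2\le L\int_{\Sigma_\lambda}((u-u_\lambda)^+)^2 .
$$
For $1<p<2$ the weight is bounded below, and a one-dimensional Poincaré inequality in $x_N$ on $(0,\lambda)$ has constant $\sim\lambda^2$. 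This forces $(u-u_\lambda)^+\equiv0$ for $\lambda$ small, and more generally gives a weak comparison principle on sets of small measure, or on narrow slabs $\Omega\times(a,b)$ with $b-a$ small. For $p>2$ we instead need a weighted Poincaré inequality with weight $\rho=|\nabla u|^{p-2}$ whose constant tends to zero with the width of the domain. This requires $\rho^{-1}\in L^{t}$ up to the Neumann boundary, i.e. the summability of $|\nabla u|^{-(p-2)t}$ and the second order regularity of \cite{MMS} up to $\partial\Omega\times(0,\infty)$. To get it we first reflect evenly across $\partial\Omega$ locally, after flattening the boundary, to handle the Neumann condition. We also need $|\nabla u|\ne0$ near the base, which follows from the Hopf lemma at $\Omega\times\{0\}$, including its corners, using $f>0$.

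\textbf{Step 2: strong comparison and reaching $\bar\lambda$.} By continuity, $u\le u_{\bar\lambda}$ in $\Sigma_{\bar\lambda}$. We then show that $u<u_{\bar\lambda}$ in $\Sigma_{\bar\lambda}$. Let $Z:=\{\nabla u=0\}$. On each connected component of $\Sigma_{\bar\lambda}\setminus Z$ the strong comparison principle applies, including at Neumann boundary points by the reflection. To propagate strict inequality across components we use that $f>0$ forces $Z$ to have zero measure and to disconnect nothing. The key observation is that if $u\equiv u_{\bar\lambda}$ on a component $\mathcal C$, then $\partial\mathcal C$ contains points where $\nabla u\neq0$, or $\mathcal C$ reaches $\Omega\times\{0\}$, where $u=0<u_{\bar\lambda}$. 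This is the geometrical decomposition argument alluded to in the introduction.

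\textbf{Step 3: pushing past $\bar\lambda$.} Suppose $\bar\lambda<\infty$. Fix a compact set $K\subset\Sigma_{\bar\lambda}$, taken away from the base but including the lateral boundary, such that $|\Sigma_{\bar\lambda+\varepsilon}\setminus K|$ is small. On $K$ we have $u<u_{\bar\lambda}$, hence $u<u_{\bar\lambda+\varepsilon}$ by uniform continuity. On the remaining set, namely a thin layer near the base together with the thin slab near $x_N=\bar\lambda$, the small-measure weak comparison from Step 1 gives $u\le u_{\bar\lambda+\varepsilon}$. This contradicts the definition of $\bar\lambda$, so $\bar\lambda=+\infty$.

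The main obstacle is the degenerate case $p>2$. It appears in Step 1, through the weighted Poincaré inequality up to the Neumann part of the boundary, and in Step 2, through the strong comparison principle near critical points on the lateral boundary. Both rest on boundary regularity of type \cite{MMS} and on the reflection argument. I would first try to derive them by even reflection across $\partial\Omega$ in local coordinates. That reflection changes the operator into an anisotropic $p$-Laplace type operator, so the known weighted estimates must be checked to hold for this class.
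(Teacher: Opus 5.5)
Your overall scheme (moving planes from the base, strict comparison at $\bar\lambda$, then a compact set plus a narrow-region weak comparison) is the paper's. However, two ingredients do not work as stated.

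\textbf{Step 2 for $1<p\le\frac{2N+2}{N+2}$.} In this range the strong comparison principle is only available on connected components of $\Sigma_{\bar\lambda}\setminus Z$. The available theory gives $|Z|=0$, but it does not give that $Z$ fails to disconnect $\Sigma_{\bar\lambda}$; this is why the paper treats this range separately. Your ``key observation'' does not close the gap either. A point of $\partial\mathcal C$ with $\nabla u\neq0$ cannot lie inside $\Sigma_{\bar\lambda}$, since it would then belong to $\mathcal C$. So it lies on $\{x_N=\bar\lambda\}$, where $u=u_{\bar\lambda}$ trivially, or on $\partial\Omega\times(0,\bar\lambda)$, and neither yields a contradiction. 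The missing idea is the local-symmetry (flux) argument that uses $f>0$. If $u\equiv u_{\bar\lambda}$ on $\mathcal C$, then $\hat{\mathcal C}=\mathcal C\cup R_{\bar\lambda}(\mathcal C)$ satisfies $\partial\hat{\mathcal C}\subset Z\cup(\partial\Omega\times(0,2\bar\lambda))$. Test the equation on $\hat{\mathcal C}$ with $G_\varepsilon(|\nabla u|)/|\nabla u|$ and use the summability of $|\nabla u|^{p-2-\beta}|D^2u|^2$. The left-hand side then tends to $0$, while the right-hand side tends to $\int_{\hat{\mathcal C}}f(u)\,dx>0$. Even so, this only gives $u<u_{\bar\lambda}$ off $Z$. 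Hence $K$ must also avoid a small-measure neighbourhood of $Z$, which your Step 3 does not provide for.

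\textbf{The weighted Poincar\'e inequality for $p>2$.} You assert this inequality on the slab $\Omega\times(t_2,\bar\lambda+\varepsilon)$ but do not prove it, and the mechanism you propose addresses the wrong difficulty. Integrability of $|\nabla u|^{2-p}$ up to $\partial\Omega$ (via reflection and the second-order estimates) is needed, but it is not the obstruction. The obstruction is that $(u-u_{\bar\lambda+\varepsilon})^+$ vanishes only on the two horizontal faces, not on $\partial\Omega\times(t_2,\bar\lambda+\varepsilon)$. This causes three problems:
\begin{itemize}
\item the available weighted Poincar\'e inequalities, with constant tending to $0$ as the measure shrinks, are for functions with zero trace on the whole boundary;
\item the one-dimensional Poincar\'e inequality in $x_N$ that you use for $p\le2$ is unavailable because $\rho$ may vanish;
\item a local reflection across $\partial\Omega$ only relocates the part of the boundary where the function is nonzero.
\end{itemize}
This is exactly what Theorem \ref{main_CUBE} is for in the paper. $\Omega$ is split into pieces of diameter $\sim\delta$ and measure $\sim\delta^{N-1}$, and the boundary pieces are convexified in boundary charts. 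Stacking these in $x_N$ gives convex cells of diameter $\sim\delta$ and volume $\sim\delta^N$. On each cell, after an odd reflection to obtain zero mean, the Gilbarg--Trudinger potential estimate (Lemma 7.16) holds with a constant depending only on $N$. Combined with the weighted potential bounds, this gives a weighted Poincar\'e inequality whose constant tends to $0$ with $\delta$, uniformly over the cells, and summing over cells gives the estimate on the slab.

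Your reflection idea does match what the paper does for the strict inequality on the lateral boundary (Theorems \ref{WHarna} and \ref{thm:strongggggggg}: Fermi coordinates, even reflection, weighted Sobolev inequality, Moser iteration). You need that result for $p>2$ if $K$ is to contain the lateral boundary. For $p\le2$, however, the paper avoids any boundary strong comparison. It keeps $K$ away from $\partial\Omega$ and handles the lateral layer with a one-dimensional Poincar\'e inequality in the normal Fermi coordinate. Your choice of $K$ containing the lateral boundary for all $p$ therefore creates an extra unproved requirement.
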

\noindent

The monotonicity result in Theorem \ref{Monotonicity} is quite general. A first interesting case arise when considering  solutions to Allen-Cahn type equations, namely for $f(u)=u(1-u^2)^{p-1}$.
In Section \ref{ACtype}, in fact,  we study  solutions of:
\begin{equation}\label{AC_INTRO}
\begin{cases}
-\Delta_{p} u = u(1-u^2)^{p-1} & \text{in } C_+\\
0< u < 1  & \text{in } C_+\\
\partial_{\nu} u = 0  & \text{on } \partial{\Omega} \times (0,+\infty)\\
u=0 & \text{on } \Omega \times \{0\}.\\
\end{cases}
\end{equation}
In particular, exploiting also our   monotonicity result, we prove that the solutions of \eqref{AC_INTRO} exhibit 1-D symmetry, namely we have the following:
\begin{theorem}\label{AC_teo}
Let $u \in C^{1,\alpha}_{loc}(\overline{C}_+)$ be a weak solution of \eqref{AC_INTRO}, then 
\begin{equation*}
u(x',x_N)=u(x_N)= \tanh\left(\frac{x_N}{(p-1)^{\frac{1}{p}}2^{\frac{1}{p}}}\right).
\end{equation*}
\end{theorem}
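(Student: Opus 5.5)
Since $f(s)=s(1-s^2)^{p-1}$ is positive on $(0,1)$ and locally Lipschitz on $[0,1]$ (we may extend it positively beyond $1$ since $0<u<1$), Theorem \ref{Monotonicity} gives $\partial_{x_N}u\ge 0$ in $C_+$. My plan is to first show that $u$ depends only on $x_N$, and then solve the resulting ODE explicitly.

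\textbf{Step 1: limit profile.} By monotonicity and boundedness, $u(x',x_N)\to w(x')$ as $x_N\to+\infty$, pointwise and increasingly. By the uniform $C^{1,\alpha}_{loc}$ estimates up to the Neumann boundary, the translates $u(\cdot,\cdot+t)$ converge in $C^1_{loc}(\overline\Omega\times\mathbb{R})$ to $w$. Then $w$ is a weak solution of $-\Delta_p w=w(1-w^2)^{p-1}$ in $\Omega$ with $\partial_\nu w=0$ on $\partial\Omega$, and $0<w\le 1$; positivity holds because $w\ge u(\cdot,1)>0$. Testing the equation with $\varphi\equiv1$ (allowed by the Neumann condition) gives
\[
\int_\Omega w(1-w^2)^{p-1}\,dx'=0 .
\]
The integrand is nonnegative, so $w\equiv 1$.

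\textbf{Step 2: one-dimensionality.} This is the step I expect to be the main obstacle. One possible approach is a sliding/comparison argument against the one-dimensional solution $v(x_N)=\tanh\bigl(x_N/((p-1)^{1/p}2^{1/p})\bigr)$, which solves the problem with the same boundary data. Sliding $v$ in $x_N$ and using the weak comparison principle in cylinders with mixed boundary conditions (developed for Theorem \ref{Monotonicity}) would show $u\ge v$ and $u\le v$. The weak comparison needs care because $f$ is not monotone near $0$.

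A cleaner alternative is an energy argument. Set
\[
F(s)=\int_s^1 t(1-t^2)^{p-1}\,dt=\frac{(1-s^2)^p}{2p}.
\]
Consider the Modica-type quantity
\[
\frac{p-1}{p}|\nabla u|^p - F(u)\le 0 .
\]
I would prove it via the $P$-function maximum principle, using the Neumann condition on the lateral boundary and the decay of $\nabla u$ at infinity from Step 1. Then I would integrate on $x'$-slices: with
\[
E(t)=\int_{\Omega\times\{t\}}\Bigl(\frac1p|\nabla u|^p-\frac{p-1}{p}\ldots\Bigr),
\]
a Hamiltonian identity obtained by testing with $\partial_{x_N}u$ shows that
\[
\int_{\Omega}\Bigl(|\nabla u|^{p-2}(\partial_N u)^2-\frac1p|\nabla u|^p+\ldots+F(u)\Bigr)dx'
\]
is constant in $x_N$. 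Its value at $+\infty$ is $0$. Combining this with the pointwise bound forces $\nabla_{x'}u\equiv0$.

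\textbf{Step 3: ODE.} Once $u=u(x_N)$ with $u(0)=0$, $u'\ge0$ and $u\to1$, the first integral reads
\[
(p-1)|u'|^p=\frac{(1-u^2)^p}{2}.
\]
Hence
\[
u'=\frac{1-u^2}{(2(p-1))^{1/p}},
\]
and integrating gives the stated $\tanh$ profile.
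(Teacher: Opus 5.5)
\textbf{Gap in Step 2.} Your Steps 1 and 3 are fine. Step 1 is the paper's Lemma on the limit profile, and Step 3 reproduces the one-dimensional profile of Lemma \ref{MI_AC}. The last deduction in your energy route is also correct. If you had the pointwise bound $\frac{p-1}{p}|\nabla u|^p\le F(u)$, then the conserved quantity $H(t)=\int_\Omega\bigl(|\nabla u|^{p-2}u_{x_N}^2-\frac1p|\nabla u|^p-F(u)\bigr)\,dx'$ (the lateral flux vanishes since $u_\nu=0$ and $\nu_N=0$) is $0$ by Step 1. Then $0=H(t)\le-\int_\Omega|\nabla u|^{p-2}|\nabla_{x'}u|^2\,dx'$ forces $\nabla_{x'}u\equiv0$.

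The gap is that this gradient bound is never proved, and the $P$-function maximum principle you invoke does not give it here. At the base, $P=\frac{p-1}{p}|u_{x_N}|^p-\frac1{2p}$ has no a priori sign; $H(0)=0$ only says its average vanishes. This part can be removed by odd reflection, since $f$ is odd. The real obstruction is the lateral boundary. Differentiating $u_\nu=0$ along $\nabla u$ gives $\partial_\nu P=-(p-1)|\nabla u|^{p-2}\,\mathrm{II}(\nabla u,\nabla u)$, where $\mathrm{II}$ is the second fundamental form of $\partial\Omega$ with respect to the outer normal. So a positive maximum of $P$ on $\partial\Omega\times(0,\infty)$ contradicts Hopf's lemma only when $\Omega$ is convex. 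This is the same convexity obstruction as in the Casten--Holland/Matano theory of Neumann problems. The theorem is stated for an arbitrary smooth bounded $\Omega$, so your energy route could at best cover convex cross-sections.

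\textbf{The sliding alternative.} This is the route the paper takes, but you only name it, and the tool you cite would not work. The weak comparison principle from the proof of Theorem \ref{Monotonicity} is a narrow-domain principle: it relies on Poincar\'e constants that are small on thin slabs. It says nothing on the unbounded region $\Omega\times(\theta,\infty)$. The paper handles the non-monotonicity of $f$ that you flag as follows.

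First, it fixes $\eta_1<1$ with $f'\le-D<0$ on $(\eta_1,1)$. Using the uniform limit from Step 1, it then fixes $\theta$ such that $u_\tau,v>\eta_1$ for $x_N>\theta$.

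Second, it proves Lemma \ref{confrontopiccolo}: $u\ge v$ on $\Omega\times[0,\theta]$ implies $u\ge v$ in $C_+$. The proof tests with $(v-u)^+$ times an $x_N$-cut-off, and the term $-D$ absorbs the cut-off error as $R\to\infty$.

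Third, it slides $u_\tau=u(x',x_N+\tau)$ down from large $\tau$ to $\bar\tau=\inf\{\tau: u_\tau\ge v\}$. It rules out $\bar\tau>0$ by showing $u_{\bar\tau}>v$ on the compact set $\overline\Omega\times[0,\theta]$:
\begin{itemize}
\item at the base, because $u_{\bar\tau}>0=v$;
\item in the interior, by the strong comparison principle;
\item on the lateral boundary, by Hopf's lemma. The gradients are nonzero there since $v'>0$, and Hopf would then contradict $\partial_\nu u_{\bar\tau}=\partial_\nu v=0$.
\end{itemize}
Uniform continuity then lets it lower $\tau$ further, which contradicts the definition of $\bar\tau$. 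The reverse inequality is obtained by sliding $v$ instead. No convexity of $\Omega$ is used anywhere in this argument.
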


In cases when such a rigidity result cannot be expected, the monotonicity of the solutions induces, in any case, the stability of the solutions. Such an information has to be understood in the right meaning in our context. 
An important consequence is that this can furthermore be employed in the derivation of Liouville-type theorems, see \cite{DFP,DSS,F2,FVS,FSVuono}. 
In particular, in the case $p\geq 2$, our monotonicity result holds for Lane-Emden type equations, namely in the case $f(u)=u^q$ with $q > p-1$. This enables us to prove the following theorem.
\begin{theorem}\label{Stabilità}
Let $p\geq 2$ and  $u\in C_{loc}^{1,\alpha}(\overline C_+)$ be a solution of
\begin{equation}\label{main_problem_S}
\begin{cases}
-\Delta_{p} u = u^q & \text{in } C_+\\
u \geqslant 0  & \text{in } C_+\\
\partial_{\nu} u = 0  & \text{on } \partial{\Omega} \times (0,+\infty)\\
u=0 & \text{on } \Omega \times \{0\},\\
\end{cases}
\end{equation}
with $q > p-1$. Then $u\equiv 0$.
\end{theorem}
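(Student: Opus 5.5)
Since $p\ge 2$ and $q>p-1$, the nonlinearity $f(s)=s^q$ satisfies \eqref{Hp}. We first reduce to the positive case. If $u\ge0$ is a nontrivial solution, $-\Delta_p u=u^q\ge 0$. The strong maximum principle of V\'azquez type applies in the interior, and by the Hopf lemma also at the Neumann part of the boundary (after even reflection across $\partial\Omega\times(0,+\infty)$, or directly via Hopf). Hence either $u\equiv0$ or $u>0$ in $C_+$. So assume $u>0$. Then Theorem \ref{Monotonicity} gives $\partial_{x_N}u\ge0$ in $C_+$, and by the strong comparison principle for the linearized operator, in fact $\partial_{x_N}u>0$ wherever $\nabla u\neq 0$.

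The key step is to turn monotonicity into stability. Put $w=\partial_{x_N}u$. Differentiating the equation in $x_N$, which is allowed in the weighted space $H^{1,2}_\rho$ with $\rho=|\nabla u|^{p-2}$ by the second order regularity of \cite{MMS}, shows that $w$ weakly solves the linearized equation
\begin{equation*}
-\mathrm{div}\big(|\nabla u|^{p-2}\nabla w+(p-2)|\nabla u|^{p-4}(\nabla u\cdot\nabla w)\nabla u\big)=q u^{q-1}w.
\end{equation*}
The tangential Neumann condition is inherited: $\partial_\nu w=0$ on $\partial\Omega\times(0,+\infty)$. Since $w>0$, the standard Picone / logarithmic test-function trick (testing with $\psi^2/w$, suitably regularized) yields the stability inequality
\begin{equation*}
\int_{C_+}|\nabla u|^{p-2}|\nabla\psi|^2+(p-2)|\nabla u|^{p-4}(\nabla u\cdot\nabla\psi)^2\,dx\ \ge\ q\int_{C_+}u^{q-1}\psi^2\,dx
\end{equation*}
for all $\psi\in C^1_c(\mathbb{R}^N)$ vanishing near $\Omega\times\{0\}$. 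No boundary terms arise on the lateral boundary because of the Neumann conditions.

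Next comes the Farina-type Liouville argument, as in \cite{DFP,F2}. Test the equation with $u^{\gamma}\varphi^m$ and the stability inequality with $\psi=u^{(\gamma+p-1)/2}\varphi^{m/2}$, where $\varphi=\varphi(x_N)$ is a cutoff equal to $1$ on $[1,R]$ and supported in $(0,2R)$. Cutting near $x_N=0$ is harmless because $u=0$ there, so instead we may let $\varphi$ depend only on $x_N$ and take $\psi$ vanishing at the base through the factor $u$. Combining the two identities gives, for $\gamma$ in the admissible range determined by $q>p-1$,
\begin{equation*}
\int_{C_+\cap\{x_N<R\}}u^{q+\gamma}\le C\int_{C_+}u^{\gamma+p-1}|\varphi'|^p\varphi^{m-p}\le C R^{-\frac{(\gamma+p)q}{q-p+1}}\,|C_{2R}|\le C R^{1-\frac{(\gamma+p)q}{q-p+1}}.
\end{equation*}
The last two bounds use H\"older and Young. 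The key point is that the cylinder has only linear volume growth, since $|C_{2R}|\sim R$ and the domain is effectively one-dimensional at infinity. The exponent is therefore negative for every admissible $\gamma$ (indeed already $\gamma$ near $0$ works), with no Sobolev-type restriction on $q$. Letting $R\to\infty$ gives $u\equiv0$, a contradiction.

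The main obstacle is the rigorous derivation of the stability inequality for $p>2$. It needs $w\in H^{1,2}_{\rho,loc}$ up to the Neumann boundary, and it needs the Picone argument to work despite the critical set $Z_u=\{\nabla u=0\}$, where $w=0$. I would handle this by using the regularity of \cite{MMS} and the fact that $|Z_u|=0$, and by testing with $\psi^2/(w+\varepsilon)$ and letting $\varepsilon\to0$ via Fatou. This is also where the restriction $p\ge2$ enters.
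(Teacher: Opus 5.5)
Your outline is the paper's argument in substance. You reduce to $u>0$, apply Theorem \ref{Monotonicity}, and derive stability from $u_{x_N}>0$ by testing the linearized equation with $\psi^2/(\varepsilon+u_{x_N})$ and letting $\varepsilon\to0$. You then conclude with the Farina-type estimate of Proposition \ref{pro:stab}, using a cutoff in $x_N$ only, where the linear volume growth of the cylinder makes the exponent negative for every $q>p-1$.

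The genuine difference is how the base $\Omega\times\{0\}$ is handled. The paper extends $u$ oddly to $\Omega\times\mathbb{R}$ and applies Proposition \ref{pro:stab} with arbitrary $\psi\in C^\infty_c(\mathbb{R}^N)$. This requires stability of the extension for test functions not vanishing on $\{x_N=0\}$, which does not follow formally from stability in $C_+$. It does hold, because the extension's $x_N$-derivative is positive on the base as well (Proposition \ref{N_mon}). You instead stay in $C_+$ and let the factor $u$ make the test functions vanish on the base, so only stability in $C_+$ is used. The price is a density step near the base. With $\gamma=1$ this is immediate: $u\varphi^m$ and $u\varphi^{m/2}$ are $C^1$ up to the base, $u=O(x_N)$, and $|\nabla u|^{p-2}$ is bounded there, so inserting a cutoff $\eta_\delta(x_N)$ costs $O(\delta)$. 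The paper's own test functions $u\varphi^p$ and $u\varphi^{p/2}$ already vanish on the base, so your remark shows the reflection is dispensable. Your explicit reduction from $u\ge0$ to $u>0$ is also needed, and the paper skips it.

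Several computations need fixing.

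\begin{itemize}
\item For $p\neq2$, the stability test function that matches $u^\gamma\varphi^m$ in the equation is $u^{(\gamma+1)/2}\varphi^{m/2}$, not $u^{(\gamma+p-1)/2}\varphi^{m/2}$. With your choice the powers of $u$ in the two identities do not match.
\item The absorption step then requires $4\gamma q>(p-1)(\gamma+1)^2$. This holds for $\gamma=1$ exactly when $q>p-1$, but fails for $\gamma$ near $0$, so $\gamma$ near $0$ is not admissible. Take $\gamma=1$, as the paper does.
\item The resulting bound is $\int u^{q+\gamma}\varphi^m\le C R^{1-p\frac{q+\gamma}{q-p+1}}$, not your exponent. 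It is still negative.
\item You cannot get $|\{u_{x_N}=0\}|=0$ from a strong maximum principle applied componentwise in $C_+\setminus Z_u$, since that does not exclude $u_{x_N}\equiv0$ on some component. Use instead the global strong comparison principle for the linearized operator quoted at the end of the proof of Theorem \ref{Monotonicity}. It applies since $p\ge2>\frac{2N+2}{N+2}$ and gives $u_{x_N}>0$ in all of $C_+$, so in fact $Z_u\cap C_+=\emptyset$.
\end{itemize}
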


For the reader's convenience we provide a description  of the structure of the paper:
\begin{itemize}
	\item[-] As remarked above, a crucial issue in our proof is the application of \textit{weighted Sobolev inequalities}
	that are based on potential estimates. This will require a fine cube decomposition\textit{} that we
	deduce in Section \ref{notations}. In Section \ref{notations} we also prove the \textit{boundary type Harnack inequality} basing on a weighted Sobolev inequality and the \textit{Moser iteration scheme}. 
	\item[-] Section \ref{monomono} is devoted to the proof of Theorem \ref{Monotonicity} that is based on a refined version of the \textit{moving plane procedure}. To exploit this technique we need to recover \textit{weak and strong comparison principles} and, furthermore, a suitable version of the \textit{Hopf boundary lemma}. 
	\item[-] In Section \ref{liouville} we will prove the \textit{rigidity result} Theorem \ref{AC_teo} and the \textit{Liouville type result} Theorem \ref{Stabilità}. The proof of Theorem \ref{AC_teo} benefit of the monotonicity result since this allows to study the limiting profile and, consequently, to exploit the \textit{sweeping principle}. The Liouville type result follows actually by the fact that monotone solutions are actually \textit{stable solutions} (in a suitable meaning here). The technique that we will use in this issue is the one introduced in \cite{F2}, as developed in \cite{DFSV} for the quasilinear case.
\end{itemize}

\section{Cube decomposition of $\Omega$ and Harnack inequalities}\label{notations}
\noindent We prove here two results that will be used in the proof of Theorem \ref{Monotonicity}.\\ Generic fixed numerical constants will be denoted by $C$ (with subscript in some case) and will be allowed to vary within a single line or formula. We also denote by $|A|$ the Lebesgue measure of the set $A$.\\

Later one we will base the proof of comparison principles on the use of a weighted Poincar\'e inequality. The lack of the zero Dirichlet datum all over the boundary, causes that we need to use a suitable version arising from potential estimates. We shall be therefore reduced to use a domain decomposition that we require to have specific geometric properties. This is why we start with the following:

\begin{theorem}\label{main_CUBE}
Let $\Omega$ be a bounded open set in $\mathbb{R}^{N-1}$, with smooth boundary. Then, there exists $\delta_0 = \delta_0(N,\Omega)>0$ such that for every $0 < \delta \leq \delta_0$, there exist $M=M(\delta)$ and open sets $A_i\subset \mathbb{R}^{N-1}$, $i=1\ldots, M$,  such that $\Omega$ is decomposed as
\begin{equation*}
\Omega = \bigcup_{i=1}^M A_i,
\end{equation*}
where $A_i$ satisfying, for each $i=1,...,M$,
\begin{equation*}
\begin{split}
&\operatorname{diam} (A_i ) \sim \delta,\\
&|A_i| \sim \delta^{N-1},
\end{split}
\end{equation*}
namely that there exist two constants $c(N,\Omega)$, $C(N,\Omega)$ such that:
\begin{equation*}
c(N,\Omega) \delta \leq \operatorname{diam} (A_i) \leq C(N,\Omega) \delta,
\end{equation*}
\begin{equation*}
c(N,\Omega) \delta^{N-1} \leq |A_i| \leq C(N,\Omega) \delta^{N-1}.
\end{equation*}
\end{theorem}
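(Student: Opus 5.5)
We decompose $\Omega$ with a grid of cubes and then repair the cells that meet $\partial\Omega$. Fix $\delta>0$ and consider the lattice of closed cubes $Q_k=\delta k+[0,\delta]^{N-1}$, $k\in\mathbb{Z}^{N-1}$. Since $\Omega$ is bounded, only finitely many cubes meet $\Omega$. Split them into \emph{interior} cubes, with $Q_k\subset\Omega$, and \emph{boundary} cubes, with $Q_k\cap\partial\Omega\neq\emptyset$. For an interior cube we take $A_i$ to be a slightly enlarged open cube, for instance the open cube of side $\frac{3}{2}\delta$ with the same center, intersected with $\Omega$. This makes the union cover the closed grid faces. Such an $A_i$ satisfies $\operatorname{diam}\sim\delta$ and $|A_i|\sim\delta^{N-1}$ trivially. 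The whole difficulty is concentrated in the boundary cubes, where $Q_k\cap\Omega$ could be a tiny sliver of measure much smaller than $\delta^{N-1}$.

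To control the boundary cells we use smoothness of $\partial\Omega$ through a uniform interior cone (or ball) condition. Since $\partial\Omega$ is compact and $C^2$, there is $r_0=r_0(\Omega)>0$ such that every $x\in\partial\Omega$ has an interior tangent ball of radius $r_0$. Consequently, for $\rho\le r_0$ and every $x\in\overline\Omega$, we get $|B_\rho(x)\cap\Omega|\ge c_0\rho^{N-1}$ with $c_0=c_0(N)>0$ (half-ball density up to a constant, valid for $\rho\le r_0$). We choose $\delta_0=r_0/(4\sqrt{N})$, say. Now each boundary cube $Q_k$ is merged into a set of the form
\[
A_i:=\Omega\cap \operatorname{int}\big(\textstyle\bigcup_{j\in J_k} Q_j\big)\ \text{or simply}\ A_i:=\Omega\cap B_{2\sqrt{N}\delta}(x_k),
\]
where $x_k\in Q_k\cap\partial\Omega$. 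The simplest concrete choice is the second: for every boundary cube we pick $x_k\in Q_k\cap\partial\Omega$ and set $A_k=\Omega\cap B_{2\sqrt N\delta}(x_k)$. This ball contains $Q_k$, so $A_k\supset Q_k\cap\Omega$, and the covering property $\Omega=\bigcup A_i$ follows. Its diameter lies between $c\delta$ and $4\sqrt N\delta$; the lower bound follows from the measure bound and the isodiametric inequality. Its measure is at most $C\delta^{N-1}$ and, by the density estimate with $\rho=2\sqrt N\delta\le r_0$, at least $c_0(2\sqrt N)^{N-1}\delta^{N-1}$. Every $A_i$ is open, being the intersection of open sets. The number $M(\delta)$ is bounded by the number of lattice cubes meeting $\Omega$, roughly $C|\Omega_{\delta}|\delta^{1-N}$.

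The main obstacle is the uniform lower measure bound near $\partial\Omega$. To prove the density estimate I would first try the following argument. By the interior ball condition, for $x\in\partial\Omega$ the ball $B_{\rho/2}(x-\frac{\rho}{2}\nu(x))$ lies in $\Omega\cap B_\rho(x)$ when $\rho\le 2r_0$; here $\nu(x)$ is the outer normal. For $x\in\Omega$ we either have $\operatorname{dist}(x,\partial\Omega)\ge\rho/2$, in which case $B_{\rho/2}(x)\subset\Omega$, or we use the nearest boundary point $y$, so that $B_{\rho/2}(y)\subset B_\rho(x)$, and apply the previous case at $y$ with radius $\rho/2$. This gives $c_0=\omega_{N-1}4^{1-N}$. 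The dependence of the constants only on $N$ and $\Omega$ (through $r_0$) then yields exactly the claimed $c(N,\Omega)$ and $C(N,\Omega)$.

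If later sections of the paper need the $A_i$ to be pairwise almost disjoint or to have a bounded overlap, this construction provides bounded overlap. Each point belongs to at most $C(N)$ of the balls, since their centers come from distinct lattice cubes within distance $4\sqrt N\delta$. If the later potential estimates also need a uniform John or chord-arc property of the $A_i$, the ball construction should supply it, since for $\delta\le\delta_0$ these sets are small bi-Lipschitz-flattened half-balls.
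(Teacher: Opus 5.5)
Your argument is correct, and it takes a genuinely different route from the paper.

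The paper stays on the grid. Interior cells are kept. For each boundary cell, Lemma~\ref{esis_delta_0} (a near-flatness argument based on a Taylor expansion of the local graph of $\partial\Omega$) produces a ``fat'' cell $Q_{\delta,j}$ inside the surrounding $3\delta$-cube with $|Q_{\delta,j}\cap\Omega|>(\delta/2)^{N-1}$. Thin boundary cells are then greedily merged with such fat cells. So every $A_i$ is $\Omega$ intersected with a union of boundedly many grid cells, and the pieces are essentially disjoint.

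You instead replace each boundary cell by $\Omega\cap B_{2\sqrt N\delta}(x_k)$ with $x_k\in\partial\Omega$. You get the lower bound $|A_k|\ge\omega_{N-1}(\sqrt N\delta)^{N-1}$ directly from the uniform interior ball condition. What this buys:
\begin{itemize}
\item It removes both the merging step and the separate lemma.
\item It replaces the paper's rather informal ``the error is negligible'' step with a quantitative estimate and an explicit $\delta_0=r_0/(4\sqrt N)$.
\item Thanks to your enlargement of the interior cubes, $\Omega=\bigcup_i A_i$ holds literally with open $A_i$. Open grid cells only cover $\Omega$ up to the null set of their faces.
\end{itemize}

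The price is overlap and loss of grid structure. This matters only for the later use in Section~\ref{monomono}. There the paper sums local weighted Poincar\'e inequalities over the pieces. It also needs the interior pieces to be genuine cubes with $\overline{A_k}\subset\Omega$, hence convex, for the potential estimate of Gilbarg--Trudinger's Lemma~7.16. With your sets, the summation costs a bounded-overlap factor $C(N)$. This is harmless, since it only multiplies a constant that tends to $0$ with $\delta$. Also, an enlarged interior cube close to $\partial\Omega$ may become non-convex after intersecting with $\Omega$, so it would have to be flattened and convexified like a boundary piece.

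For the statement as posed, which asks only for a covering by open sets with the two size bounds, your construction is complete.
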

%
Before providing the proof of Theorem \ref{main_CUBE} let us consider $\delta>0$ and $Q$ a cube in $\mathbb{R}^{N-1}$ such that $\Omega \subset Q$. Consider a partition of $Q$ given by a family of finite cubes $(Q_{\delta,i})_{i=1,\ldots,\bar{N}}$ of edge $\delta$, namely
\begin{equation}\label{insiemi}
 \overline \Omega \subset \overline Q = \bigcup_{i=1}^{ \overline N} \overline Q_{\delta,i}.
\end{equation}

We divide the family of $\{Q_{\delta,i}\}_{i}$ into two subfamilies: cubes $I_{\delta,i}\in\{Q_{\delta,i}\}_{i}$ such that $\overline{I_{\delta,i}} \subset \Omega$ and cubes $I_{\delta,i}^{\partial}\in\{Q_{\delta,i}\}_{i}$ such that $\overline{I_{\delta,i}^{\partial}} \cap \partial \Omega \neq \emptyset$. We set,

\begin{equation*}
I : = \bigcup_{i=1}^{N^I} I_{\delta,i} \quad \text{and} \quad I^\partial : = \bigcup_{i=1}^{N^\partial} I_{\delta,i}^\partial, \quad \text{with } N^I + N^\partial \leq \overline N.
\end{equation*}

Moreover, for all fixed $i$ and all given $I_{\delta,i}^\partial$, let us denote by  $Q_{3\delta,i}^G$ the cube (union of $Q_{\delta,i}$) with the same center of $I_{\delta,i}^\partial$ and edge $3\delta$.
The proof of Theorem \ref{main_CUBE} will be based also  on the following:
\begin{lemma}\label{esis_delta_0}
There exists $\delta_0 > 0$ such that for every $0 < \delta \leq \delta_0$ and for every $I_{\delta,i}^\partial \in I^\partial$, there exists a cube $Q_{\delta,j}\subset Q_{3\delta,i}^G$ in the partition \eqref{insiemi}, such that:
\begin{equation}\label{star}
\begin{split}
&\operatorname{diam} (Q_{\delta,j} \cap \Omega) > \frac{\sqrt{N-1}}{2} \delta,\\
&|Q_{\delta,j}\cap \Omega| >\left (\frac{\delta}{2}\right)^{N-1}.
\end{split}
\end{equation}
\end{lemma}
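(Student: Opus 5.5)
Since $\partial\Omega$ is smooth and compact, $\Omega$ satisfies a uniform interior ball condition: there is $r_0=r_0(\Omega)>0$ such that for every $y\in\partial\Omega$ the ball $B_{r_0}(y-r_0\nu(y))$ is contained in $\Omega$, where $\nu$ is the outer unit normal. The plan is to show that, once $\delta$ is small compared with $r_0$, every boundary cube $I^\partial_{\delta,i}$ has a neighbouring cube (inside $Q^G_{3\delta,i}$) which lies entirely in $\Omega$. For such a cube $Q_{\delta,j}$ we have $Q_{\delta,j}\cap\Omega=Q_{\delta,j}$, hence $\operatorname{diam}(Q_{\delta,j}\cap\Omega)=\sqrt{N-1}\,\delta>\frac{\sqrt{N-1}}{2}\delta$ and $|Q_{\delta,j}\cap\Omega|=\delta^{N-1}>(\delta/2)^{N-1}$, which is exactly \eqref{star}.

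Concretely, fix $I^\partial_{\delta,i}$ and pick $y\in\overline{I^\partial_{\delta,i}}\cap\partial\Omega$. If $\delta\le r_0/C_N$, with $C_N$ a dimensional constant to be chosen, then near $y$ the interior ball $B:=B_{r_0}(y-r_0\nu(y))$ contains the intersection of the slab-shaped region $\{x: -(x-y)\cdot\nu(y)\ge \eta\}$ with the ball $B_{2\sqrt{N-1}\,\delta}(y)$, where $\eta\approx (2\sqrt{N-1}\,\delta)^2/(2r_0)$. This is the elementary estimate for the height of a spherical cap, and it makes $\eta$ much smaller than $\delta$ once $\delta\ll r_0$. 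Thus, locally at scale $\delta$, $\Omega$ contains the half-space $\{-(x-y)\cdot\nu\ge \eta\}$ with $\eta\le \delta/10$, say.

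The next step is to choose the cube. Among the $3^{N-1}-1$ cubes neighbouring $I^\partial_{\delta,i}$ in $Q^G_{3\delta,i}$, take the translate $I^\partial_{\delta,i}+\delta e$, where $e\in\{-1,0,1\}^{N-1}$ and $e_k=-\operatorname{sign}(\nu_k(y))$ (any choice of sign if $\nu_k=0$). We must check that every point $x$ of this cube satisfies $-(x-y)\cdot\nu\ge\eta$. Writing $x=z+\delta e$ with $z\in\overline{I^\partial_{\delta,i}}$, we have $-(x-y)\cdot\nu=-(z-y)\cdot\nu+\delta\sum_k|\nu_k|$. Here $|(z-y)\cdot\nu|\le \delta\max|\nu_k|\cdot$... and this is the main obstacle: the crude bound $|z-y|\le\sqrt{N-1}\,\delta$ is not beaten by $\delta\|\nu\|_1\ge\delta$.

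To handle it, I would argue coordinatewise. Since $z,y$ lie in the same cube, $|z_k-y_k|\le\delta$, but the shifted cube gains exactly $\delta|\nu_k|$ in each coordinate. This gives only $\ge 0$, not $\ge\eta$, in the worst case, for example when $y$ is a corner. So I would instead shift by $2\delta e$, which is still inside $Q^G_{3\delta,i}$ only if that cube is enlarged. Alternatively, I would pick the cube of the grid containing the point $y-2\sqrt{N-1}\,\delta\,\nu/\!\!\cdot$... Instead, the cleanest route is this: the point $y-c\delta\nu$ with $c=\sqrt{N-1}+1/2$ lies in $\Omega$ at distance $\ge c\delta-\eta>\sqrt{N-1}\,\delta$ from $\partial\Omega$, so the grid cube containing it lies inside $\Omega$. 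If that cube is not in $Q^G_{3\delta,i}$, I relax the requirement: I only need \eqref{star}, i.e.\ more than half the measure and diameter. So I take the neighbouring cube $Q_{\delta,j}$ containing $y-\frac{\delta}{2}\nu$-type points, which lies in $Q^G_{3\delta,i}$, and use the half-space approximation. A hyperplane through $y$ (shifted by $\eta$) cuts a cube whose center $c_j$ satisfies $-(c_j-y)\cdot\nu\ge$ some positive margin. Then more than half of the cube lies in the good half-space, by symmetry of the cube about its center, plus a margin exceeding the loss $\eta$. This gives $|Q_{\delta,j}\cap\Omega|>(\delta/2)^{N-1}$ (note $(\delta/2)^{N-1}\le\delta^{N-1}/2$). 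The piece also contains a long diagonal of length at least $\sqrt{N-1}\,\delta/2$ on the good side, namely the diagonal from the center to the farthest vertex extended. This is the delicate quantitative step, and $\delta_0$ is fixed there in terms of $r_0$ and $N$.
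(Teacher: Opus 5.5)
Your strategy matches the paper's: near a point $y\in\overline{I^\partial_{\delta,i}}\cap\partial\Omega$, replace $\partial\Omega$ at scale $\delta$ by its tangent hyperplane up to an error $\eta=O(\delta^2)$, and take a neighbouring cube on the interior side. The paper gets the error from a Taylor expansion of a local graph. Your uniform interior ball is cleaner, because it gives exactly the one-sided inclusion needed: $\{x:\ |x-y|\le 2\sqrt{N-1}\,\delta,\ (x-y)\cdot\nu\le-\eta\}\subset\Omega$ for $\eta>2(N-1)\delta^2/r_0$. However, the proposal does not close. You assert without proof that the center of your chosen cube has a positive margin, and you leave the final quantitative step open. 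Moreover, the cube you finally settle on (the one containing $y-\frac{\delta}{2}\nu$) can fail, because its center need not lie on the interior side. Take $N-1=3$, $\nu(y)=(1,1,1)/\sqrt3$, and let $y-\frac{\delta}{2}\nu$ lie just inside the corner $0$ of a grid cube $[0,\delta]^3$. Then $y\approx\frac{\delta}{2\sqrt3}(1,1,1)$ is interior to that same cube, so this cube is $I^\partial_{\delta,i}$ itself. Its center $c$ satisfies $(c-y)\cdot\nu\approx\frac{\sqrt3-1}{2}\delta>0$. Even for an exactly flat boundary, its intersection with $\Omega$ is essentially the simplex $\{x\ge0,\ x_1+x_2+x_3<\frac{\sqrt3}{2}\delta\}$, of volume $\frac{\sqrt3}{16}\delta^3<(\delta/2)^3$, so \eqref{star} fails.

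You actually had the right cube, $Q_{\delta,j}:=I^\partial_{\delta,i}+\delta e$ with $e_k=-\operatorname{sign}\nu_k(y)$, and discarded it for a reason that is irrelevant to \eqref{star}. Your obstruction only rules out full containment of that cube in $\Omega$. Full containment can in fact fail for every cube of $Q^G_{3\delta,i}$: for $N\ge3$, take $\Omega$ strictly convex near $y$, $\nu(y)=e_1$, $y$ on a grid hyperplane $\{x_1=y_1\}$, and $I^\partial_{\delta,i}$ the cube on the side $x_1\ge y_1$. So the opening claim of your plan is false in general. This is why \eqref{star} has halved constants, and why the paper claims full containment only in the exactly flat case.

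What your computation does give is $(x-y)\cdot\nu\le0$ on $\overline{Q_{\delta,j}}$, and $(c-y)\cdot\nu\le-\frac{\delta}{2}\|\nu\|_1\le-\frac{\delta}{2}$ at its center $c$. Also $\overline{Q_{\delta,j}}\subset\overline{B_{2\sqrt{N-1}\,\delta}(y)}$. Hence $Q_{\delta,j}\setminus\Omega$ lies in the slab $\{-\eta<(x-y)\cdot\nu\le0\}$ and has measure at most $C_N\eta\,\delta^{N-2}=O(\delta^N/r_0)$. This yields $|Q_{\delta,j}\cap\Omega|>(\delta/2)^{N-1}$ for $\delta$ small. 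For $N\ge3$ your symmetry argument also works: the half of $Q_{\delta,j}$ where $(x-c)\cdot\nu<0$ lies in $\{(x-y)\cdot\nu<-\frac{\delta}{2}\}$, hence in $\Omega$ once $\eta<\frac{\delta}{2}$.

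For the diameter, let $v$ be the vertex of $Q_{\delta,j}$ minimizing $(x-y)\cdot\nu$ and $v'$ the opposite vertex. Then $(v-y)\cdot\nu\le-\delta$ and $(v'-y)\cdot\nu\le0$, so $w=\frac14v+\frac34v'$ satisfies $(w-y)\cdot\nu\le-\frac{\delta}{4}$. Once $\eta<\frac{\delta}{4}$, i.e.\ $\delta\le c_N r_0$ for a dimensional constant $c_N$, both $v$ and $w$ belong to $\overline{Q_{\delta,j}\cap\Omega}$. Since $|v-w|=\frac34\sqrt{N-1}\,\delta>\frac{\sqrt{N-1}}{2}\delta$, the diameter bound follows. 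This fixes $\delta_0$ and supplies the quantitative content behind the paper's remark that ``the error we commit is negligible''.
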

\begin{proof}
Let $x_0 \in \partial \Omega$ and assume that there exists a neighborhood $V_{x_0}$ such that $V_{x_0} \cap \partial \Omega$ is flat. If the cube centered at $x_0$ satisfies \eqref{star}, we are done. Otherwise, after possibly redefining $\delta$ to be smaller, and using that $\Omega$ is a smooth domain, we can find $Q_{\delta,i} \subset Q_{3\delta,i}^G$ entirely contained in $\Omega$, hence satisfying \eqref{star}.\\
If $\partial \Omega$ is not flat, the result follows from the fact that, since $\partial \Omega$ is smooth, we can locally approximate the boundary by its tangent plane (see Figure \ref{figura}).
\begin{figure}[ht]
\centering
\begin{tikzpicture}[scale=1]

  \def\n{5}     
  \def\s{1}     

  \foreach \i in {0,1,...,\n} {
    \draw[thick] (\i*\s, 0) rectangle ++(\s, \s);
    \draw[thick] (\i*\s, 1) rectangle ++(\s, \s);
    \draw[thick] (\i*\s, -1) rectangle ++(\s, \s);
  }

  \def\m{0.3}   
  \def\c{0.2}   

  \draw[thick]
    ({-2}, { \m*(-1) + \c}) -- ({\n*\s + 3}, { \m*(\n*\s + 1) + \c });
  \draw[red, thick, domain=-2:8, samples=200, smooth, variable=\x]
    plot ({\x}, {\m*\x + \c + 0.1*sin(2*\x r)-0.35});
 \node[black] at (7, {0.3*8 + 0.2 + 0.2*sin(2*8 r) + 0.1}) {$\partial \Omega \cap U$};
 \node[black] at (4.25, {0.3*8 + 0.2 + 0.2*sin(2*8 r) - 1.1}) {$x_0$};
 \fill[black] (4.25, {0.3*8 + 0.2 + 0.2*sin(2*8 r) - 1.33}) circle (1pt);
\end{tikzpicture}
\caption{}
\label{figura}
\end{figure}
\\
For every $x_0 \in \partial \Omega$ there exists a neighborhood $U \subset \mathbb{R}^{N-1}$ of $x_0$ such that, up to rotation,
\begin{equation*}
\partial \Omega \cap U = \{(x',x_{N-1}) \in U : x_{N-1}=\varphi(x')\},
\end{equation*}
where $x'=(x_1,...,x_{N-2})$ and $\varphi : \mathbb{R}^{N-2} \rightarrow \mathbb{R}$ has the same regularity of $\partial\Omega$.\\
By Taylor expansion,  there exists $\delta_0 > 0$ small enough, such that for
$|x' - x'_0| \leq \delta_0$, 
\begin{equation*}
x_{N-1} = \varphi(x')= \varphi(x'_0) + \nabla \varphi(x'_0) \cdot (x'-x'_0) + O(\delta^2_0),
\end{equation*}
where, using the compactness of $\partial \Omega$, the choice of $\delta_0$ does not depend on $x_0$.
Thus, if $\partial \Omega$ is not perfectly flat, it is still nearly flat; hence, using the same argument as in the flat case above, we obtain \eqref{star}, since the error we commit is negligible.

\end{proof}
\noindent
Let us now call $B$ the set of the above cubes $Q_{\delta,j}$  that satisfy \eqref{star}, namely:
\begin{equation*}
B:= \{Q_{\delta,j} : \text{fulfills the conditions } \eqref{star}\}.
\end{equation*}
We are now in position to prove Theorem \ref{main_CUBE}.
\begin{proof}[Proof of Theorem \ref{main_CUBE}.] The proof is based on an iterative technique divided in three steps:\\
\textbf{Step $1$.} We start the procedure by taking a cube in the set $I^\partial$, namely $I_{\delta,1}^\partial$. If $I_{\delta,1}^\partial \in B$, then define the cube $\tilde I_{\delta,1}^\partial = I_{\delta,1}^\partial$. If, instead, $I_{\delta,1}^\partial \notin B$, then by Lemma \ref{esis_delta_0}, there exists $Q_{\delta,j} \in B \cap Q_{3\delta,1}^G$. In this case, we define $\tilde I_{\delta,1}^\partial = Q_{\delta,j} \cup C_1$, where:
\begin{equation*}
C_1 : = \bigcup_{i=1}^{N^\partial} \{ I_{\delta,i}^\partial : I_{\delta,i}^\partial \subset Q_{3\delta,1}^G,  I_{\delta,i}^\partial \notin B\}.
\end{equation*}
Notice that with this construction, we have that:
\begin{equation*}
\begin{split}
&\operatorname{diam}(\tilde I_{\delta,1}^\partial) \sim \delta,\\
&|\tilde I_{\delta,1}^\partial| \sim \delta^{N-1}.
\end{split}
\end{equation*}
\textbf{Step $2$.} Let $I_{\delta,k}^\partial \in I^\partial$, such that $I_{\delta,k}^\partial \cap \tilde I_{\delta,1}^\partial = \emptyset$. If $I_{\delta,k}^\partial \in B$, then set $\tilde I_{\delta,2}^\partial = I_{\delta,k}$.\\ 
Alternatively, if  $I_{\delta,k}^\partial \notin B$, there exists $Q_{\delta,j} \in B \cap Q_{3\delta,k}^G$ and we define $\tilde I_{\delta,2}^\partial = Q_{\delta,j} \cup C_2$, where
\begin{equation*}
C_2 := \bigcup_{i=1}^{N^\partial}\{ I_{\delta,i}^\partial : I_{\delta,i}^\partial \subset Q_{3\delta,2}^G, I_{\delta,i}^\partial \notin B, I_{\delta,i}^\partial\cap \tilde I_{\delta,1}^\partial=\emptyset \}.
\end{equation*}
\textbf{Step $3$.} Repeat Step 2 until we can construct a set  $\tilde I_{\delta,s}^\partial$ such that there exists $I_{\delta,k}^\partial\in I^{\partial}$  for which
\begin{equation*}
I_{\delta,k}^\partial \cap \left ( \tilde I_{\delta,s}^\partial \cup ... \cup \tilde I_{\delta,1}^\partial \right ) \neq \emptyset,
\end{equation*}
for $k=1,...,N^\partial$.\\
Finally we get a family of sets that satisfy the properties:
\begin{equation*}
\begin{split}
&\operatorname{diam}(\tilde I_{\delta,j}^\partial) \sim \delta,\\
&|\tilde I_{\delta,j}^\partial| \sim \delta^{N-1},
\end{split}
\end{equation*}
for any $j=1,...,s$; with $s \leq N^\partial$.\\
Therefore, up to a reindexing of the sets in the family \eqref{insiemi}, we obtain the statement of our theorem by defining the sequence of sets ${A_i}$ as follows:
\begin{equation*}
A_i =
\begin{cases}
\Omega \cap \tilde I_{\delta,i}^\partial \text{ for } i=1,...,s,\\
Q_{\delta,i}\mbox{ for the remaining indices in }\eqref{insiemi}.
\end{cases}
\end{equation*}
\end{proof}

To conclude this section we prove a boundary type Harnack inequality for our problem on the lateral surface. Here we need to exploit some notations and results involving Fermi coordinates used in \cite{MMS}. Since $\Omega \subset \mathbb{R}^{N-1}$ is a bounded domain with smooth boundary, then the lateral surface of the cylinder $C_+$ is a smooth domain in $\mathbb{R}^N$. Therefore, for any $\bar x \in \partial \Omega \times (0,+\infty)$ there exist $r>0$ and a ball $B_r(\bar x)$ such that $(\partial \Omega \times (0,+\infty)) \cap B_r(\bar x)$ can be represented by a smooth map $\gamma : A \subset \mathbb{R}^{N-1} \rightarrow \mathbb{R}^N$. For any $\delta > 0$, we set
\begin{equation*}
    B_{\delta}^+ : =\{x\in B_r(\bar x): d(x,\partial \Omega \times (0,+\infty)) < \delta\}.
\end{equation*}
By \cite{U}, there exists $\delta_0>0$ such that $B_{\delta_0}^+$ has the unique nearest point property, i.e. for any $y \in B_{\delta_0}^+$ there exists a unique $x \in B_r(\bar x)$ such that $d(x,y) = d(y,\partial \Omega \times (0,+\infty))$.\\
Let now $\bar \delta \leq \delta_0$ such that $\bar \delta = d(\Omega,\Omega')$ and define the flattening operator $\Phi : \mathbb{R}^N \rightarrow \mathbb{R}^N$ as follows: for all $x \in B_{\bar \delta}^+$, let $\bar x = \gamma(y_1,...,y_{N-1}) \in \partial \Omega \times  (0,+\infty)$ and $\eta$ be the inward normal vector in $\bar x$. Then,
\begin{equation*}
    x = \Phi(y):= \gamma(y_1,...,y_{N-1}) + y_N \eta.
\end{equation*}
Notice that in the new local-coordinate the hyperplane $\{y_N=0\}$ represents, locally, the points of $\partial \Omega \times  (0,+\infty)$. Moreover, we denote by $\mathcal{B}_{ft}^+$ the set of the $y$-space such that
\begin{equation*}
    B_{\bar \delta}^+ = \Phi(\mathcal{B}_{ft}^+).
\end{equation*}

Let $p>2$ and $\bar{x}\in \partial \Omega\times(0,+\infty)$  and let $\delta>0$ such that $7\delta<d(\bar{x},\partial \Omega\times\{0\})$. In what follows, we will denote by  $B_{C}(\bar{x},\sigma):=B(\bar{x},\sigma)\cap C_+$, $\sigma>0$.
 We exploit the Moser iteration technique to derive a weak Harnack comparison inequality. 

\begin{theorem}\label{WHarna} Let $p>2$ and assume that $f(u)$ fulfills  $(H)$.  Let $u\,\in\,C^{1,\alpha}_{loc}(\overline{C}_+)$ be a weak solution of  \eqref{main_problem} in $C_+$. 
Let $v\in C^{1,\alpha}_{loc}(\overline{B_C(\bar{x},7\delta)})$ be such that
$$
-\Delta_{p} u - f(u) \leq -\Delta_{p} v-f(v)\qquad \mbox{in } B_C(\bar{x},7\delta).
$$
Suppose that 
 $$
 u\leq v \quad \text{in}\quad B_C(\bar{x},6\delta) .
$$
Define
\[
\frac{1}{\bar{ 2}_p}=\frac{1}{2}-\frac{1}{N}+\frac{p-2}{p-1}\,\frac{1}{N}.
\]
Then for every
\begin{equation}\nonumber
0<s<\dfrac{\bar 2_p}{2},
\end{equation}
there exists a positive constant $\tilde{C}=\tilde{C}(p, L, \delta,\|v\|_{L^{\infty}},  \|\nabla u\|_{L^{\infty}},   \|\nabla v\|_{L^{\infty}})$ such that
\begin{equation*}
\|v-u\|_{L^s(B_C(\bar{x},2\delta))}\leq \tilde{C}\inf_{B_C(\bar{x},\delta)}(v-u).
\end{equation*}
The same result holds if $v$ is a solution and $u$ is a subsolution. 
\end{theorem}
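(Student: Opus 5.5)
We treat $w:=v-u\geq 0$ in $B_C(\bar x,6\delta)$ as a nonnegative supersolution of a linearized, degenerate elliptic equation, and run a weak Harnack argument in the spirit of Trudinger and of Damascelli--Sciunzi. Because the ball is centred on the lateral boundary, we first flatten with the Fermi-coordinate map $\Phi$. Since $7\delta<d(\bar x,\partial\Omega\times\{0\})$, the half-ball $B_C(\bar x,7\delta)$ only touches the Neumann part of the boundary. There both $u$ and $v$ have zero conormal derivative; the conormal condition on $v$ we take to be encoded weakly in the inequality, tested with functions not vanishing on the lateral boundary. So we may use test functions $\varphi\,\eta^p$ whose cutoff $\eta$ is supported in $B(\bar x,7\delta)$ but need not vanish on $\partial\Omega\times(0,+\infty)$.

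\textbf{Linearization.} We write
\begin{equation*}
|\nabla v|^{p-2}\nabla v-|\nabla u|^{p-2}\nabla u=A(x)\nabla w,\qquad A(x)=\int_0^1 J\bigl(\nabla u+t\nabla w\bigr)\,dt,
\end{equation*}
where $J$ is the Jacobian of $\xi\mapsto|\xi|^{p-2}\xi$. For $p>2$ this gives the two-sided bound
\begin{equation*}
c\,(|\nabla u|+|\nabla v|)^{p-2}|\xi|^2\leq A\xi\cdot\xi\leq C\,(|\nabla u|+|\nabla v|)^{p-2}|\xi|^2 .
\end{equation*}
The zero order term satisfies $|f(v)-f(u)|\leq Lw$ by (H), since $u,v$ are bounded on the region. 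Hence $w$ is a weak supersolution of
\begin{equation*}
-\operatorname{div}(A\nabla w)+Lw\geq 0,
\end{equation*}
with weight $\rho:=(|\nabla u|+|\nabla v|)^{p-2}$. The weight is bounded above, but it may degenerate where the gradients vanish.

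\textbf{Weighted Sobolev inequality and Moser scheme.} The key ingredient is a weighted Sobolev inequality
\begin{equation*}
\|\varphi\|_{L^{q}}\leq C\,\|\rho^{1/2}\nabla\varphi\|_{L^2},\qquad q<\bar 2_p ,
\end{equation*}
valid on the half-balls, for functions not vanishing on the flat (Neumann) side. To get it, we use the integrability $\int\rho^{-t}|x-y|^{-\gamma}\leq C$ for $t<1/(p-1)$ and $\gamma<N-2$; this is the regularity of \cite{MMS}, which holds up to the Neumann boundary. Combining this with potential estimates, $|\varphi-\varphi_{B}|\lesssim\int|\nabla\varphi||x-y|^{1-N}$, and with Hölder's inequality against $\rho^{-1/2}$ yields the exponent $\bar2_p$; this is exactly the formula for $1/\bar2_p$ in the statement. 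The domain is locally a half-ball in flattened coordinates, so the convex-domain potential estimate applies. Equipped with this inequality, we run the standard weak Harnack steps.

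\begin{enumerate}
\item Test with $\eta^2 (w+\varepsilon)^{\beta}$, $\beta<0$, to get reverse Hölder inequalities between $L^{s_1}$ and $L^{s_2}$ norms of $(w+\varepsilon)^{-1}$ on shrinking balls. Iterating gives $\sup_{B_C(\bar x,\delta)}(w+\varepsilon)^{-1}\leq C\|(w+\varepsilon)^{-1}\|_{L^{\tau}(B_C(\bar x,2\delta))}$ for small $\tau$.
\item Test with $\eta^2 (w+\varepsilon)^{\beta}$, $0<\beta<1$ in the range $\beta=1-$ small, to go up from small positive exponents to any $s<\bar2_p/2$. The bound $s<\bar 2_p/2$ arises because one Sobolev gain $\bar2_p/2$ is available and the iteration must stop before $\beta=1$.
\item Test with $\eta^2(w+\varepsilon)^{-1}$ to show that $\log(w+\varepsilon)$ lies in a weighted BMO. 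Then apply John--Nirenberg, via a covering by the cubes of Theorem \ref{main_CUBE} (of comparable size and measure near the boundary), to link the positive and negative exponent norms: $\|(w+\varepsilon)\|_{L^\tau}\,\|(w+\varepsilon)^{-1}\|_{L^\tau}\leq C$.
\end{enumerate}

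Chaining these steps and letting $\varepsilon\to0$ gives the claimed estimate. The constants depend only on $p$, $L$, $\delta$, and on the $L^\infty$ bounds of $v$, $\nabla u$, $\nabla v$ (through $\rho\leq C$ and the Lipschitz constant). The case where $v$ is a solution and $u$ a subsolution is identical.

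\textbf{Main obstacle.} The hardest step is the BMO/John--Nirenberg step with a degenerate weight near the Neumann boundary. The Poincaré inequality for $\log w$ must hold on every subcube of the covering, with uniform constants, and without any Dirichlet datum. This is exactly why the decomposition of Theorem \ref{main_CUBE} is needed: boundary cells $A_i$ have diameter $\sim\delta$ and measure $\sim\delta^{N-1}$. That uniformity keeps the potential-estimate constants uniform. I would first try to prove the weighted Poincaré inequality on each $A_i\times$ interval using the mean-value version of the potential estimate, checking that the Hölder exponent bookkeeping with $\rho^{-t}$ stays within $t<1/(p-1)$.
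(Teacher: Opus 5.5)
\textbf{Where the proposal matches the paper.} Your set-up (flattening near the lateral boundary, Caccioppoli inequalities for $\eta^2 w_\tau^\beta$, the weighted Sobolev inequality coming from \cite{MMS}) and your Step~1 broadly match the paper. The same holds for the final positive-exponent iteration, which corresponds to the paper's Stp(i) and its last step.

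\textbf{The gap is Step~3.} John--Nirenberg needs $\frac{1}{|Q|}\int_Q|g-g_Q|\le C$ for $g=\log w_\tau$, uniformly over \emph{all} subcubes $Q$ at every scale. With the degenerate weight $\rho=(|\nabla u|+|\nabla v|)^{p-2}$ you cannot get this. On a cube $Q_r$, combine the potential estimate, H\"older against $\rho^{-1}$, and the $\beta=-1$ Caccioppoli inequality with a cut-off adapted to $Q_{2r}$. At best this gives
\[
\frac{1}{|Q_r|}\int_{Q_r}|g-g_{Q_r}|\,dx\lesssim\Bigl(r^{2}+\frac{1}{r^{N}}\int_{Q_{2r}}\rho\,dx\Bigr)^{1/2}\Bigl(\frac{1}{r^{N}}\int_{Q_r}\rho^{-1}\,dx\Bigr)^{1/2}.
\]
So this route needs a Muckenhoupt $A_2$-type condition on $\rho$, which is not available.

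The only information you have is the summability $\int\rho^{-t}|x-y|^{-\gamma}\,dy\le C$ with $\gamma<N-2$. This gives $\int_{Q_r}\rho^{-t}\lesssim r^{\gamma}$, so $r^{-N}\int_{Q_r}\rho^{-t}$ is uncontrolled as $r\to0$. Indeed, averages of $\rho^{-1}$ blow up on cubes shrinking to a common critical point of $u$ and $v$. This is a scaling obstruction, so adjusting the H\"older exponent $t$ does not remove it.

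The cells of Theorem \ref{main_CUBE} do not help either. They are a single-scale partition of the cross-section $\Omega$, used in the paper for the Poincar\'e inequality on $D_3$ in the moving-plane step. They are not the family of all dyadic subcubes through which the Calder\'on--Zygmund argument behind John--Nirenberg descends.

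\textbf{What the paper does instead.} The paper needs no small-scale information because it uses Trudinger's device. It proves a weighted Poincar\'e--Sobolev inequality only once, on the fixed convex set $\operatorname{co}\bigl(\Phi^{-1}(B_C(\bar x,5\delta))\bigr)$. This is applied to $\log(w_\tau/k)$, with $k$ chosen so that this function has zero mean there, and yields $\|\log(w_\tau/k)\|_{L^\nu}\le\mathcal C$ independently of $\tau$.

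It then tests with $\varphi^2\bigl(|\log(w_\tau/k)|^\beta+(2\beta)^\beta\bigr)/w_\tau$ for $\beta\ge1$. This produces a Caccioppoli inequality for powers of $|\log(w_\tau/k)|$ of the same shape as \eqref{app1}. Iterating in $\beta$ between the radii $5\delta$ and $\frac52\delta$ gives exponential integrability of $|\log(w_\tau/k)|$, which is exactly the crossover
\[
\phi\bigl(r_0,\tfrac52\delta,w_\tau\bigr)\le C\,\phi\bigl(-r_0,\tfrac52\delta,w_\tau\bigr).
\]
Everything happens at the fixed scale $\delta$. This is what you should put in place of Step~3.

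\textbf{A smaller slip in Step~2.} For a supersolution, $\eta^2 w_\tau^\beta$ yields a usable Caccioppoli inequality only for $\beta<0$. The powers $w_\tau^{s}$ with $0<s<1$ come from $\beta=s-1\in(-1,0)$. The bound $s<\bar2_p/2$ is one Sobolev gain applied to exponents below $1$, not a restriction $\beta<1$ on positive test exponents.
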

\begin{proof}
The method is strongly based on the one developed by Trudinger \cite{Tru} and the notation follows that used in \cite{MeMoSc} and \cite{Mo} (see also \cite{D-S}). Hence, we prove only the main inequalities, referring the reader to \cite{MeMoSc, Mo} for the computation of the iterating scheme. For $r\neq0$, and $h$ positive, we define 
$$
 \phi(r,\sigma,h):=\left(\int_{B_{C}(\bar{x},\sigma)}|h|^r \,  dx\right)^\frac1r.
$$ 
Referring to \cite{GT}, we recall that
\begin{eqnarray*}
 \phi(p,\sigma,h)\to \sup_{B(\bar{x},\sigma)}|h|, \qquad p\to +\infty,\\
  \phi(p,\sigma,h)\to \inf_{B(\bar{x},\sigma)}|h|, \qquad p\to -\infty.\\
\end{eqnarray*}
We proceed by proving the following steps:
\begin{itemize}
\item[Stp(i):] for any fixed $r_0>0$ and $w_\tau:=v-u+\tau$, there exists a positive constant $\tilde{C}=\tilde{C}(L, p,\delta,\|v\|_{\infty},  \|\nabla u\|_{\infty},   \|\nabla v\|_{\infty})$, such that
$$
 \phi(-\infty,\delta,w_\tau)\geq \tilde{C}\phi\left(-r_0,\frac52\delta,w_\tau\right).
$$
\item[Stp(ii):] there exists $r_0>0$ such that, for a suitable $\tilde{C}=\tilde{C}(L, p,\delta,\|v\|_{\infty},  \|\nabla u\|_{\infty},   \|\nabla v\|_{\infty})$ positive, the following estimate holds
$$
 \phi\left(-r_0,\frac52\delta,w_\tau\right)\geq \tilde{C}\phi\left(r_0,\frac52\delta,w_\tau\right).
$$
\end{itemize}
With Stp(i) and Stp(ii) in force, the proof follows by Moser’s iterative method; for details, we refer the reader to \cite[(3.28), p. 1084]{MeMoSc}–\cite[(3.35), p. 1086]{MeMoSc}.

\emph{Proof of Stp(i).} Let $\varphi\in C_c^\infty(B(\bar{x},6\delta))$. Let $\tau>0$ and $u,v$ two solutions of our problem such that $v\geq u$ on $B_C(\bar{x},6\delta)$.
Define, for any $\beta<0$, 
$$
 \phi_\tau:= \varphi^2w_\tau^\beta, \mbox{ where } w_\tau:=v-u+\tau.
$$
We begin by observing that, since we assume $u,v \in C_{loc}^{1,\alpha}$, the argument must be carried out on a subdomain $C_+'$. For notational convenience, we denote this subdomain by $C_+$ throughout the proof.
By a density argument, we can use $\phi_\tau$ as a test function in the weak formulation of \eqref{main_problem} and, by subtracting, it follows that
\begin{eqnarray*}
 &&\int_{C_+}2\varphi w_\tau^\beta (|\nabla u|^{p-2}\nabla u-|\nabla v|^{p-2}\nabla v)\cdot\nabla \varphi \ dx\\
 &&\qquad+\beta \int_{C_+}\varphi^2 w_\tau^{\beta-1} (|\nabla u|^{p-2}\nabla u-|\nabla v|^{p-2}\nabla v)\cdot\nabla w_\tau \ dx\\
 &&\leq\int_{C_+}(f(u)-f(v))\phi_\tau \, dx.
\end{eqnarray*}
Taking the modulus on both sides of the previous equation and by \cite[Equation (2-2)]{D} (see also \cite{LI}), we obtain
\begin{eqnarray*}
 &&|\beta|c_2  \int_{C_+}\varphi^2 w_\tau^{\beta-1} (|\nabla u|+|\nabla v|)^{p-2}|\nabla w_\tau|^2 \, dx\\
 &&\quad\leq \int_{C_+}|f(v)-f(u)|\phi_\tau \, dx+\left| \int_{C_+}2\varphi w_\tau^\beta (|\nabla v|^{p-2}\nabla v-|\nabla u|^{p-2}\nabla u)\cdot\nabla \varphi \ dx\right|.
 \end{eqnarray*}
 Applying \cite[Equation (2-1)]{D} (see also \cite{LI}) and then the Young inequality  
\begin{eqnarray*}
 &&|\beta|c_2  \int_{C_+}\varphi^2 w_\tau^{\beta-1} (|\nabla u|+|\nabla v|)^{p-2}|\nabla w_\tau|^2 \, dx\\
 &&\quad\leq \int_{C_+}|f(u)-f(v)|\phi_\tau \, dx+2\varepsilon c_1 \int_{C_+}\varphi^2w_\tau^{\beta-1} (|\nabla u|+|\nabla v|)^{p-2}|\nabla w_\tau|^2 \ dx \\
 &&\qquad\quad+\frac{2c_1}{\varepsilon} \int_{C_+}(|\nabla u|+|\nabla v|)^{p-2}|\nabla\varphi|^2 w_\tau^{\beta+1} \ dx. 
 \end{eqnarray*} 
 Let us choose $\varepsilon=\frac{c_2|\beta|}{2c_1}$ in such a way that we can get
  \begin{eqnarray*}
 &&\frac{|\beta|c_2}{2}  \int_{C_+}\varphi^2 w_\tau^{\beta-1} (|\nabla u|+|\nabla v|)^{p-2}|\nabla w_\tau|^2 \, dx\\
 &&\qquad\leq \int_{C_+}|f(u)-f(v)|\phi_\tau \, dx+\frac{4c_1^2}{c_2|\beta|} \int_{C_+}(|\nabla u|+|\nabla v|)^{p-2}|\nabla\varphi|^2 w_\tau^{\beta+1} \ dx. 
 \end{eqnarray*} 
 By the local Lipschitz continuity of $f$, there exists a positive constant $L$ depending on $\|v\|_\infty$ such that
 \begin{equation*}
 \begin{split}
   \int_{C_+} |f(u)-f(v)|\phi_\tau dx&\leq L \int_{C_+}|u-v|\phi_\tau \, dx\\
   &\leq  L \int_{C_+}|u-v+\tau|\phi_\tau \, dx\leq  L \int_{C_+}\varphi^2 w_{\tau}^{\beta+1}\ dx.
\end{split}
\end{equation*}
 Thus, choosing $\tilde{C}:=\tilde{C}(L, p, \|v\|_{L^{\infty}},  \|\nabla u\|_{L^{\infty}},   \|\nabla v\|_{L^{\infty}})>\frac{2}{c_2}\max\{L,2c_1\}$, we obtain
  \begin{eqnarray}\label{app1}
 \nonumber &&\int_{C_+}\varphi^2 w_\tau^{\beta-1} (|\nabla u|+|\nabla v|)^{p-2}|\nabla w_\tau|^2 \, dx\\
 \nonumber && \qquad\leq \frac{2L}{|\beta|c_2} \int_{C_+}\varphi^2 w^{\beta+1}_\tau \, dx +\frac{8c_1^2}{|\beta|^2c_2^2} \int_{C_+}(|\nabla u|+|\nabla v|)^{p-2}|\nabla\varphi|^2 w_\tau^{\beta+1} \ dx \\
 &&\qquad\leq \frac{\tilde{C}}{|\beta|}\left(1+\frac{\tilde{C}}{|\beta|} \right) \int_{C_+}w_\tau^{\beta+1}(\varphi^2 +(|\nabla u|+|\nabla v|)^{p-2}|\nabla\varphi|^2) \ dx.  
 \end{eqnarray}  
\ \\
\noindent Let us now consider $\beta\neq -1$ and
$$
 \tilde{w}_\tau=w_\tau^{\frac{\beta+1}{2}}.
$$
 Using \eqref{app1}, we note that
 \begin{eqnarray}\label{monty*}
 \nonumber &&\int_{C_+}\varphi^2 (|\nabla u|+|\nabla v|)^{p-2}|\nabla \tilde{w}_\tau|^2 \, dx=\frac{(\beta+1)^2}{4}\int_{C_+}\varphi^2 w_\tau^{\beta-1} (|\nabla u|+|\nabla v|)^{p-2}|\nabla w_\tau|^2\, dx \\
&&\qquad\quad\leq\frac{(\beta+1)^2}{4} \frac{\tilde{C}}{|\beta|}\left(1+\frac{\tilde{C}}{|\beta|} \right) \int_{C_+}\tilde{w}_\tau^2(\varphi^2 +(|\nabla u|+|\nabla v|)^{p-2}|\nabla\varphi|^2) \ dx.
 \end{eqnarray}
Without loss of generality, we can eventually consider $\delta>0$ small enough such that our ball $B_{C}(\bar{x},7\delta)$ is completely contained in a ball where the unique nearest point property holds. By Fermi coordinates and the flattening operator, we define $\mathcal{B}_C^+$ as $B_{C}(\bar{x},6\delta)=\Phi(\mathcal{B}_C^+)$; then we consider  $\mathcal{B}_C^-$ defined as the reflection of $\mathcal{B}_C^+$ with respect to the hyperplane $y_N=0$ and we extend the function $(\varphi \tilde{w}_\tau)(\Phi(y))$ by its even extension on  $\mathcal{B}:=\mathcal{B}_C^+\cup\mathcal{B}_C^-$.   
Denoting by $\tilde z(y)$ the above extension on $\mathcal{B}$ and by
$$
  d_J(y):=|{det} J_\Phi(y)|, \ A(y):=[J_\Phi(y)^T]^{-1},
$$
we have
 \begin{eqnarray*}
 \int_{\mathcal{B}}|\tilde{z}(y)|^\nu d_J(y)dy\leq (\sup_{\mathcal{B}}d_J(y))  \int_{\mathcal{B}}|\tilde{z}(y)|^\nu dy.
 \end{eqnarray*} 
Using the weighted Sobolev inequality, available in the case of the weight $(|\nabla_y \tilde{u}|+|\nabla_y \tilde{v}|)^{p-2}$, for $\nu\in (2,\bar{2}_p)$ (where $\tilde{u}:=u\circ\Phi$) (thanks to \cite[Proposition 6.1]{MMS}, we obtain
\begin{equation*}
\begin{split}
\left(\int_{\mathcal{B}}|\tilde{z}(y)|^\nu d_J(y)dy\right)^\frac{2}{\nu}&\leq\left(\sup_{\mathcal{B}}d_J(y)\right)^\frac{2}{\nu} \left(\int_{\mathcal{B}}|\tilde{z}(y)|^\nu dy\right)^\frac{2}{\nu}\\
&\leq \|d_J\|_\infty^\frac{2}{\nu} C_p \int_{\mathcal{B}}
|\nabla_y\tilde{z}(y)|^2(|\nabla_y \tilde{u}|+|\nabla_y \tilde{v}|)^{p-2}dy\\
 &\leq \frac{\|d_J\|_\infty^\frac{2}{\nu}\sup_{\mathcal {B}}\|J_\Phi^T\|^{p}C_p}{\inf_{\mathcal {B}}d_J}\\
 &\qquad\qquad\times\int_{\mathcal{B}}
|A(y)\nabla_y\tilde{z}(y)|^2(|A(y)\nabla_y \tilde{u}|+|A(y)\nabla_y \tilde{v}|)^{p-2}d_J(y)dy.
\end{split}
\end{equation*} 
Note that the constant
$$
\frac{\|d_J\|_\infty^\frac{2}{\nu}\sup_{\mathcal {B}}\|J_\Phi^T\|^{p}C_p}{\inf_{\mathcal {B}}d_J},
$$
is bounded because our flattening operator is, indeed, a diffeomorphism.\\
Then, coming back to our problem, we deduce that there exists a suitable constant $\bar{C}>0$ such that
\begin{eqnarray*}
 \|\varphi \tilde{w}_\tau\|^2_{L^\nu}&\leq& 2^{-\frac{2}{\nu}} \left( \int_{\mathcal{B}}|\tilde{z}(y)|^\nu\delta(y)dy\right)^{\frac{2}{\nu}}\\
&\leq& \bar{C} \int_{\mathcal{B}}|A(y)\nabla_y\tilde{z}(y)|^2(|A(y)\nabla_y \tilde{u}|+|A(y)\nabla_y \tilde{v}|)^{p-2}d_J(y)dy\\
&=& \bar{C}\int_{C_+}(|\nabla u|+|\nabla v|)^{p-2}|\nabla(\varphi\tilde{w}_\tau)|^2 \ dx\\
&\leq&2 \bar{C}\int_{C_+}(|\nabla u|+|\nabla v|)^{p-2}(\tilde{w}_\tau^2|\nabla\varphi|^2+\varphi^2|\nabla\tilde{w}_\tau|^2) \ dx\\
\mbox{ by \eqref{monty*}}&\leq&2 \bar{C}\int_{C_+}(|\nabla u|+|\nabla v|)^{p-2}\tilde{w}_\tau^2|\nabla\varphi|^2\ dx\\
&& + \bar{C}\frac{(\beta+1)^2}{2} \frac{\tilde{C}}{|\beta|}\left(1+\frac{\tilde{C}}{|\beta|} \right) \int_{C_+}\tilde{w}_\tau^{2}(\varphi^2 +(|\nabla u|+|\nabla v|)^{p-2}|\nabla\varphi|^2) \ dx \\
&=&\bar{C}\frac{(\beta+1)^2}{|\beta|}\left(1+\frac{\tilde{C}}{|\beta|} \right) \int_{C_+}\tilde{w}_\tau^{2}(\varphi^2 +(|\nabla u|+|\nabla v|)^{p-2}|\nabla\varphi|^2) \ dx,
\end{eqnarray*}
up to redefine in a suitable way the constant $\bar{C}$.\\
Referring to \cite[Equation (3.17)]{MeMoSc} and exploiting a similar computation up to \cite[Equation (3.26)]{MeMoSc}, we obtain that for all $r_0>0$, there exists a positive constant $C>0$ such that
$$
 \phi(-\infty,\delta,w_\tau)\geq C\phi\left(-r_0,\frac52\delta,w_\tau\right),
$$
that is Stp(i).

\noindent \emph{Proof of Stp(ii).}  Using $\beta=-1$ in \eqref{app1}, we have
\begin{eqnarray}\label{unavolta}
  &&\int_{C_+}\varphi^2 w_\tau^{-2}(|\nabla u|+|\nabla v|)^{p-2}|\nabla w_\tau|^2 \, dx
 \leq {\tilde{C}}(1+\tilde{C}) \int_{C^+}\varphi^2 +(|\nabla u|+|\nabla v|)^{p-2}|\nabla\varphi|^2\ dx.  
\end{eqnarray}
 Let us now consider $\tilde{w}_\tau:=\log(w_\tau)$; then we can rewrite \eqref{unavolta} as
 \begin{equation}\label{unavolta2}
  \int_{C_+}\varphi^2(|\nabla u|+|\nabla v|)^{p-2}|\nabla \tilde{w}_\tau|^2  \, dx \leq {\tilde{C}}(1+\tilde{C}) \int_{C^+}\varphi^2 +(|\nabla u|+|\nabla v|)^{p-2}|\nabla\varphi|^2\ dx.  
\end{equation}
Denoting by $\tilde{w}_\tau^\Phi$ and $\varphi_\Phi$ the trasformated functions by Fermi coordinate fo $\tilde{w}_\tau$ and $\varphi$ respectively, the previous becomes
\begin{eqnarray*}
  &&\int_{supp(\varphi_\Phi)}\varphi_\Phi^2(|A(y)\nabla \tilde{u}|+|A(y)\nabla \tilde{v}|)^{p-2}|A(y)\nabla \tilde{w}_\tau^\Phi|^2  \, d_J(y)dy\\ &&\qquad\leq {\tilde{C}}(1+\tilde{C}) \int_{supp(\varphi_\Phi)}\varphi_\Phi^2 +(|A(y)\nabla \tilde{u}|+|A(y)\nabla \tilde{v}|)^{p-2}|A(y)\nabla\varphi_\Phi|^2\ d_J(y) dy.  
 \end{eqnarray*}
Subsequently, we need to convexify the $\Phi^{-1}(B_C(\bar{x},5\delta))$; this is because, later on, we want to apply a Poincaré–Sobolev inequality for functions that do not have zero trace on the boundary of the domain where we are working. This requires the convexity of the integration domain (we refer to \cite{FMS}).
Note that, by the regularity of $\Phi$ and computing the convex envelope $co(\Phi^{-1}(B_C(\bar{x},5\delta)))\subset \Phi^{-1}(B_C(\bar{x},6\delta))$, choosing $\varphi_\Phi$ such that  $\varphi_\Phi=1$ in $co(\Phi^{-1}(B_C(\bar{x},5\delta)))$,  the inequality above implies that
\begin{eqnarray}\label{risol}
\nonumber && \int_{co(\Phi^{-1}(B_C(\bar{x},5\delta)))} (|A(y)\nabla \tilde{u}|+|A(y)\nabla \tilde{v}|)^{p-2}|A(y)\nabla \tilde{w}_\tau^\Phi|^2\, d_J(y) dy\\
\nonumber &&\qquad \leq\int_{supp(\varphi_\Phi)}\varphi_\Phi^2 (|A(y)\nabla \tilde{u}|+|A(y)\nabla \tilde{v}|)^{p-2}|A(y)\nabla \tilde{w}_\tau^\Phi|^2 \, d_J(y)dy\\ &&\qquad \leq {\tilde{C}}(1+\tilde{C}) \int_{supp(\varphi_\Phi)}\varphi_\Phi^2 +(|A(y)\nabla \tilde{u}|+|A(y)\nabla \tilde{v}|)^{p-2}|A(y)\nabla\varphi_\Phi|^2\ d_J(y) dy \leq \mathcal{C}, 
 \end{eqnarray}
where the constant $\mathcal{C}$ does not depend on $\tau$. As shown in \cite{MeMoSc, Mo}, we can assume that $\tilde{w}_\tau(=\log(w_\tau))$ has zero mean in $co(\Phi^{-1}(B_C(\bar{x},5\delta)))$. Indeed, for any $k>0$ we can rewrite \eqref{unavolta} to obtain the same \eqref{unavolta2} using $\log(w_\tau/k)$ instead of $\log(w_\tau)$. From \eqref{unavolta2} to \eqref{risol} this change does not introduce modification. Nevertheless, choosing 
$$
 k=\exp\left(|co(\Phi^{-1}(B_C(\bar{x},5\delta)))|^{-1}\int_{co(\Phi^{-1}(B_C(\bar{x},5\delta)))}log({w}^\Phi_\tau) dx\right)
$$
we easily get that $\log(w_\tau/k)$ has zero mean on $co(\Phi^{-1}(B_C(\bar{x},5\delta)))$ and this constant have not modified the computations \eqref{unavolta}-\eqref{risol}.\\
In this setting, we are allowed to expoit the weighted Poincar\'e-Sobolev type inequality; 
\begin{eqnarray*}
&& \int_{co(\Phi^{-1}(B_C(\bar{x},5\delta)))}| \tilde{w}_\tau^\Phi|^\nu \ dy\leq C_S \int_{co(\Phi^{-1}(B_C(\bar{x},5\delta)))} (|A(y)\nabla \tilde{u}|+|A(y)\nabla \tilde{v}|)^{p-2}|\nabla \tilde{w}_\tau^\Phi|^2\, dy\\
&&\qquad\leq C_S\|J_\Phi^T\|_\infty\int_{co(\Phi^{-1}(B_C(\bar{x},5\delta)))} (|A(y)\nabla \tilde{u}|+|A(y)\nabla \tilde{v}|)^{p-2}|A(y)\nabla \tilde{w}_\tau^\Phi|^2\, dy\\
&&\qquad\leq \frac{C_S\|J_\Phi^T\|_\infty}{\inf|d_J|}\int_{co(\Phi^{-1}(B_C(\bar{x},5\delta)))} (|A(y)\nabla \tilde{u}|+|A(y)\nabla \tilde{v}|)^{p-2}|A(y)\nabla \tilde{w}_\tau^\Phi|^2\,d_J(y) dy\\
&&\qquad\leq \mathcal{C},
\end{eqnarray*}
where the constant $\mathcal{C}$ does not depend on $\tau$.
Then, we are able to conclude that
$$
 \|\tilde{w}_\tau\|_{L^\nu(B_C(\bar{x},5\delta))}\leq \mathcal{C},
$$ 
and the constant does not depend on $\tau$.\\
Let us now define the test function
$$
 \phi_\tau:=\varphi^2\frac{(|\tilde{w}_\tau|^\beta+(2\beta)^\beta)}{w_\tau},
$$
where $\beta\geq 1$, $\varphi\in C_0^1(B(\bar{x},5\delta))$ and $\varphi$ is nonnegative. Using $\phi_\tau$ as test function in the weak formulations solved by $u$ and $v$ and then subtracting, we get
\begin{eqnarray}\label{a3}
   &&\int_{C_+} 2\varphi\frac{(|\tilde{w}_\tau|^\beta+(2\beta)^\beta)}{w_\tau} (|\nabla v|^{p-2}\nabla v-|\nabla u|^{p-2}\nabla u)\cdot\nabla \varphi \ dx\\
\nonumber &&\qquad+\int_{C_+} (|\nabla v|^{p-2}\nabla v-|\nabla u|^{p-2}\nabla u)\cdot\nabla w_\tau \ \frac{\varphi^2}{w_\tau^2}(\beta \ sign(\tilde{w}_\tau) |\tilde{w}_\tau|^{\beta-1}-|\tilde{w}_\tau|^\beta-(2\beta)^\beta)\, dx\\
\nonumber &&\quad\geq\int_{C_+} (f(v)-f(u))\varphi^2\frac{(|\tilde{w}_\tau|^\beta+(2\beta)^\beta)}{w_\tau} \, dx.  
\end{eqnarray}
Following \cite[Equations (3.42)-(3.44)]{MeMoSc}, we can estimate the second integral on the l.h.s. of \eqref{a3} as follows
\begin{eqnarray*}
 &&\int_{C_+} (|\nabla v|^{p-2}\nabla v-|\nabla u|^{p-2}\nabla u)\cdot\nabla w_\tau \ \frac{\varphi^2}{w_\tau^2}(\beta \ sign(\tilde{w}_\tau) |\tilde{w}_\tau|^{\beta-1}-|\tilde{w}_\tau|^\beta-(2\beta)^\beta) \, dx\\
 &&\qquad\leq-\beta \int_{C_+}(|\nabla v|+|\nabla u|)^{p-2}\varphi^2|\nabla \tilde{w}_\tau|^2|\tilde{w}_\tau|^{\beta-1} \, dx.
  \end{eqnarray*}
For the first integral on the l.h.s of \eqref{a3}, using \cite[Equation (2-1)]{D}, we have
\begin{eqnarray*}
 &&\int_{C_+} 2\varphi\frac{(|\tilde{w}_\tau|^\beta+(2\beta)^\beta)}{w_\tau} (|\nabla v|^{p-2}\nabla v-|\nabla u|^{p-2}\nabla u)\cdot\nabla \varphi \ dx\\
 &&\qquad\leq \tilde{C}\int_{C_+}(|\nabla u|+|\nabla v|)^{p-2} \ \frac{\varphi}{w_\tau}(|\tilde{w}_\tau|^\beta+(2\beta)^\beta)|\nabla \varphi||\nabla w_\tau| \, dx,
   \end{eqnarray*}
 for a suitable constant $\tilde{C}>0$. Using the Young inequality, we obtain 
   \begin{eqnarray*}
&& \tilde{C}\int_{C_+}(|\nabla u|+|\nabla v|)^{p-2} \ \frac{\varphi}{w_\tau}(|\tilde{w}_\tau|^\beta+(2\beta)^\beta)|\nabla \varphi||\nabla w_\tau| \, dx\\
&&\quad\leq \frac{\theta\tilde{C}}{2}\int_{C_+}(|\nabla u|+|\nabla v|)^{p-2}\varphi^2|\nabla \tilde{w}_\tau|^2|\tilde{w}_\tau|^{\beta-1}\, dx+\frac{\tilde{C}}{2\theta} \int_{C_+}(|\nabla u|+|\nabla v|)^{p-2}|\nabla \varphi|^2|\tilde{w}_\tau|^{\beta+1} \, dx\\
&&\quad\qquad+\frac{\tilde{C}}{2}\int_{C_+}(|\nabla u|+|\nabla v|)^{p-2}|\nabla \varphi|^2|(2\beta)^{\beta}\, dx+\frac{\tilde{C}}{2}\int_{C_+}(|\nabla u|+|\nabla v|)^{p-2}\varphi^2|\nabla \tilde{w}_\tau|^2(2\beta)^{\beta} \, dx. 
\end{eqnarray*} 
Taking into account the above inequality and \eqref{a3}, we have
\begin{eqnarray*}
&&\int_{C_+} (f(v)-f(u))\varphi^2\frac{(|\tilde{w}_\tau|^\beta+(2\beta)^\beta)}{w_\tau} \, dx\leq -\beta \int_{C_+}(|\nabla v|+|\nabla u|)^{p-2}\varphi^2|\nabla \tilde{w}_\tau|^2|\tilde{w}_\tau|^{\beta-1}\, dx\\
\nonumber &&\quad+\frac{\tilde C \theta}{2}\int_{C_+}(|\nabla u|+|\nabla v|)^{p-2}\varphi^2|\nabla \tilde{w}_\tau|^2|\tilde{w}_\tau|^{\beta-1}\, dx+\frac{\tilde C}{2\theta}\int_{C_+}(|\nabla u|+|\nabla v|)^{p-2}|\nabla \varphi|^2|\tilde{w}_\tau|^{\beta+1}\, dx\\
\nonumber &&\quad+\frac{\tilde{C}}{2}\int_{C_+}(|\nabla u|+|\nabla v|)^{p-2}|\nabla \varphi|^2|(2\beta)^{\beta}\, dx+\frac{\tilde{C}}{2}\int_{C_+}(|\nabla u|+|\nabla v|)^{p-2}\varphi^2|\nabla \tilde{w}_\tau|^2(2\beta)^{\beta}\, dx, 
\end{eqnarray*}
and then
\begin{eqnarray*}
&&\beta \int_{C_+}(|\nabla v|+|\nabla u|)^{p-2}\varphi^2|\nabla \tilde{w}_\tau|^2|\tilde{w}_\tau|^{\beta-1}\, dx\leq \int_{C_+} |f(v)-f(u)|\varphi^2\frac{(|\tilde{w}_\tau|^\beta+(2\beta)^\beta)}{w_\tau}\, dx\\
\nonumber &&\quad+\frac{\theta\tilde{C}}{2}\int_{C_+}(|\nabla u|+|\nabla v|)^{p-2}\varphi^2|\nabla \tilde{w}_\tau|^2|\tilde{w}_\tau|^{\beta-1}\, dx+\frac{\tilde{C}}{2\theta}\int_{C_+}(|\nabla u|+|\nabla v|)^{p-2}|\nabla \varphi|^2|\tilde{w}_\tau|^{\beta+1}\, dx\\
\nonumber &&\quad+\frac{\tilde{C}}{2}\int_{C_+}(|\nabla u|+|\nabla v|)^{p-2}|\nabla \varphi|^2|(2\beta)^{\beta}\, dx+\frac{\tilde{C}}{2}\int_{C_+}(|\nabla u|+|\nabla v|)^{p-2}\varphi^2|\nabla \tilde{w}_\tau|^2(2\beta)^{\beta}\, dx.
\end{eqnarray*}
Regarding the integral involving the source term $f$, by the local Lipschitz continuity, we have
\begin{eqnarray*}
  \int_{C_+} |f(v)-f(u)|\varphi^2\frac{(|\tilde{w}_\tau|^\beta+(2\beta)^\beta)}{w_\tau}\, dx \leq L  \int_{C_+} \varphi^2(|\tilde{w}_\tau|^\beta+(2\beta)^\beta)\, dx. 
   \end{eqnarray*}  
 Collecting the above inequalities, taking $\theta$ small enough, we get
 \begin{eqnarray*}
  &&\beta \int_{C_+}(|\nabla u|+|\nabla v|)^{p-2}\varphi^2|\nabla \tilde{w}_\tau|^2|\tilde{w}_\tau|^{\beta-1}\, dx\leq L  \int_{C_+} \varphi^2(|\tilde{w}_\tau|^\beta+(2\beta)^\beta) \, dx\\
  &&\qquad+ \tilde{C}\int_{C_+}(|\nabla u|+|\nabla v|)^{p-2}|\nabla \varphi|^2|((2\beta)^{\beta}+|\tilde{w}_\tau|^{\beta+1})\, dx\\
  &&\qquad+\frac{\tilde{C}}{2}\int_{C_+}(|\nabla u|+|\nabla v|)^{p-2}\varphi^2|\nabla \tilde{w}_\tau|^2(2\beta)^{\beta}\, dx.
   \end{eqnarray*}
Recalling that $\varphi\in C_0^1(B(\bar{x},5\delta))$ and \eqref{risol} holds, we obtain the following estimate
$$
 \frac{\tilde{C}}{2}\int_{C_+}(|\nabla u|+|\nabla v|)^{p-2}\varphi^2|\nabla \tilde{w}_\tau|^2(2\beta)^{\beta}\, dx \leq (2\beta)^\beta \tilde{C}.
 $$
 Therefore, we deduce
 \begin{eqnarray*}
  &&\beta \int_{C_+}(|\nabla u|+|\nabla v|)^{p-2}\varphi^2|\nabla \tilde{w}_\tau|^2|\tilde{w}_\tau|^{\beta-1}\, dx\leq L  \int_{C_+} \varphi^2(|\tilde{w}_\tau|^\beta+(2\beta)^\beta)\, dx \\
  &&\qquad+ \tilde{C}\int_{C_+}(|\nabla u|+|\nabla v|)^{p-2}|\nabla \varphi|^2((2\beta)^{\beta}+|\tilde{w}_\tau|^{\beta+1})\, dx\\
  &&\qquad +(2\beta)^\beta \tilde{C}. 
   \end{eqnarray*} 
Exploiting the fact that the support of $\varphi$ depends on $\delta$, we have
\begin{equation*}
    (2\beta)^\beta = C(\delta) \int_{B(\bar x,5\delta)} \varphi^2(2\beta)^\beta \, dx.
\end{equation*}
Thus, we can rewrite the previous inequality as follows
 \begin{eqnarray*}
  &&\beta \int_{C_+}(|\nabla u|+|\nabla v|)^{p-2}\varphi^2|\nabla \tilde{w}_\tau|^2|\tilde{w}_\tau|^{\beta-1}\, dx\leq \bar{C}  \int_{C_+} \varphi^2(|\tilde{w}_\tau|^\beta+(2\beta)^\beta) \, dx\\
  &&\qquad+ \tilde{C}\int_{C_+}(|\nabla u|+|\nabla v|)^{p-2}|\nabla \varphi|^2((2\beta)^{\beta}+|\tilde{w}_\tau|^{\beta+1})\, dx,
   \end{eqnarray*} 
where $\bar{C}=\bar C(L,\beta,\delta)$ is a positive constant. Using \cite[Equation (3.42)]{MeMoSc}, we deduce that
$$
 \int_{C_+} \varphi^2(|\tilde{w}_\tau|^\beta+(2\beta)^\beta)\, dx \leq  c(\beta)\int_{C_+} \varphi^2(|\tilde{w}_\tau|^{\beta+1}+(2\beta)^\beta)\, dx.  
 $$
 Hence, we conclude that
 \begin{eqnarray*}
  &&\beta \int_{C_+}(|\nabla u|+|\nabla v|)^{p-2}\varphi^2|\nabla \tilde{w}_\tau|^2|\tilde{w}_\tau|^{\beta-1}\, dx\\
  &&\qquad\leq  \tilde{C}\int_{C_+}(\varphi^2+(|\nabla u|+|\nabla v|)^{p-2}|\nabla \varphi|^2)((2\beta)^{\beta}+|\tilde{w}_\tau|^{\beta+1})\, dx.
   \end{eqnarray*} 
 Let us remark that the previous inequality is very similar to \eqref{app1}, except for the term $(2\beta)^\beta$. Hence, in a similar way we can prove that
 $$
 \phi\left(r_0,\frac52\delta,w_\tau\right)\leq C \phi\left(-r_0,\frac52\delta,w_\tau\right),
 $$ 
 that is Stp(ii).\\ 
 The previous estimate, together with
 $$
 \phi(-\infty,\delta,w_\tau)\geq C\phi\left(-r_0,\frac52\delta,w_\tau\right),
$$
proved in Stp(i), allows us to conclude that
$$
 \phi(-\infty,\delta,w_\tau)\geq C \phi\left(r_0,\frac52\delta,w_\tau\right).
 $$  
Fix $s\in(0,r_0]$, the proof concludes by \cite[Equations (3.28)-(3.29)]{MeMoSc}.
If instead, $s\in (r_0, \frac{\bar{ 2}_p}{2})$, then we get the thesis by \cite[Equations (3.31)-(3.35)]{MeMoSc}.
\end{proof}
 
The consequence of the weak Harnack comparison inequality, previously proved, is the following strong comparison principle on the lateral surface of the cylinder.
 
 \begin{theorem}\label{thm:strongggggggg}
Let $p>2$, assume that
$f(u)$ fulfills $(H)$ and $N\geq 2$.  Let $u\in C^{1,\alpha}_{loc}(\overline{C}_+)$ be a weak solution of \eqref{main_problem}. Let $v\in C^{1,\alpha}_{loc}(\overline{C}_\sigma)$ satisfying
\begin{equation*}
    - \Delta_p u - f(u) \leq - \Delta_p v - f(v), \quad u \leq v \quad \text{in} \ C_\sigma, \mbox{ for some }\sigma>0,
\end{equation*}
in the weak distributional meaning. Then for any connected component $\mathcal{C}_i$ of $\partial\Omega\times (0,\sigma)$ 
it follows that
$$u<v \quad \text{in} \quad\mathcal{C}_i,$$
unless $u\equiv v$ in $\mathcal{C}_i$.
\end{theorem}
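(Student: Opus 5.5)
The plan is the standard open--closed connectedness argument on each component $\mathcal{C}_i$, with the local weak Harnack comparison inequality of Theorem \ref{WHarna} as the only analytic input. Fix a connected component $\mathcal{C}_i$ of $\partial\Omega\times(0,\sigma)$. Since $u,v\in C^{1,\alpha}_{loc}$, both functions are continuous up to the lateral boundary. Consider the set
\[
Z:=\{\bar x\in \mathcal{C}_i : u(\bar x)=v(\bar x)\}.
\]
By continuity, $Z$ is relatively closed in $\mathcal{C}_i$. If $Z=\emptyset$, then $u<v$ on $\mathcal{C}_i$, because $u\le v$ in $C_\sigma$ and hence also on its lateral boundary by continuity. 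So it suffices to show that $Z$ is relatively open. Connectedness of $\mathcal{C}_i$ then forces $Z=\mathcal{C}_i$, that is, $u\equiv v$ on $\mathcal{C}_i$.

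To prove openness, take $\bar x\in Z$ and choose $\delta>0$ so small that $7\delta<d(\bar x,\partial\Omega\times\{0\})$. Shrinking $\delta$ further if necessary, we also want $B(\bar x,7\delta)$ to avoid $\partial\Omega\times\{\sigma\}$, so that $B_C(\bar x,7\delta)\subset C_\sigma$, and to lie inside a ball where the unique nearest point property and the Fermi coordinates used in the proof of Theorem \ref{WHarna} are available. In $B_C(\bar x,7\delta)$ we then have $-\Delta_p u-f(u)\le -\Delta_p v-f(v)$ and $u\le v$, which are exactly the hypotheses of Theorem \ref{WHarna}. Applying it with some fixed $s\in(0,\bar 2_p/2)$ gives
\[
\|v-u\|_{L^s(B_C(\bar x,2\delta))}\le \tilde C\inf_{B_C(\bar x,\delta)}(v-u).
\]
The infimum is taken over $B_C(\bar x,\delta)$, which is open in $C_+$. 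By continuity of $v-u$ up to the lateral boundary, this infimum is $\le (v-u)(\bar x)=0$. Since $v-u\ge0$, the infimum is actually $0$. Hence $v-u=0$ a.e. in $B_C(\bar x,2\delta)$, and by continuity $v\equiv u$ on $\overline{B_C(\bar x,2\delta)}$. In particular $u=v$ on $B(\bar x,2\delta)\cap\mathcal{C}_i$, which is a relative neighborhood of $\bar x$ in $\mathcal{C}_i$. So $Z$ is open.

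The main obstacle is checking that Theorem \ref{WHarna} applies with constants that are finite at each point, and in the form used above. Concretely, the infimum must be understood through the continuous extension up to $\partial\Omega\times(0,\sigma)$, and the constant $\tilde C$ may depend on $\bar x$ and $\delta$ through the local $L^\infty$ norms of $v$, $\nabla u$, $\nabla v$ on a compact subset of $\overline C_\sigma$ away from the base. This causes no difficulty, since openness is a purely local statement and uniformity in $\bar x$ is never needed.

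Two further points need care. First, Theorem \ref{WHarna} is stated for $u$ a solution and $v$ a supersolution of the comparison inequality, which is exactly the situation here. Second, the weighted Sobolev inequality invoked from \cite{MMS} requires the weight $(|\nabla u|+|\nabla v|)^{p-2}$ to satisfy its integrability condition near $\bar x$. This holds because $u$ solves \eqref{main_problem} with $f(u)>0$ in $C_+$, so $|\nabla u|^{-1}$ has the summability given by \cite{MMS}, locally up to the Neumann boundary. Since $p>2$ and the weight is bounded below by $|\nabla u|^{p-2}$, this lower-bound control is enough.
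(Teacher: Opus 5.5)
Your proposal is correct and follows the paper's own argument. The coincidence set $\{u=v\}\cap\mathcal{C}_i$ is relatively closed by continuity, and it is relatively open because Theorem \ref{WHarna}, applied on a small ball centered at a contact point, forces $v-u\equiv 0$ nearby; connectedness of $\mathcal{C}_i$ then concludes. Your extra bookkeeping only makes explicit what the paper leaves implicit: you choose $\delta$ small so that $B_C(\bar x,7\delta)\subset C_\sigma$ stays away from both the base and the top of $C_\sigma$, and you note that the infimum over $B_C(\bar x,\delta)$ vanishes by continuity at $\bar x$.
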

\begin{proof}
 Denote $\mathcal{N}$ as the (closed) subset of $\partial \Omega\times (0,\sigma)$ where $u=v$. Denote by $C_{\mathcal{N}}^i:=\mathcal{C}_i\cap\mathcal{N}$ that is closed, since it is the intersection of closed sets. Moreover, for each $x\in C_{\mathcal{N}}^i$, by Theorem \ref{WHarna}, there exists a suitable ball $B_C(x,\delta)\subset C_\sigma$ where $u=v$ (by continuity). Then $\overline{B_C(x,\delta)}\cap \mathcal{C}_i\subset \mathcal{N}$ and so $x$ is an interior point w.r.t the topology of the $\partial \Omega\times (0,\sigma)$. Hence, $C_{\mathcal{N}}^i$ is open in $\mathcal{C}_i$. Since $\mathcal{C}_i$ is connected, then either $u=v$ or $u<v$ on $\mathcal{C}_i$.
\end{proof}
\section{Proof of the Monotonicity Result - Theorem \ref{Monotonicity}}\label{monomono}
We first state and prove the following proposition, which provides a preliminary step in the proof of our monotonicity result. Indeed, this proposition provides a useful Hopf Lemma-type result for our problem \eqref{main_problem}. 
\begin{proposition}\label{N_mon}
Let $u \in C^{1,\alpha}_{loc}(\overline{C}_+)$ be a weak solution of \ref{main_problem}. Then, there exists $\tilde{x}_N>0$ such that $\partial_{x_N} u > 0$ in $\overline{C}_{\tilde{x}_N}$.
\end{proposition}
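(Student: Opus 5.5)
The strategy is a Hopf-type argument on the flat Dirichlet base $\Omega\times\{0\}$, made uniform up to the lateral boundary by reflection, followed by a continuity argument.

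\textbf{Step 1: Hopf on the base.} We first show that $\partial_{x_N}u>0$ at every point of $\overline\Omega\times\{0\}$, including the corner $\partial\Omega\times\{0\}$.

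\emph{Interior points.} Since $u>0$ in $C_+$, $u=0$ on the base and $-\Delta_p u=f(u)\ge 0$, the Hopf boundary lemma for the $p$-Laplacian applies. This is the V\'azquez strong maximum principle, which holds for any $p>1$ with a nonnegative right-hand side. The interior ball condition holds at any point $(x_0',0)$ with $x_0'\in\Omega$. Hence $\partial_{x_N}u(x_0',0)>0$.

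\emph{Corner points.} At points of $\partial\Omega\times\{0\}$ the interior ball condition fails, because $C_+$ has an edge there. To handle this, we flatten $\partial\Omega$ locally via the Fermi-coordinate map $\Phi$ introduced in Section~\ref{notations}. The Neumann condition $\partial_\nu u=0$ allows an even reflection across the lateral boundary. The reflected function $\tilde u$ then solves a quasilinear equation of the form
\[
-\mathrm{div}\bigl(|A\nabla \tilde u|^{p-2}A^TA\nabla\tilde u\, d_J\bigr)=f(\tilde u)\,d_J
\]
in a full neighbourhood of the corner point, with smooth, uniformly elliptic coefficients. This equation still has a nonnegative right-hand side and a zero Dirichlet datum on the flat base.

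\emph{Hopf for the reflected equation.} We apply the Hopf lemma for this operator. A radial barrier comparison in an annulus touching the base from inside works for $p$-Laplace-type operators with smooth coefficients. This yields $\partial_{x_N}u>0$ at the corner as well.

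\emph{Main obstacle.} I expect the reflection step to be the hardest part. Since $\Omega$ is only smooth and not flat, the reflected operator does not have constant coefficients. So one must check that the comparison and barrier arguments survive, using the $C^{1,\alpha}$ regularity across the reflection plane. The $C^{1,\alpha}$ regularity of the reflected function and its solving the reflected equation both follow from the paper's assumption that $u\in C^{1,\alpha}_{loc}(\overline C_+)$.

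\textbf{Step 2: uniform lower bound.} The set $\overline\Omega\times\{0\}$ is compact, and $\partial_{x_N}u$ is continuous on $\overline C_+$ because $u\in C^{1,\alpha}_{loc}(\overline C_+)$. Together with Step 1, this gives $\partial_{x_N}u\ge c_0>0$ on $\overline\Omega\times\{0\}$.

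\textbf{Step 3: propagation to a strip.} Uniform continuity of $\nabla u$ on the compact set $\overline C_1$ gives some $\tilde x_N\in(0,1)$ such that $|\partial_{x_N}u(x',x_N)-\partial_{x_N}u(x',0)|<c_0/2$ for all $x'\in\overline\Omega$ and $0\le x_N\le \tilde x_N$. Hence $\partial_{x_N}u>c_0/2>0$ in $\overline C_{\tilde x_N}$, which is the claim.
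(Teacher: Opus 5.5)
Your proposal is correct. At the only delicate point, the corner $\partial\Omega\times\{0\}$, it takes a different route from the paper.

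\textbf{The paper's route.} The paper never reflects. It compares $u$ on the slab $C_{\hat x_N}$ with a scaled one-dimensional function $\varepsilon w(x_N)$, where $w(0)=0$ and $w'(0)>0$. Since $w$ does not depend on $x'$, it satisfies the homogeneous Neumann condition on $\partial\Omega\times(0,\hat x_N)$ for free. Moreover, $(\varepsilon w-u)^+$ vanishes on the base and, for $\varepsilon$ small, on the top face. Testing both equations with it and using the monotonicity of $\xi\mapsto|\xi|^{p-2}\xi$ gives $\varepsilon w\le u$ in the whole slab. Hence $\partial_{x_N}u(x',0)\ge\varepsilon w'(0)$ for all $x'\in\overline\Omega$ at once. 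Because $f\ge 0$, the $p$-harmonic choice $w=x_N$ already works, and the term $kw^{p-1}$ is not needed.

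\textbf{What each route buys.} The paper's global barrier needs only a weak comparison principle, with no change of variables and no Hopf lemma for variable-coefficient operators. It gives the uniform lower bound on the closed base directly, so your Steps 1 and 2 collapse into one. Your route is local and costs the Fermi-coordinate reflection plus a Hopf lemma for $-\mathrm{div}\bigl(d_J|A\nabla v|^{p-2}A^TA\nabla v\bigr)$. In exchange, it does not use the product structure of $C_+$, only that the Dirichlet and Neumann parts of the boundary meet orthogonally.

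\textbf{Details to fix when you write it out.}
\begin{enumerate}
\item The parametrization of the lateral surface must be of product form $(z,x_N)\mapsto(\gamma_0(z),x_N)$. This keeps $x_N$ as one of the new coordinates, so the base stays flat.
\item After even reflection the coefficients are only Lipschitz, not smooth. The metric contains the term $-2s\mathcal{B}_{ij}$, which is linear in the normal variable $s$. The coefficients are continuous across $\{s=0\}$ only because the mixed entries $g_{is}$ vanish in Fermi coordinates. Lipschitz coefficients are still enough for the exponential barrier computation.
\item The comparison on the inner sphere of the annulus needs $u>0$ on the lateral boundary. You should derive this from the strong maximum principle for the reflected supersolution. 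The paper tacitly uses the same fact on the top face.
\end{enumerate}
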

\begin{proof}
Notice that, since $u=0$ on $\Omega \times \{0\}$, it follows via standard Hopf Lemma that $\partial_{x_N} u(x',0) > 0$, for any $x' \in \Omega$.\ \\
If $x' \in \partial \Omega$, the proof is more involved and requires two steps.
We divide the proof accordingly.\\
We can prove, though the argument is purely technical and therefore omitted, that there exists, for some $\hat t > 0$ small, a positive solution $w$, in $(0,\hat t)$, of the one dimensional problem
\begin{equation}\label{1D}
\begin{cases}
(|w'|^{p-2}w')' = k w^{p-1} & \text{in } (0,\hat t),\\
w(0)=0, \quad \eta_0=w'(0)>0,
\end{cases}
\end{equation}
where $k$ is a positive constant.
The second step consists in comparing the solution $w$ of the one-dimensional problem with the solution $u$ of \eqref{main_problem} in $C_{\hat{x}_N}$ for a suitable $\hat x_N>0$. \ \\
In what follows, we will consider $w$ as a function defined in $\mathbb{R}^N$ depending only on the $x_N$ variable, namely
\begin{equation*}
w : \mathbb{R}^N \rightarrow \mathbb{R}^+: \ w(x_1,x_2,...,x_N) = w(x_N).
\end{equation*}
Note that the boundary of $C_{\hat{x}_N}$ can be divided in three parts, namely:
\begin{equation*}
\partial C_{\hat{x}_N} = \overline{\Omega}_0 \cup \overline{\Omega}_{\hat{x}_N} \cup A_1,
\end{equation*}
where,
\begin{equation*}
 \Omega_0=\Omega\times\{0\},\qquad \Omega_{\hat{x}_N} := \Omega \times \{\hat{x}_N\}, \quad A_1 := \partial \Omega \times (0,\hat{x}_N).
\end{equation*}
In $\overline{\Omega}_0$,  by the Dirichlet condition on \eqref{main_problem},  we have $u=0$ and $w=0$.\ \\
In $A_1$,  recalling \eqref{main_problem},  we have $\partial_{\nu} u =0$ and $\partial_{\nu} w = 0$,   since $w$ depends only on $x_N$. \ \\
In $\overline{\Omega}_{\hat{x}_N}$,  since the function $\tilde w$, defined as follows:
\begin{equation*}
\tilde w : \mathbb{R}^N \rightarrow \mathbb{R}^+, \ \ \tilde w :=\varepsilon w \ \text{for some } \varepsilon > 0,
\end{equation*}
is still a solution to \eqref{1D},  we can choose $\varepsilon>0$ sufficiently small so that $\tilde w < u$ in $\overline{\Omega}_{\hat{x}_N}$.\ \\
Thus, by a density argument, we can use $(\tilde w-u)^+$ as test function,  both in \eqref{main_problem} and \eqref{1D}, integrating over $C_{\hat{x}_N}$ .\ \\
For simplicity,  we shall continue to denote by $w$ the function $\tilde{w}$. Testing problem \eqref{1D} with $(w - u)^+$, we obtain
\begin{equation}\label{weak_1D}
\int_{[w \geq u]} |\nabla w|^{p-2} \nabla w \cdot \nabla (w-u)\, dx + k \int_{[w \geq u]} w^{p-1}(w-u)\, dx=0.
\end{equation}
On the other hand, recalling the positivity of $f$, integrating by parts \eqref{main_problem} with $(w-u)^+$, we get
\begin{equation}\label{weak_main}
\int_{[w \geq u]} |\nabla u|^{p-2} \nabla u \cdot \nabla(w-u) \ dx +k \int_{[w \geq u]} u^{p-1} (w-u) \ dx \geq 0.
\end{equation}
Subtracting \eqref{weak_main} from \eqref{weak_1D}, we have
\begin{equation*}
\int_{[w \geq u]} (|\nabla w|^{p-2} \nabla w - |\nabla u|^{p-2} \nabla u) \cdot \nabla (w-u) \ dx + k \int_{[w \geq u]}( w^{p-1}- u^{p-1} )(w-u) \ dx \leq 0.
\end{equation*}
Since the function $s^{p-1}$ is non-decreasing for $s \geq 0$,  the last integral is non-negative,  thus it follows that
\begin{equation*}
\int_{[w \geq u]} (|\nabla w|^{p-2} \nabla w - |\nabla u|^{p-2} \nabla u) \cdot \nabla (w-u) \ dx \leq 0.
\end{equation*}
By a standard inequality, see \cite{D,LI}, we obtain
\begin{equation*}
\int_{C_{\hat{x}_N}} (|\nabla w|+|\nabla u|)^{p-2} |\nabla(w-u)^+|^2 \ dx \leq 0.
\end{equation*}
This yields a contradiction unless $\nabla (w-u)^+ = 0$.  Since $w=u=0$ on $\Omega_0$ we conclude that 
\begin{equation*}
 w \leq u \quad \text{in } C_{\hat{x}_N}.
\end{equation*}
The continuity of $u$ and $w$ also implies that $w\leq u$ on $\overline{C}_{\hat{x}_N}$.
Therefore, the assumption on the problem \eqref{1D}, $w'(0)>0$, then implies $\partial_{x_N} u(x) \geq w_{x_N}(0)= w'(0) > 0$, on $\partial \Omega\times [0,\hat{x}_N)$. Thus, by continuity, there exists $\tilde{x}_N > 0$ small enough such that $\partial_{x_N} u >0$ in $\overline C_{\tilde{x}_N }$.
\end{proof}\ \\
\noindent We denote by $u^\theta$ the reflection of $u$, w.r.t. the hyperplane $\{x_N = \theta \}$, defined by 
$$
 u^\theta(x',x_N):=u(x',2\theta-x_N).
$$
\begin{proof}[Proof of Theorem \ref{Monotonicity}]
As a consequence of Proposition \ref{N_mon}, there exists a suitable $\theta'>0$ such that we have $u < u^{\theta'}$ in $ \overline \Omega \times [0,\theta')$.
Let us now define:
\begin{equation*}
\bar \lambda := \sup\{\theta' > 0 : u \leq u^\theta \ \text{in } C_{\theta}, \ \forall \theta \in (0,\theta')\}.
\end{equation*}
If $\bar \lambda=+ \infty$ we get the thesis; therefore let us suppose, by contraddiction, that $\bar \lambda<+ \infty$. This would imply that 
\begin{equation*}
u \leq u^{\bar \lambda} \quad \text{in } \ C_{\bar \lambda}=  \Omega \times (0, \bar \lambda].
\end{equation*}
\begin{remark}\label{rem1}
For the reader’s convenience, we would like to emphasize that the cases $1<p\leq2$ and $p>2$ are fundamentally different. In particular, in the former case, the moving plane method relies on a Poincaré inequality for functions with homogeneous Dirichlet data on a subset of the boundary of the domain under consideration. The latter case presents an additional difficulty since, as will become clear from the proof, the Poincaré inequality must be replaced by a suitable weighted version. To the best of our knowledge, no Poincaré inequalities, either standard or weighted, currently available in the literature are adequate for our purposes.
\end{remark}

\subsection{The case \texorpdfstring{$p \leq 2$}{p < = 2}}
\
We prove here the monotonicity result in the case $p\leq2$. 
We shall consider  separately the cases $\frac{2N+2}{N+2} < p \leq2$ and $1<p\leq\frac{2N+2}{N+2}$. In the case $\frac{2N+2}{N+2} < p \leq2$ we may benefit of a strong comparison principle so that the proof is easier. On the contrary some extra work is necessary to deal with the case $1<p\leq\frac{2N+2}{N+2}$. Although it would be possible to provide a proof that consider directly the two cases jointly, we prefer to keep both the two different approaches since these are interesting and of possible use in future developments.

\subsubsection{The case \texorpdfstring{$\frac{2N+2}{N+2} < p \leq2$}{2N+2/N+2 < p <= 2}}\hfill\\

By the strong comparison principle, see \cite[Theorem $3.7$]{D-S},  since $u=0$ and $u^{\bar \lambda} > 0$ in $\Omega_0$, we deduce that:
\begin{equation}\label{strict}
u < u^{\bar \lambda} \quad \text{in } \  C_{\bar \lambda}.
\end{equation}
Let us now consider a compact subset of $C_{\bar \lambda}$, namely $K := \Omega' \times [t_1,t_2]$, with $t_1 > 0$ small; $t_2 < \bar \lambda$ such that the difference $(\bar \lambda + \varepsilon) - t_2$ is small enough, and $\Omega' \subset \subset \Omega$ big enough. \ \\
We claim that $u < u^{\bar \lambda +\varepsilon}$,  in $K$ for some $\varepsilon >0$ small. Indeed, since $u^{\bar \lambda} - u \geq \delta > 0$ in $K$ by \eqref{strict}, by the uniform continuity, we infer that we can choose $\varepsilon>0$ such that:
\begin{equation}\label{Cont_comp}
u^{\bar \lambda + \varepsilon} - u = u^{\bar \lambda + \varepsilon} - u^{\bar \lambda} + u^{\bar \lambda} - u \geq \frac{\delta}{2} > 0 \quad \text{in } \ K.
\end{equation}
The remaining part consists in studying the solution $u$ in 
\begin{equation*}
D^*:=C_{\bar \lambda+\varepsilon} \setminus K.
\end{equation*}
The boundary of $D^*$ can be written as follows:
\begin{equation*}
\partial D^* = \overline \Omega_0\cup\overline \Omega_{\bar \lambda +\varepsilon} \cup \partial K \cup ( \partial \Omega \times (0,\bar \lambda + \varepsilon)).
\end{equation*}
By \eqref{Cont_comp}, $u \leq u^{\bar \lambda + \varepsilon}$ on $\partial K$.\\ 
In $\overline \Omega_{\bar \lambda + \varepsilon}$, by the definition of $u^{\bar \lambda + \varepsilon}$, it follows that $u=u^{\bar \lambda + \varepsilon}$.\\
In $\overline \Omega_0$ recall that $0=u<u^{\bar\lambda +\varepsilon}$ since for every $x\in\overline\Omega_0$,  $u^{\bar\lambda +\varepsilon}(x)>0$.\\
On $\partial \Omega \times (0,\bar \lambda + \varepsilon)$ we have the Neumann homogeneous boundary conditions, namely $\partial_{\nu}u = \partial_{\nu} u^{\bar\lambda + \varepsilon}=0$.\\
Our goal consists in deducing a weak comparison principle in $D^*$. To this aim, we note that, the reflected function $u^{\bar \lambda + \varepsilon}$ solves:
\begin{equation*}
\begin{cases}
- \Delta_p u^{\bar \lambda + \varepsilon} = f(u^{\bar\lambda + \varepsilon}) & \text{in } C_{\bar \lambda+\varepsilon},\\
\partial_{\nu} u^{\bar \lambda + \varepsilon} = 0 & \text{on } \partial \Omega \times (0,\bar \lambda + \varepsilon),
\end{cases}
\end{equation*}
and $u \leq u^{\bar \lambda + \varepsilon}$ on $\overline \Omega_0 \cup \overline \Omega_{\bar \lambda + \varepsilon} \cup \partial K$. 
Then the function $(u-u^{\bar \lambda+ \varepsilon})^+ \in W^{1,p}_D(C_{\bar \lambda+\varepsilon})$, where $D=K\cup \overline{\Omega}_0 \cup \overline{\Omega}_{\bar \lambda + \varepsilon}$, (see \eqref{test}). \\
Hence, we have
\begin{equation*}
\int_{D^*} |\nabla u|^{p-2} \nabla u \cdot \nabla (u - u^{\bar \lambda + \varepsilon})^+ \ dx = \int_{D^*} f(u) (u- u^{\bar \lambda + \varepsilon})^+ \ dx,
\end{equation*}
and
\begin{equation*}
\int_{D^*} |\nabla u^{\bar \lambda+ \varepsilon}|^{p-2} \nabla u^{\bar\lambda + \varepsilon} \cdot \nabla (u - u^{\bar\lambda + \varepsilon})^+ \ dx = \int_{D^*}f(u^{\bar \lambda+ \varepsilon}) (u- u^{\bar \lambda + \varepsilon})^+ \ dx.
\end{equation*}
Subtracting the two equations, 
\begin{equation*}
\begin{split}
\int_{D^*} (|\nabla u|^{p-2} \nabla u - |\nabla u^{\bar \lambda + \varepsilon}|^{p-2} \nabla u^{\bar \lambda + \varepsilon}) & \cdot \nabla (u - u^{\bar \lambda + \varepsilon})^+ \ dx\\
&= \int_{D^*} (f(u) - f(u^{\bar \lambda + \varepsilon}))(u- u^{\bar \lambda+ \varepsilon})^+ \ dx.
\end{split}
\end{equation*}
In addition, since $\nabla u$ and $\nabla u^{\bar \lambda + \varepsilon}$ are bounded in $D$, we have
\begin{equation}\label{Classic_I}
\begin{split}
\int_{D^*} (|\nabla u| + |\nabla u^{\bar \lambda + \varepsilon}|)^{p-2} & |\nabla (u - u^{\bar \lambda + \varepsilon})^+|^2 \ dx\\
&\geq C(p,\| \nabla u \|_{L^\infty(D)},\| \nabla u^{\bar \lambda + \varepsilon} \|_{L^\infty(D)}) \int_{D^*}  |\nabla (u - u^{\bar \lambda + \varepsilon})^+|^2 \ dx.
\end{split}
\end{equation}
By the Lipschitz continuity of $f$ and by \eqref{Classic_I}, it follows that
\begin{equation}\label{Pre_F}
\int_D |\nabla (u - u^{\bar \lambda + \varepsilon})^+|^2 \ dx\\
\leq C \int_D  |(u - u^{\bar \lambda + \varepsilon})^+|^2 \ dx,
\end{equation}
where $C=C(p,L,\|u\|_{\infty},\|\nabla u\|_\infty,\|\nabla u^{\bar \lambda + \varepsilon}\|_\infty)$ is a postive constant.\\
For the sake of simplicity, see the Figure \ref{fig:rettangolo-lambda}, let us consider the set $D^*$ as $D^*=D_1\cup D_2\cup D_3$, where
\begin{equation*}
    D_1 := \Omega \times (0,t_1), \quad D_2 := \Omega \times [t_1,t_2] \setminus K, \quad D_3 := \Omega \times (t_2,\bar \lambda + \varepsilon),
\end{equation*}
so that,
\begin{equation}\label{3pezzi}
\begin{split}
 \int_{D^*}  &|(u - u^{\bar \lambda + \varepsilon})^+|^2 \ dx \\
 &\qquad=  \int_{D_1}  |(u - u^{\bar \lambda + \varepsilon})^+|^2 \ dx +   \int_{D_2}  |(u - u^{\bar \lambda + \varepsilon})^+|^2 \ dx +  \int_{D_3}  |(u - u^{\bar \lambda + \varepsilon})^+|^2 \ dx\\
 &\qquad=: I_1+I_2+I_3.
\end{split}
\end{equation}
\begin{figure}[ht]
\centering
\begin{tikzpicture}[scale=1.2]

\draw[thick] (0,0) rectangle (3,4);
\draw[thick] (0.5,0.4) rectangle (2.5,3.2);

\draw[thick] (0,0) -- (3,0);        
\draw[thick] (0,4) -- (3,4);
\draw[thick] (0,3.6) -- (3,3.6); 

\node[left] at (0,0) {$0$};
\node[left] at (0,4) {$\bar{\lambda}+\varepsilon$};
\node[left] at (0,3.6) {$\bar{\lambda}$};
\node[left] at (0,3.2) {$t_2$};
\node[left] at (0,0.4) {$t_1$};
\node at (1.5,1.8) {$K$};
\node at (3.5,4.2) {$C_{\bar \lambda + \varepsilon}$};
\node at (1.5,3.5) {$D_3$};
\node at (1.5,0.2) {$D_1$};
\node at (0.25,1.8) {$D_2$};
\node at (2.75,1.8) {$D_2$};
\end{tikzpicture}
\caption{}
\label{fig:rettangolo-lambda}
\end{figure}\ \\
\ \\
First of all, we deal with $I_2$. Recalling the definition of $K$, then $D_2= (\Omega\setminus \Omega') \times [t_1,t_2]$, where $\Omega' \subset \subset \Omega$.
Taking into account Section 2, where we introduced the flattening operator, for a fixed $\bar x\in \partial \Omega\times[t_1,t_2]$, and for $r,\delta > 0$, we set
\begin{equation}\label{pallette}
    B_{\delta}^+ : =\{x\in \Omega \times [t_1,t_2]: d(x,\partial \Omega \times [t_1,t_2]) < \delta\} \cap B_r(\bar x).
\end{equation}
Let $\Phi : \mathcal{B}_{ft}^+ \rightarrow B_{\delta}^+$ the flattening operator. 
Let $\delta_0 > 0$ be such that $B_{\delta_0}^+$ has the unique nearest point property. Let $\Omega'$ be chosen sufficiently large so that, denoting by $\bar\delta := dist(\Omega,\Omega')$, we have $\bar\delta \leq \delta_0$.

Note that the compactness of $\partial \Omega \times [t_1,t_2]$ allows us to choose $\delta_0$ uniformly with respect to $\bar x \in \partial \Omega \times [t_1,t_2]$.

For clarity let $w(x):= (u(x) - u^{\bar \lambda + \varepsilon}(x))^+$, for all $x \in \Omega \times [t_1,t_2]$. Using the flattening operator previously defined, we have
\begin{equation}\label{eq_FLAT}
    \int_{B_{\bar \delta}^+} |w(x)|^2 \ dx = \int_{\mathcal{B}_{ft}^+} |w(\Phi(y))|^2 \cdot |\operatorname{det} J_\Phi(y)|\ dy,
\end{equation}
where $J_\Phi$ is the Jacobian matrix of $\Phi$, namely
\begin{equation}\label{J_PHI}
    J_\phi = \left[\partial_{y_1} \gamma + y_N \partial_{y_1} \eta,...,\partial_{y_{N-1}} \gamma+  y_N \partial_{y_{N-1}} \eta, \eta\right].
\end{equation}
For the reader’s convenience, we now provide an estimate of the determinant of the change of variables defined by Fermi coordinates; this type of estimate will be used several times in the text without being made explicit or directly referenced.
Note that $\partial_{y_i} \eta$ can be rewritten in terms of the second fundamental form associated to the parametrization $\bar x = \gamma (y_1,...,y_{N-1})$:
\begin{equation*}
    \partial_{y_i} \eta = - \sum_{j=1}^{N-1} \mathcal{B}_{i}^j \partial_{y_j} \gamma,
\end{equation*}
where $\mathcal{B}_{i}^j = \sum_{k=1}^{N-1} g^{jk} \mathcal{B}_{ki}$ represents the Weingarten map associated to the second fundamental form $\mathcal{B}_{ik}$ and $g^{jk}$ stands for the inverse matrix of $g_{jk}=\partial_{y_j}\gamma \cdot \partial_{y_k}\gamma$. Therefore, we have the following identity:
\begin{equation*}
    \partial_{y_i} \Phi = \sum_{j=1}^{N-1}(\delta_{ij} - y_N \mathcal{B}_{i}^j) \partial_{y_j} \gamma, \quad \text{for all } i=1,...,N-1.
\end{equation*}
Thus, \eqref{J_PHI} becomes
\begin{equation*}
\begin{split}
    J_\phi &= \left[\sum_j (\delta_{1j} - y_N\mathcal{B}_{1}^j)\partial_{y_j} \gamma,...,\sum_j (\delta_{N-1,j} - y_N\mathcal{B}_{N-1}^j)\partial_{y_j} \gamma , \eta\right]\\
    &=\left[\sum_j C_{j1}\partial_{y_j} \gamma,...,\sum_j C_{j,N-1}\partial_{y_j} \gamma , \eta\right],
\end{split}
\end{equation*}
where $C:= I - y_N \mathcal{B}$. Using the definition of the determinant and its multilinearity, it can be proved that
\begin{equation}\label{det}
    \operatorname{det} J_\Phi = \operatorname{det}\left[\sum_j C_{j1}\partial_{y_j} \gamma,...,\sum_j C_{j,N-1}\partial_{y_j} \gamma , \eta\right] = \operatorname{det}\left[\partial_{y_1} \gamma ,...,\partial_{y_{N-1}} \gamma, \eta\right] \operatorname{det} C.
\end{equation}
In addition, we know that
\begin{equation*}
    \operatorname{det} C = \operatorname{det}(I - y_N \mathcal{B}) = \prod_{j=1}^{N-1} (1-y_N k_j(y')),
\end{equation*}
where $k_1(y'),...,k_{N-1}(y')$ are the principal curvatures of the hypersurface parametrized by $\gamma$. Since $\partial \Omega \times [t_1,t_2]$ is smooth and compact in $\mathbb{R}^N$, it follows that $|k_j(y')|\leq K$, for any $j=1,...,N-1$. Moreover, we know that $y_N \in (0,\bar \delta)$ and we can choose $\bar \delta$ small such that $y_N < \frac{1}{2K}$. Thus, it easily follows that
\begin{equation*}
    \frac{1}{2} \leq |1-y_N k_j(y')| \leq \frac{3}{2}, \quad \text{ for any } j=1,...,N-1.
\end{equation*}
This would imply the following inequalities:
\begin{equation}\label{det_INEQ}
   \left(\frac{1}{2} \right)^{N-1}\leq |\operatorname{det}(I - y_N \mathcal{B})|\leq \left(\frac{3}{2}\right)^{N-1}.
\end{equation}
Using \eqref{det} and \eqref{det_INEQ}, \eqref{eq_FLAT} becomes
\begin{equation*}
\begin{split}
     \int_{B_{\bar \delta}^+} |w(x)|^2 \ dx &= \int_{\mathcal{B}_{ft}^+} |w(\Phi(y))|^2 \cdot \left|\operatorname{det}\left[\partial_{y_1} \gamma ,...,\partial_{y_{N-1}} \gamma, \eta\right]\right||\operatorname{det}(I - y_N \mathcal{B}) |\ dy\\
     &\leq \left(\frac{3}{2}\right)^{N-1} \int_{\mathcal{B}_{ft}^+} |w(\Phi(y))|^2 \cdot \left|\operatorname{det}\left[\partial_{y_1} \gamma ,...,\partial_{y_{N-1}} \gamma, \eta\right]\right| \ dy\\
     &=\left(\frac{3}{2}\right)^{N-1} \int_{A} \left(\int_{0}^{\delta} |w(\Phi(y))|^2 \ dy_N\right) \left|\operatorname{det}\left[\partial_{y_1} \gamma ,...,\partial_{y_{N-1}} \gamma, \eta\right]\right| \ dy',
\end{split}
\end{equation*}
where $A$ is the projection of $\mathcal{B}_{ft}^+$ on the $(N-1)$-coordinates determinated by $y'$ and $\delta=\delta(A)$ is the $N$-th coordinate of the corrisponding point in $\mathcal{B}_{ft}^+$.
Since by \eqref{Cont_comp}, $w=0$ on $\partial \Omega' \times [t_1,t_2]$, we have that $w(\Phi(y_1,...,y_{N-1}, \bar \delta))=0$. Therefore, using the one dimensional Poincar\'e inequality we obtain
\begin{equation}\label{STIMA_Fermi}
\begin{split}
     \int_{B_{\bar \delta}^+} |w(x)|^2 \ dx &\leq \left(\frac{3}{2}\right)^{N-1} \int_{A}\delta(y')^2 \left(\int_{0}^{\delta} |\partial_{y_N}w(\Phi(y))|^2 \ dy_N\right) \left|\operatorname{det}\left[\partial_{y_1} \gamma ,...,\partial_{y_{N-1}} \gamma, \eta\right]\right| \ dy'\\
     &\leq {\bar \delta}^2\left(\frac{3}{2}\right)^{N-1} \int_{A} \left(\int_{0}^{\delta} |\nabla_y w(\Phi(y))|^2 \ dy_N\right) \left|\operatorname{det}\left[\partial_{y_1} \gamma ,...,\partial_{y_{N-1}} \gamma, \eta\right]\right| \ dy'\\
     &\leq {\bar \delta}^2 3^{N-1} \int_{\mathcal{B}_{ft}^+} |\nabla_y w(\Phi(y))|^2\left|\operatorname{det}\left[\partial_{y_1} \gamma ,...,\partial_{y_{N-1}} \gamma, \eta\right]\right| |\operatorname{det}(I - y_N \mathcal{B}) | \ dy\\
     &= {\bar \delta}^2 3^{N-1} \int_{\mathcal{B}_{ft}^+} |\nabla_y w(\Phi(y))|^2 |\operatorname{det}J_{\Phi}(y)| \ dy \leq {\bar \delta}^2C\int_{B_{\bar \delta}^+} |\nabla w|^2 \ dx,
\end{split}
\end{equation}
where $C=C(\partial C_+,N)$. Note that, in the last computation, we also used the lower bound in \eqref{det_INEQ}.\\
Let now $\overline{D_2}= (\overline{\Omega \setminus \Omega'}) \times [t_1,t_2] \subset \mathbb{R}^N$. Consider $r> 2\bar\delta$ and the following covering of $\overline{D_2}$ by $\{B_r(\bar x_j)\}_j$, where $\bar x_j \in \partial \Omega \times [t_1,t_2]$. Since $\overline{D_2}$ is compact, by definition, there exists a finite $m>0$ subcover $\{B_r(\bar x_1),...,B_r(\bar x_m)\}$ of the open covering $\{B_r(\bar x_j)\}_j$ such that:
\begin{equation*}
    \overline{D_2}\subset \bigcup_{i=1}^m B_r(\bar x_i).
\end{equation*}
As in \eqref{pallette}, consider the sets $B_{\bar \delta,i}^+$ w.r.t. the set of centers $(\bar x_i)_{i=1,\ldots,m}$.
Therefore, using \eqref{STIMA_Fermi}, we get
\begin{equation}\label{termI2}
\begin{split}
    I_2 &= \int_{D_2}  |(u - u^{\bar \lambda + \varepsilon})^+|^2 \ dx \leq \int_{\bigcup_{i=1}^m B_{\bar \delta,i}^+} |(u - u^{\bar \lambda + \varepsilon})^+|^2 \ dx\\
    &\leq \sum_{i=1}^m \int_{ B_{\bar \delta,i}^+} |(u - u^{\bar \lambda + \varepsilon})^+|^2 \ dx \leq {\bar \delta}^2C \sum_{i=1}^m\int_{B_{\bar \delta,i}^+} |\nabla (u - u^{\bar \lambda + \varepsilon})^+|^2 \ dx\\
    &\leq {\bar \delta}^2mC\int_{D_2} |\nabla (u - u^{\bar \lambda + \varepsilon})^+|^2 \ dx.
\end{split}
\end{equation}
Concerning terms $I_1$ and $I_3$, they can be treated similarly. Hence, we focus on term $I_3$. Using the one dimensional Poincar\'e inequality, we obtain
\begin{equation}\label{termI3}
\begin{split}
   I_3 = \int_{D_3}  |(u - u^{\bar \lambda + \varepsilon})^+|^2 \ dx &= \int_\Omega \left(\int_{t_2}^{\bar \lambda + \varepsilon} |(u - u^{\bar \lambda + \varepsilon})^+|^2 \ dx_N\right) \ dx'\\
   &\leq (\bar \lambda + \varepsilon- t_2)^2 \int_\Omega \left(\int_{t_2}^{\bar \lambda + \varepsilon} |\partial_{x_N}(u - u^{\bar \lambda + \varepsilon})^+|^2 \ dx_N\right) \ dx'\\
   &\leq (\bar \lambda + \varepsilon- t_2)^2 \int_\Omega \left(\int_{t_2}^{\bar \lambda + \varepsilon} |\nabla(u - u^{\bar \lambda + \varepsilon})^+|^2 \ dx_N\right) \ dx'\\
   &= (\bar \lambda + \varepsilon- t_2)^2 \int_{D_3} |\nabla(u - u^{\bar \lambda + \varepsilon})^+|^2 \ dx.
\end{split}
\end{equation}
Analogously for the term $I_1$, we have
\begin{equation}\label{termI1}
   I_1 = \int_{D_1}  |(u - u^{\bar \lambda + \varepsilon})^+|^2 \ dx\leq (t_1)^2 \int_{D_1} |\nabla(u - u^{\bar \lambda + \varepsilon})^+|^2 \ dx.
\end{equation}
Summing up, by \eqref{3pezzi}, \eqref{termI2}, \eqref{termI3} and \eqref{termI1} we deduce
\begin{equation*}
\begin{split}
 \int_{D^*}  &|(u - u^{\bar \lambda + \varepsilon})^+|^2 \ dx \\
 &\quad\leq   {\bar \delta}^2 3mC\int_{D_2} |\nabla (u - u^{\bar \lambda + \varepsilon})^+|^2 \ dx +   (\bar \lambda + \varepsilon- t_2)^2 \int_{D_3} |\nabla(u - u^{\bar \lambda + \varepsilon})^+|^2 \ dx\\
&\qquad\qquad+  (t_1)^2 \int_{D_1} |\nabla(u - u^{\bar \lambda + \varepsilon})^+|^2 \ dx\\
&\quad \leq \max\{{\bar \delta}^2 mC(\bar \lambda + \varepsilon- t_2)^2,(t_1)^2\} \int_{D^*} |\nabla(u - u^{\bar \lambda + \varepsilon})^+|^2 \ dx.
\end{split}
\end{equation*}
Therefore, by \eqref{Pre_F}, we can choose $\bar\delta, t_1,t_2$ such that
\begin{equation}\label{miserve}
  \int_{D^*} |\nabla(u - u^{\bar \lambda + \varepsilon})^+|^2 \ dx \leq 0.
\end{equation}
Hence it follows that, $\nabla (u-u^{\bar \lambda + \varepsilon})^+ = 0$ in ${D^*}$, but since $(u-u^{\bar \lambda + \varepsilon})^+ = 0$ on $\Omega_0$, then $u \leq u^{\bar \lambda + \varepsilon}$ in ${D^*}=C_{\bar \lambda + \varepsilon}\setminus K$. Moreover, by \eqref{Cont_comp}, $u < u^{\bar \lambda + \varepsilon}$ in $K$. Therefore, $u \leq u^{\bar \lambda + \varepsilon}$ in $C_{\bar \lambda + \varepsilon}$, in contradiction with the definition of $\bar \lambda$.

\subsubsection{The case \texorpdfstring{$1 < p \leq \frac{2N+2}{N+2}$}{1< p < = 2N+2/N+2}}\ \\

Let us define $Z$ as the set of critical points of $u$, namely $Z=\{x \in C_{+} : \nabla u = 0\}$. We recall that $u \leq u^{\bar \lambda}$ in $ C_{\bar \lambda}$. By the strong comparison principle, see \cite{D,P-S},  if $\mathcal{C}$ is a connected component of $ C_{\bar \lambda} \setminus Z$, then $u < u^{\bar \lambda}$ unless $u \equiv u^{\bar \lambda}$ in $\mathcal{C}$. 
\begin{center}
Consider a connected component $\mathcal{C}$ of $ C_{\bar \lambda} \setminus Z$ and suppose that $u \equiv u^{\bar \lambda}$ in $C$.\\
\end{center}
Note that $\mathcal{C}\cap\overline{\Omega_0}=\emptyset$ by Proposition \ref{N_mon}.  Let $\hat{\mathcal{C}} := \mathcal{C} \cup R_{\bar \lambda}(\mathcal{C})$, where $R_{\bar \lambda}$ stands for the reflection with respect to $\Omega \times \{\bar \lambda\}$. Since $u \equiv u^{\bar \lambda}$  in $\mathcal{C}$, by the local symmetry we have that
\begin{equation*}
\partial \hat{\mathcal{C}} \subset Z \cup \left ( \partial \Omega \times (0,2 \bar \lambda) \right ).
\end{equation*}
To proceed, let us define the function
\begin{equation*}
G_{\varepsilon}(s) :=
\begin{cases}
0, & \text{if } |s| \leq \varepsilon,\\
2s-2\varepsilon, & \text{if } \varepsilon \leq |s| \leq 2\varepsilon,\\
s, & \text{if } |s| \geq 2 \varepsilon.
\end{cases}
\end{equation*}
Passing trough a density argument, we can plug $T_\varepsilon(|\nabla u|):=\frac{G_{\varepsilon}(|\nabla u|)}{|\nabla u|}$ as test function in \eqref{main_problem}, with the notation $T_\varepsilon(0)=0$, and integrating over $\hat{\mathcal{C}}$, we get
\begin{equation*}
\int_{\hat{\mathcal{C}}} |\nabla u|^{p-2} \nabla u \cdot \nabla(T_\varepsilon(|\nabla u|)) \ dx= \int_{\hat{\mathcal{C}}} f(u)\cdot T_\varepsilon(|\nabla u|) \ dx.
\end{equation*}
Note that, since $G_\varepsilon(|\nabla u|)$ vanishes in a neighborhood of each critical point, we can use $T_\varepsilon(|\nabla u|)$ as test function.\\
Since $|f(u) \cdot T_\varepsilon(|\nabla u|)| \leq f(u) \in L^1(\hat{\mathcal{C}})$ and $f(u) \cdot T_\varepsilon(|\nabla u|) \rightarrow f(u)$ a.e. in $\hat{\mathcal{C}}$ as $\varepsilon \rightarrow 0$, by the dominated convergence theorem, recalling \eqref{Hp}, we deduce
\begin{equation}\label{positive_f}
\int_{\hat{\mathcal{C}}} f(u) \cdot T_\varepsilon(|\nabla u|) \ dx \rightarrow \int_{\hat{\mathcal{C}}} f(u) \ dx > 0,
\end{equation}
as $\varepsilon \rightarrow 0$.\\
Let us now estimate the integral on the right hand side of the previous equality, namely
\begin{equation*}
\begin{split}
&\left | \int_{\hat{\mathcal{C}}} |\nabla u|^{p-2} \nabla u \cdot \nabla(T_\varepsilon(|\nabla u|)) \ dx \right | = \left | \int_{\hat{\mathcal{C}}} |\nabla u|^{p-2} \nabla u \cdot \nabla|\nabla u| T'_\varepsilon(|\nabla u|) \ dx \right |\\ 
&\quad= \left | \int_{\varepsilon \leq |\nabla u| \leq 2\varepsilon} |\nabla u|^{p-2} \nabla u \cdot \nabla|\nabla u| T'_\varepsilon(|\nabla u|) \ dx \right | \leq 2\int_{\varepsilon \leq |\nabla u| \leq 2\varepsilon} |\nabla u|^{p-2} |D^2 u| \ dx\\
&\quad \leq \left ( \int_{\varepsilon \leq |\nabla u| \leq 2\varepsilon} |\nabla u|^{p-2-\beta} |D^2 u|^2 \ dx \right )^{\frac{1}{2}} \cdot \left ( \int_{\varepsilon \leq |\nabla u| \leq 2\varepsilon} |\nabla u|^{p-2+\beta} \ dx \right )^{\frac{1}{2}},
\end{split}
\end{equation*}
where we used the fact that $T'_\varepsilon(|\nabla u|) \neq 0$ only in $\varepsilon \leq |\nabla u| \leq 2\varepsilon$ and in this set behaves like $\frac{1}{\varepsilon}$.\\
Since $|\nabla u|^{p-2-\beta} |D^2 u|^2 \in L^1(\varepsilon \leq |\nabla u| \leq 2\varepsilon)$,  see \cite[Theorem $1.1$]{D-S-2}, for $\beta \approx 1$, we have
\begin{equation*}
\left ( \int_{\varepsilon \leq |\nabla u| \leq 2\varepsilon} |\nabla u|^{p-2-\beta} |D^2 u|^2 \ dx \right )^{\frac{1}{2}} \cdot \left ( \int_{\varepsilon \leq |\nabla u| \leq 2\varepsilon} |\nabla u|^{p-2+\beta} \ dx \right )^{\frac{1}{2}} \rightarrow 0,
\end{equation*}
as $\varepsilon \rightarrow 0$, in contradiction with \eqref{positive_f}. This is an absurd, and therefore, we get $u < u^{\bar \lambda}$ in $\mathcal{C}$.

\vspace{0.2cm}

Summing up, we proved that $u < u^{\bar \lambda}$ in any connected component of ${C}_{\bar \lambda} \setminus Z$.\\

By positivity of $f$, using \cite[Proposition 6.1]{MMS}, we have $|Z|=0$, then we can consider a set $\mathcal{Z}$
such that $Z\subset \mathcal{Z}$ and the measure $|\mathcal{Z}|$ is arbitrarily small.\\
Consider a compact set $K \subset \subset (C_{\bar \lambda} \setminus \mathcal{Z})$ such that $|{C}_{\bar \lambda} \setminus K|$ is small enough because of $|\mathcal{Z}|$ is arbitrarily small. We know that, by the previous argument, $u^{\bar \lambda} - u \geq m > 0$ in $K$, moreover, since $K$ is compact, there exists $\varepsilon > 0$ small such that $u^{\bar \lambda + \varepsilon} - u \geq \frac{m}{2}>0$ in $K$ and $|{C}_{\bar \lambda+\varepsilon} \setminus K|$ is still small enough.\\
Let us now test the equations satisfied by $u$ and $u^{\bar \lambda + \varepsilon}$ with $(u - u^{\bar \lambda + \varepsilon})^+$, obtaining
\begin{equation}\label{critico_D}
\begin{split}
\int_{{C}_{\bar \lambda+\varepsilon} \setminus K} &(|\nabla u|^{p-2} \nabla u - |\nabla u^{\bar \lambda + \varepsilon}|^{p-2} \nabla u^{\bar \lambda + \varepsilon}) \cdot \nabla (u - u^{\bar \lambda + \varepsilon})^+ \ dx \\
&= \int_{{C}_{\bar \lambda+\varepsilon} \setminus K} (f(u) - f(u^{\bar \lambda + \varepsilon}))(u- u^{\bar \lambda + \varepsilon})^+ \ dx \leq C_L \int_{{C}_{\bar \lambda+\varepsilon} \setminus K} |(u - u^{\bar \lambda + \varepsilon})^+|^2 \ dx,
\end{split}
\end{equation}
where we used the Lipschitz continuity of $f$.\\
Let us set $w(x):=(u(x)-u^{\bar \lambda + \varepsilon}(x))^+$, for $x\in {C}_{\bar \lambda+\varepsilon} \setminus K$. Let $\Omega' \subset \subset \Omega$, then we define the compact set as follows $K:= \overline{\Omega'} \times [t_1,t_2] \setminus \mathcal{Z}$. We split the integral on the right-hand side of \eqref{critico_D} in:
\begin{equation}\label{4pezzi}
\begin{split}
\int_{{C}_{\bar \lambda+\varepsilon} \setminus K} |w|^2 \ dx &= \int_{(\Omega \times (0,t_1)) \setminus \mathcal{Z}}|w|^2 \ dx + \int_{(\Omega \times (t_2,\bar \lambda+\varepsilon)) \setminus \mathcal{Z}}|w|^2 \ dx \\
&\qquad +  \int_{(\Omega \times [t_1,t_2]) \setminus (\Omega' \times [t_1,t_2] \cup \mathcal{Z})}|w|^2 \ dx + \int_{Z_\delta}|w|^2 \ dx=: I_1+I_2+I_3 + I_4.
\end{split}
\end{equation}
Reasoning as in the proof of \eqref{miserve}, we have 
\begin{equation*}
\begin{split}
    &\int_{{C}_{\bar \lambda+\varepsilon} \setminus K} |\nabla (u - u^{\bar \lambda + \varepsilon})^+|^2 \ dx \leq 0.
\end{split}
\end{equation*}
Finally, by the previous inequality and by the fact that $u < u^{\bar \lambda + \varepsilon}$ in $K$, we infer that:
\begin{equation*}
u \leq u^{\bar \lambda + \varepsilon} \quad \text{in} \quad {C}_{\bar \lambda+\varepsilon},
\end{equation*}
in contraddiction with the definition of $\bar \lambda$, proving that $\bar \lambda = + \infty$ and thus the thesis.\\

\subsection{The case \texorpdfstring{$p > 2$}{p>2}}\ \\ \ \\
The case $p>2$ requires more care. In particular, we need a suitable cube decomposition $\{A_i\}_i$ (see Section \ref{notations}) that allows us to prove a weighted Poincar\'e inequality on each element $A_i$.
By the strong comparison principle, see \cite[Theorem $3.7$]{D-S}, we have $u < u^{\bar \lambda}$ in $C_{\bar \lambda}$. Using Theorem \ref{thm:strongggggggg}, we also have that $u<u_{\bar\lambda}$ on $\partial \Omega\times(0,\bar{\lambda})$; this because, since our domain is cylindrical, each connected component of the lateral surface touches $\partial \Omega_0$ so the case $u\equiv u_{\bar{\lambda}}$ on $\partial \Omega\times(0,\bar{\lambda})$ is avoided by Dirichlet condition. 

Moreover we point out that in a neighborhood of $\overline{\Omega_0}$, $u<u_{\bar\lambda}$; indeed $u$ satisfies the Dirichlet condition on $\Omega_0$ while the reflected solution is positive in the interior of the cylinder and remains positive on the lateral surface; otherwise, the Hopf Lemma would imply that the derivative with respect to the normal is positive, in contrast with the Neumann condition.

 By the uniform continuity, now, we deduce that, for $\varepsilon>0$ small enough, $u < u^{\bar \lambda+\varepsilon}$ in the compact set, 
 $$
  K=\overline{\Omega}\times[0,t_2], \qquad t_2\simeq\bar{\lambda}.
 $$
 Next the proof reduces in establishing a comparison principle in $D_3:=\Omega \times (t_2,\bar \lambda + \varepsilon)$ (see Figure \ref{fig:rettangolo-lambda}).\\
Testing our problem with $(u- u^{\bar \lambda+ \varepsilon})^+$ we have
\begin{equation}\label{D_NUOVO}
    \begin{split}
\int_{D_3} &(|\nabla u|^{p-2} \nabla u - |\nabla u^{\bar \lambda + \varepsilon}|^{p-2} \nabla u^{\bar \lambda + \varepsilon}) \cdot \nabla (u - u^{\bar \lambda + \varepsilon})^+ \ dx\\
&\qquad\qquad\qquad\quad= \int_{D_3} (f(u) - f(u^{\bar \lambda + \varepsilon}))(u- u^{\bar \lambda+ \varepsilon})^+ \ dx,
\end{split}
\end{equation}
being $(u- u^{\bar \lambda+ \varepsilon})^+\equiv 0$ on $K$. 
Thus, we have
\begin{equation}\label{sinistra_D1}
\begin{split}
\int_{D_3} &(|\nabla u|^{p-2} \nabla u - |\nabla u^{\bar \lambda + \varepsilon}|^{p-2} \nabla u^{\bar \lambda + \varepsilon})  \cdot \nabla (u - u^{\bar \lambda + \varepsilon})^+ \ dx\\
&\qquad\geq C(p)\int_{D_3} (|\nabla u| + |\nabla u^{\bar \lambda  + \varepsilon}|)^{p-2} |\nabla (u - u^{\bar \lambda + \varepsilon})^+|^2 \ dx,
\end{split}
\end{equation}
where $C$ is a positive constant depending only on $p$.\\
Moreover, using the local Lipschitz continuity of $f$ we get:
\begin{equation}\label{estimate}
\begin{split}
\int_{D} &(f(u) - f(u^{\bar \lambda  + \varepsilon}))(u- u^{\bar\lambda  + \varepsilon})^+ \ dx\\
&\leq C_L \int_{D_3} |(u- u^{\bar \lambda + \varepsilon})^+|^2 \ dx\\
\end{split}
\end{equation}
As remarked before, a crucial step, when $p\geq 2$, is the establishment of a weighted Poincar\'e inequality. The proof is neither easy nor straightforward. This difficulty arises from the fact that our problem involves both Dirichlet and Neumann boundary conditions. As will be shown below, in order to prove the required Poincar\'e inequality, we need to work with subdomains of the cylinder $ C_+$ that are convex and whose ratio between the Lebesgue measure and the diameter remains constant as the Lebesgue measure of the domain tends to zero. This makes the proof particularly technical and involved.\\
Recall that $\Omega\subset \mathbb{R}^{N-1}$  is a smooth domain; by assumption, the boundary $\partial \Omega$ is a smooth manifold of dimension $N-2$, 
therefore, for each point $p \in \partial \Omega$ there exist a radius $r_p>0$, 
a ball $B_{r_p}(p) \subset \mathbb{R}^{N-1}$, an open set $V_p \subset \mathbb{R}^{N-1}$, 
and a smooth diffeomorphism $\Phi_p$ such that
\[
\Phi_p : B_{r_p}(p) \to \Phi_p(B_{r_p}(p)) = V_p,
\]
and
\[
\Phi_p\bigl(B_{r_p}(p) \cap \partial \Omega\bigr)
= \Phi_p(B_{r_p}(p)) \cap \bigl(\mathbb{R}^{N-2} \times \{0\}\bigr),
\]
where $\{0\} \in \mathbb{R}$. 
Since $\partial \Omega$ is compact, there exists a finite collection of points 
\begin{equation}\label{centri}
p_i \in \partial \Omega,\quad i=1,2,\dots,m,
\end{equation} 
such that
\[
\partial \Omega \subset \bigcup_{i=1}^m B_{r_{p_i}}(p_i).
\]
For each point $p \in \partial \Omega$ there exists an index $i \in \{1,2,\dots,m\}$ such that
$p \in B_{r_{p_i}}(p_i)$. Then we choose $r_p>0$ sufficiently small so that
\[
B_{r_p}(p) \subset B_{r_{p_i}}(p_i).
\]
Since $\partial \Omega$ is compact, there exists $\hat r>0$ such that
\begin{equation}\label{eq:min1}
\forall p \in \partial \Omega \ \exists\, i \in \{1,2,\dots,m\} \text{ such that }
B_{\hat r}(p) \subset B_{r_{p_i}}(p_i).
\end{equation}
Let us consider the center $g_j$ of each cube $I^\partial_{\delta,j}$ defined in the Section \ref{notations}. 
Let us consider its projection, say $p_{g_j}$, onto the boundary $\partial\Omega$. 
This is possible being the domain $\Omega$ smooth, 
and therefore there exists a neighborhood $U$ of $\partial \Omega$ with the unique nearest point property, 
together with a smooth projection map $P : U \to \partial\Omega$.

Now, let us choose $\delta>0$ sufficiently small such that the cube 
\begin{equation}\label{eq:min2}
Q^G_{3\delta,j}\subset B_{\frac{\hat r}{2}}(p_{g_j}).
\end{equation}
Summarizing let us consider the sets $A_k$ of Theorem \ref{main_CUBE} such that
$\overline{A_k} \cap \partial \Omega \neq \emptyset$. Then, for any $k$, there exist a point $p_i \in \partial \Omega$ in\eqref{centri} such that
\[
A_k \subset Q^G_{3\delta,j} \subset B_{\frac{\hat r}{2}}(p_{g_j}) \subset B_{r_{p_i}}(p_i),
\]
by the constructions in \eqref{eq:min1} and \eqref{eq:min2}. Therefore, using the diffeomorphism $\Phi_{p_i}$, we obtain
\[
A_k^* = \Phi_{p_i}(A_k),
\]
which satisfies
\begin{equation}\label{eq:domtrasf}
\operatorname{diam}(A_k^*) \sim \delta, \qquad |A_k^*| \sim \delta^{N-1},
\end{equation}
by Theorem \ref{main_CUBE} and the fact that $\Phi_{p_i}$ is a diffeomorphism.

We now define the set
\[
A_{k,c}^* := \bigcap \bigl\{ C \subset \mathbb{R}^{N-1} \,\big|\, 
C \text{ is convex and } A_k^* \subset C \bigr\}.
\]
By definition, $A_{k,c}^*$ is the smallest convex set containing $A_k^*$, that is,
the convex hull of $A_k^*$.
Let
\[
A_{k,c} = \Phi_{p_i}^{-1}(A_{k,c}^*),
\]
and define the set $B_{k,c} \subset \mathbb{R}^N$ by
\[
B_{k,c} =\bigcup_{l\geq 1} A_{k,c} \times [t_2+(l-1)\delta, t_2+l\delta]=:\bigcup_{l\geq 1}B_{k,l,c},
\]
where $t_2$ will be chosen later (see Figure \ref{fig:rettangolo-lambda}).

We define the change of variables
\[
\Psi_{p_i} : B_{r_{p_i}}(p_i) \times [h, h+\delta] \to V_{p_i} \times [h, h+\delta], h\geq 0
\]
as
\begin{equation*}
\begin{cases}
y' = \Phi_{p_i}(x'), \\
y_N = x_N.
\end{cases}
\end{equation*}
Finally, we consider the sets
\[
B_{k,l,c}^* = \Psi_{p_i}(B_{k,l,c}).
\]
Since $A_{k,c}^*$ is convex and $ [t_2+(l-1)\delta, t_2+l\delta]$ is an interval, it follows that
$B_{k,l,c}^*$ is a convex set and satisfies
\begin{equation}\label{eq:diamarea}
\operatorname{diam}(B_{k,l,c}^*) \sim \delta, \qquad |B_{k,l,c}^*| \sim \delta^{N}.
\end{equation}
Therefore, by construction (see Theorem \ref{main_CUBE}), since the sets $B_{k,c}$ and
$$
B^\circ_{k,c}:=\bigcup_{l\geq 1} A_{k} \times [t_2+(l-1)\delta, t_2+l\delta]=:\bigcup_{l\geq 1}B^\circ_{k,l,c},
$$ 
(where $A_k$ are such that $\overline{A_k}\subset \Omega$) form a covering of $D_3$, we have
\begin{equation}\label{D_1_Monty}
\int_{D_3} |(u - u^{\bar \lambda + \varepsilon})^+|^2 \, dx
\leq \sum_{k=1}^s \int_{B_{k,c}} |(u - u^{\bar \lambda + \varepsilon})^+|^2 \, dx + \sum_{k=1}^{\tilde N} \int_{B^\circ_{k,c}} |(u - u^{\bar \lambda + \varepsilon})^+|^2 \, dx.
\end{equation}
Let us now deal with the first integral on the r.h.s. of \eqref{D_1_Monty}. Using the change of variables defined above, we obtain
\begin{eqnarray}\label{eq:partenza}
&&\int_{B_{k,l,c}} |(u - u^{\bar \lambda + \varepsilon})^+|^2 (x)\, dx \\\nonumber
&&\quad = \int_{B_{k,l,c}^*} |(u - u^{\bar \lambda + \varepsilon})^+|^2 (\Psi^{-1}_{p_i}(y)) \,
\left| \det J_{\Psi^{-1}_{p_i}}(y) \right| \, dy \label{teofon-1}\\\nonumber
&&\quad \leq C(\Psi^{-1}_{p_i}) \int_{B_{k,l,c}^*} |(u - u^{\bar \lambda + \varepsilon})^+|^2 (\Psi^{-1}_{p_i}(y)) \, dy.
\end{eqnarray}
In order to recover the required weighted Poincar\'e inequality, we define
\begin{equation*}
w(y)=
\begin{cases}
\left(u-u^{\bar \lambda + \varepsilon}\right)^+(\Psi^{-1}_{p_i}(y',y_N)) 
& \text{if } (y',y_N) \in B_{k,l,c}^*,\\[6pt]
-\left(u-u^{\bar \lambda + \varepsilon}\right)^+(\Psi^{-1}_{p_i}(y',-y_N)) 
& \text{if } (y',y_N) \in B_{k,l,c}^{*,r},
\end{cases}
\end{equation*}
where $(y',y_N) \in B_{k,l,c}^{*,r}$ if and only if $(y',-y_N) \in B_{k,l,c}^*$.\\
Since $w$ has zero mean on $B_{k,l,c}^* \cup B_{k,l,c}^{*,r}$, for a.e.\ $y \in B_{k,l,c}^*$ we have (see \cite[Lemma~7.16]{GT})
\begin{equation}\label{teofon}
\begin{split}
|w(y)| &\leq \hat C \int_{B_{k,l,c}^* \cup B_{k,l,c}^{*,r}} 
\frac{|\nabla w(z)|}{|y-z|^{N-1}} \, dz \\
&= \hat C \int_{B_{k,l,c}^*} \frac{|\nabla w(z)|}{|y-z|^{N-1}} \, dz
 + \hat C \int_{B_{k,l,c}^{*,r}} \frac{|\nabla w(z)|}{|y-z|^{N-1}} \, dz \\
&\leq 2 \hat C \int_{B_{k,l,c}^*} \frac{|\nabla w(z)|}{|y-z|^{N-1}} \, dz.
\end{split}
\end{equation}
Here
\[
\hat C = \frac{(\operatorname{diam}(B_{k,l,c}^* \cup B_{k,l,c}^{*,r}))^N}{N \mathcal{L}(B_{k,l,c}^* \cup B_{k,l,c}^{*,r})} = \bar C(N).
\]
We point out that this constant depends only on $N$, thanks to \eqref{eq:diamarea}, see also Remark \ref{rem1}. The purpose of considering this cube decomposition (see Theorem \ref{main_CUBE}) is to apply the weighted Poincar\'e inequality within each element of the family ${\{B_{k,c}^*\}}_k$, exploiting the fact that the constant $\hat C$ in \eqref{teofon-1} depends only on $N$ and not on the diameter or the size of the sets. Indeed, later on it will be necessary to consider domains that become arbitrarily small while retaining control of the constants.\\
Moreover, in order to apply inequality~\eqref{teofon-1}, we need to work on convex domains (see \cite[Lemma~7.16]{GT}). This justifies the delicate construction of the sets $B_{k,c}^*$, which was necessary to ensure that the underlying domains are convex.\\
Therefore, after deriving \eqref{teofon-1}, we can apply \cite[Theorem $8$]{FMS} (in particular \cite[Corollary $2$]{FMS}), to deduce the weighted Poincar\'e inequality with weight $\rho=|\nabla u|^{p-2}$, obtaining:
\begin{eqnarray}\label{eq:arrivo}
&&\int_{B_{k,l,c}^*} |(u- u^{\bar \lambda  + \varepsilon})^+|^2 (\Psi^{-1}_{p_i}(y)) \ dy \\\nonumber
&&\quad\leq \hat C^2 C_p(B_{k,l,c}^*)  \int_{B_{k,l,c}^*}|\nabla_y u|^{p-2}(\Psi^{-1}_{p_i}(y)) |\nabla_y(u- u^{\bar \lambda + \varepsilon})^+|^2(\Psi^{-1}_{p_i}(y))\, dy.
\end{eqnarray}
Using the change of variables $\Psi_{p_i}$ in \eqref{eq:arrivo}, from \eqref{eq:partenza} and since $p>2$, we obtain
\begin{equation}\label{termini_inv}
\begin{split}
\int_{B_{k,l,c}} &|(u- u^{\bar \lambda + \varepsilon})^+|^2 (x)\, dx  \\
&\leq C(\Psi_{p_i})\, \hat C^2\, C_p(B_{k,l,c}^*)  
\int_{B_{k,l,c}} |\nabla_x u|^{p-2}(x)\,
|\nabla_x (u- u^{\bar \lambda + \varepsilon})^+|^2(x)\, dx\\
&\leq C(\Psi_{p_i})\, \hat C^2\, C_p(B_{k,l,c}^*)  
\int_{B_{k,l,c}} (|\nabla_x u|+|\nabla_x u^{\bar \lambda + \varepsilon}|)^{p-2}(x)\,
|\nabla_x (u- u^{\bar \lambda + \varepsilon})^+|^2(x)\, dx.
\end{split}
\end{equation}
We remark that the constant $C_p(B_{k,l,c}^*) \to 0$ as $\delta \to 0$.
Indeed, if $\delta \to 0$ then $|B_{k,l,c}| \to 0$ by Theorem~\ref{main_CUBE}.
The same holds for the Lebesgue measure of the transformed set $B_{k,l,c}^*$, see
\eqref{eq:domtrasf} and \eqref{eq:diamarea}.
Therefore, $C_p(B_{k,l,c}^*) \to 0$ as $\delta \to 0$, see \cite[Corollary $2$]{FMS}.\\
Concerning the terms involving the integrals on $B^\circ_{k,l,c}$, a similar argument implies that
\begin{equation*}
\int_{B^\circ_{k,l,c}} |(u- u^{\bar \lambda + \varepsilon})^+|^2\, dx\leq C(\delta)  
\int_{B^\circ_{k,l,c}} (|\nabla u|+|\nabla u^{\bar \lambda + \varepsilon}|)^{p-2}\,
|\nabla (u- u^{\bar \lambda + \varepsilon})^+|^2\, dx,
\end{equation*}
where $C(\delta) \rightarrow 0$ as $\delta \rightarrow 0$.\\
Moreover, using the previous inequality and \eqref{termini_inv}, we can estimate the r.h.s. of \eqref{D_1_Monty} as follows
\begin{equation}\label{D_1_Monty_good}
\begin{split}
&\int_{D_3} |(u - u^{\bar \lambda + \varepsilon})^+|^2 \, dx\\
&\quad \leq C(N)\sup_{k=1,...,s}{C_p(B_{k,c}^*)}\sum_{k=1}^s\int_{B_{k,c}} (|\nabla u|+|\nabla u^{\bar \lambda + \varepsilon}|)^{p-2}\,
|\nabla (u- u^{\bar \lambda + \varepsilon})^+|^2\, dx\\
& \quad\qquad + C(N)C(\delta)  
\sum_{k=1}^{\tilde N}\int_{B^\circ_{k,c}} (|\nabla u|+|\nabla u^{\bar \lambda + \varepsilon}|)^{p-2}\,
|\nabla (u- u^{\bar \lambda + \varepsilon})^+|^2\, dx\\
&\quad \leq C(N)\max\{s\sup_{k=1,...,s}{C_p(B_{k,c}^*)},\tilde{N}C(\delta)\}\int_{D_3} (|\nabla u|+|\nabla u^{\bar \lambda + \varepsilon}|)^{p-2}\,
|\nabla (u- u^{\bar \lambda + \varepsilon})^+|^2\, dx\\
&\quad = \tilde C_1(\delta) \int_{D_3} (|\nabla u|+|\nabla u^{\bar \lambda + \varepsilon}|)^{p-2}\,
|\nabla (u- u^{\bar \lambda + \varepsilon})^+|^2\, dx,
\end{split}
\end{equation}
where $ \tilde C_1(\delta) \rightarrow 0$, as $\delta \rightarrow 0$.\\

Therefore, by \eqref{D_NUOVO}, \eqref{sinistra_D1}, \eqref{estimate}, \eqref{D_1_Monty_good}, we conclude that 
\begin{equation*}
\left(1-C(p) \tilde C_1(\delta)\right)\int_{D_3} (|\nabla u| + |\nabla u^{\bar \lambda  + \varepsilon}|)^{p-2} |\nabla (u - u^{\bar \lambda + \varepsilon})^+|^2 \ dx \leq 0.
\end{equation*}
Thus, for $\varepsilon>0$ small and $t_2$ close to $\bar{\lambda}$, $\delta$ can be choosen small such that
\begin{equation*}
    \left(1-C(p) \tilde C_1(\delta)\right)>0,
\end{equation*}
obtaining that $u \leq u^{\bar \lambda + \varepsilon}$ in $D_3$. Moreover, we know that $u < u^{\bar \lambda + \varepsilon}$ in $K$ and therefore we conclude that $u \leq u^{\bar \lambda + \varepsilon}$ in $C_{\bar \lambda + \varepsilon}$, in contradiction with the definition of $\bar \lambda$.

Therefore, we proved that for any $p > 1$, any solution $u$ of \eqref{main_problem} is monotone increasing w.r.t. the $x_N$ direction, namely
\begin{equation}\label{FINISH}
\frac{\partial u}{\partial x_N} \geq 0 \quad \text{in } C_+.
\end{equation}
In particular, by the strong comparison principle for the derivatives of the solutions in \cite{D-S}, \eqref{FINISH} holds with the strict inequality if $p > \frac{2N+2}{N+2}$.
\end{proof}
\section{Classification and Liouville type results}\label{liouville}
We prove here the classification results for the classical Allen-Cahn type problem. The existence of a 1-D solution is easy to verify and already known. The focus is on proving that this is the only possible solution. The monotonicity result comes into play,
at first allowing to deduce the limiting profile of the solution, that is actually achieved uniformly. This will allow the application of a refined version of the sweeping principle. 
\subsection{Allen-Cahn type problems}\label{ACtype}\ \\
Here we study bounded solutions of Allen-Cahn type equations, namely, we deal with 
\begin{equation}\label{ACequation}
\begin{cases}
-\Delta_{p} u = u(1-u^2)^{p-1} & \text{in } C_+\\
0< u < 1  & \text{in } C_+\\
\partial_{\nu} u = 0  & \text{on } \partial{\Omega} \times (0,+\infty)\\
u=0 & \text{on } \Omega \times \{0\}.\\
\end{cases}
\end{equation}

A first crucial information concerns the limiting profile at infinity that we deduce in the following Lemma:
\begin{lemma}\label{u_n_1}
Let $u \in C^{1,\alpha}_{loc}(\overline{C}_+)$ be a weak solution of \eqref{ACequation}. Then
\begin{equation*}
\lim_{x_N \rightarrow +\infty} u(x',x_N) = 1, \quad \text{uniformly in } x'.
\end{equation*}
\end{lemma}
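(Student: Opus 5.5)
By Theorem \ref{Monotonicity}, which applies because $f(s)=s(1-s^2)^{p-1}$ is positive and locally Lipschitz on $(0,1)$, the map $x_N\mapsto u(x',x_N)$ is nondecreasing for every $x'\in\overline\Omega$. Since $0<u<1$, the pointwise limit
\[
v(x') := \lim_{x_N\to+\infty} u(x',x_N)\in(0,1]
\]
exists.

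The plan has three steps. First, identify $v$ as a solution of a Neumann problem on $\Omega$. Second, show that this solution must be the constant $1$. Third, upgrade the pointwise convergence to uniform convergence.

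\emph{Step 1.} Set $u_n(x',x_N):=u(x',x_N+n)$ on $\Omega\times(-n,+\infty)$. Each $u_n$ solves the same equation with the homogeneous Neumann condition on the lateral boundary, and $\|u_n\|_\infty\le 1$. By the uniform $C^{1,\alpha}$ estimates up to the Neumann boundary (Lieberman \cite{Lib0}) on the sets $\overline\Omega\times[-R,R]$, the sequence $u_n$ is bounded in $C^{1,\alpha}(\overline\Omega\times[-R,R])$ for each $R$. Arzel\`a--Ascoli and a diagonal argument then give $C^{1}_{loc}(\overline\Omega\times\R)$ convergence. By monotonicity the limit is exactly $v(x')$, independent of $x_N$. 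Passing to the limit in the weak formulation, with test functions compactly supported in $x_N$ but not vanishing on $\partial\Omega\times\R$, shows that $v\in C^{1,\alpha}(\overline\Omega)$ weakly solves
\[
-\Delta_p v = v(1-v^2)^{p-1}\ \text{in }\Omega,\qquad \partial_\nu v=0 \ \text{on }\partial\Omega,\qquad 0<v\le 1 .
\]

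\emph{Step 2.} This is the key point. Take $\varphi\equiv1$ as test function in the Neumann problem for $v$, which is admissible since $v\in W^{1,p}(\Omega)$ and there is no Dirichlet part. This gives
\[
\int_\Omega v(1-v^2)^{p-1}\,dx' = 0 .
\]
The integrand is nonnegative because $0<v\le1$, so $v(1-v^2)^{p-1}\equiv0$. Since $v>0$, it follows that $v\equiv1$.

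Two details need care here. One is that $v>0$: this holds because $v(x')\ge u(x',1)>0$. The other is that the limit in Step 1 must be taken using test functions of the form $\psi(x_N)\varphi(x')$ on the full lateral boundary, so that the Neumann condition survives in the limit. I expect this passage to the limit, namely the boundary regularity needed for uniform $C^{1,\alpha}$ bounds near $\partial\Omega\times\R$, to be the main technical obstacle. It can be handled by Lieberman's Neumann regularity, or by the local flattening and reflection already used in Section \ref{notations}.

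\emph{Step 3.} We now know $u(\cdot,x_N)\to 1$ pointwise and monotonically on the compact set $\overline\Omega$, and each $u(\cdot,x_N)$ is continuous on $\overline\Omega$. Dini's theorem therefore gives uniform convergence. Alternatively, the $C^0_{loc}$ convergence of $u_n$ on $\overline\Omega\times[-1,1]$ combined with monotonicity in $x_N$ yields the same conclusion. This proves Lemma \ref{u_n_1}.
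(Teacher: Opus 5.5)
Your proposal is correct and follows the paper's proof essentially step by step: translate to $u_n(x',x_N)=u(x',x_N+n)$, use uniform $C^{1,\alpha}$ bounds and Arzel\`a--Ascoli to obtain an $x_N$-independent limit that weakly solves the Neumann problem on $\Omega$, then test with $\varphi\equiv 1$ to force the limit to be $1$. The differences are only cosmetic. You get positivity of the limit directly from $v\ge u(\cdot,1)>0$ rather than through the strong maximum principle. You also obtain uniform convergence via Dini's theorem (or locally uniform convergence plus monotonicity) rather than by simply citing the $C^1$-convergence of $u_n$.
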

\begin{proof} Since the solution $u$ is $x_N$-monotone by Theorem  \ref{Monotonicity}, and recalling the notation $x=(x',x_N)$, for any $x\in C_+$, for any $n\in \mathbb{N}$, define
\begin{equation*} 
u_n(x):= u(x',x_N+n).
\end{equation*}
and
\begin{equation*}
w(x'):= \lim_{n \rightarrow \infty} u_n(x).
\end{equation*}

The boundedness of $u$ guarantees, for any $x' \in \Omega$, that the limit is finite.
Consider as test function,
\begin{equation*}
\varphi = \varphi_1(x')\varphi_2(x_N), \quad \text{with } \varphi_1 \in C^{\infty}(\overline \Omega) \ \text{and } \varphi_2 \in C^\infty_c(\mathbb{R}^+).
\end{equation*}
In addition, we choose $\varphi_2$ such that,
\begin{equation}\label{int_2}
\int_{\mathbb{R}^+} \varphi_2 \ d x_N= 1.
\end{equation}
The function $u_n$, satisfies
\begin{equation}\label{ACequationq}
\begin{cases}
-\Delta_{p} u = u(1-u^2)^{p-1} & \text{in } C_+\\
0< u < 1  & \text{in } C_+\\
\partial_{\nu} u = 0  & \text{on } \partial{\Omega} \times (0,+\infty).
\end{cases}
\end{equation}
Testing the equation in \eqref{ACequationq} with $\varphi$, we get:
\begin{equation*}
\int_{C_+} |\nabla u_n|^{p-2} \nabla u_n \cdot \nabla \varphi \ dx= \int_{C_+} u_n(1-u_n^2)^{p-1} \varphi \ dx.
\end{equation*}
Exploiting the definition of $\varphi$, we get
\begin{equation}\label{wf_ac_phi}
\begin{split}
\int_{C_+}\varphi_2 |\nabla u_n|^{p-2} \nabla u_n \cdot \nabla \varphi_1  \ dx &+ \int_{C_+} \varphi_1|\nabla u_n|^{p-2} \nabla u_n \cdot \nabla \varphi_2  \ dx\\
&= \int_{C_+} u_n(1-u_n^2)^{p-1} \varphi_1\varphi_2 \ dx.
\end{split}
\end{equation}
Using the standard regularity results \cite{DB,Lib0}, since $u_n$ is bounded, we have that 
$$
\|u_n\|_{C^{1,\alpha}_{loc}(\overline{\Omega}\times\mathbb{R}^+)}<1,
$$
hence, by Ascoli-Arzel\'a Theorem, passing to the limit in \eqref{wf_ac_phi} as $n \rightarrow \infty$, we obtain:
\begin{equation*}
\begin{split}
\int_{C_+} \varphi_2 |\nabla w|^{p-2} \nabla w \cdot \nabla \varphi_1 \ dx &+ \int_{C_+} \varphi_1  |\nabla w|^{p-2} \nabla w \cdot \nabla \varphi_2\ dx\\
&= \int_{C_+} w(1-w^2)^{p-1} \varphi_1\varphi_2 \ dx.
\end{split}
\end{equation*}
Moreover, since $\nabla w = (\nabla_{x'} w, 0)$ and $\nabla \varphi_2 = (0', \partial_{x_N}\varphi_2)$,
\begin{equation*}
\int_{C_+} \varphi_1|\nabla w|^{p-2} \nabla w \cdot \nabla \varphi_2  \ dx = 0.
\end{equation*}
Hence, exploiting Fubini's Theorem and by \eqref{int_2}, namely $\int_{\mathbb{R}^+} \varphi_2 \ d x_N= 1$, we end up with:
\begin{equation}\label{fdeb}
\int_{\Omega} |\nabla w|^{p-2} \nabla w \cdot \nabla \varphi_1 \ dx'= \int_{\Omega} w(1-w^2)^{p-1} \varphi_1 \ dx', \quad \forall \varphi_1 \in C^\infty(\overline \Omega).
\end{equation}
In addition, exploiting $C^1$-convergence, we infer that $\partial_\nu w = 0$ on $\partial \Omega$, therefore, $w$ weakly solves
\begin{equation*}
\begin{cases}
-\Delta_p w = w(1-w^2)^{p-1} & \text{in } \Omega\\
0\leq w \leq 1 & \text{in } \Omega\\
\partial_\nu w=0 & \text{on } \partial \Omega.
\end{cases}
\end{equation*}
We claim now that $w \equiv 1$. To prove the claim, first of all, we notice that $w > 0$ as a consequence of the monotonicity of $u$ w.r.t. the $x_N$-direction and the strong maximum principle.\\
Furthermore, choosing $\varphi=1$ in $\overline{\Omega}$, as a test function in \eqref{fdeb}, we get
\begin{equation*}
\int_{\Omega} w(1-w^2)^{p-1} \ dx' = 0.
\end{equation*}
Hence, since $w > 0$ in $\Omega$, the claim is proved. The thesis follows exploiting the $C^1$-convergence of the sequence $\{u_n\}$.

\end{proof}
The next lemma, whose proof is standard and therefore omitted, establishes the existence and uniqueness of monotone increasing solutions of the one-dimensional Allen-Cahn type problem associated to \eqref{ACequation}.
\begin{lemma}\label{MI_AC}
There exists a unique monotone increasing function
\begin{equation*}
v(t) = \tanh\left(\frac{t}{(2p-2)^{\frac{1}{p}}}\right),
\end{equation*}
solution of
\begin{equation}\label{AC_1}
\begin{cases}
-\left[(v')^{p-1}\right]' = v(1-v^2)^{p-1} & \text{in } \mathbb{R}^+,\\
v(0)=0; \ \displaystyle  \lim_{t \rightarrow +\infty} v(t)=1.
\end{cases}
\end{equation}
\end{lemma}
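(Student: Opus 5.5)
The plan is to integrate the equation once using an energy identity, which reduces \eqref{AC_1} to a first-order autonomous ODE. Existence then comes from a direct verification and uniqueness from Cauchy--Lipschitz. Throughout, $v$ is monotone increasing, so $v'\geq 0$, $(v')^{p-1}$ is well defined, and $0\le v\le 1$ because $v(0)=0$ and $v\to 1$. Set
\[
c:=(2p-2)^{\frac1p},\qquad\text{so that } c^p=2(p-1).
\]

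\emph{Existence.} I would check directly that $v(t)=\tanh(t/c)$ solves \eqref{AC_1}.
\begin{itemize}
\item Since $v'=(1-v^2)/c>0$, we have $(v')^{p-1}=(1-v^2)^{p-1}/c^{p-1}$.
\item Differentiating, $\big[(v')^{p-1}\big]'=-2(p-1)\,v\,(1-v^2)^{p-2}\,v'/c^{p-1}=-2(p-1)\,v\,(1-v^2)^{p-1}/c^{p}$.
\item Because $c^p=2(p-1)$, this equals $-v(1-v^2)^{p-1}$, which is the equation.
\item The conditions $v(0)=0$ and $v(t)\to1$ are immediate.
\end{itemize}
This also shows that the constant $(2p-2)^{1/p}$ coincides with $(p-1)^{1/p}2^{1/p}$ in Theorem~\ref{AC_teo}.

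\emph{First integral.} Let $v$ be any monotone solution. On any interval where $v'>0$ the equation is nondegenerate, so $v\in C^2$ there. Multiplying the equation by $v'$ and using $\big[(v')^{p-1}\big]'v'=\frac{p-1}{p}\big((v')^p\big)'$ together with $v(1-v^2)^{p-1}v'=-\frac{1}{2p}\big((1-v^2)^p\big)'$, we get
\[
\frac{p-1}{p}(v')^p-\frac{1}{2p}(1-v^2)^p\equiv E.
\]
In the weak, possibly degenerate setting I would obtain the same identity by testing with $v'\varphi$ after a regularization, or by integrating $\frac{d}{dt}$ of the left-hand side in the distributional sense; both terms are $W^{1,1}_{loc}$, so this identity holds on all of $\mathbb{R}^+$.

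\emph{Identifying $E$ (the main obstacle).} I need $E=0$, which amounts to showing $v'(t)\to0$ as $t\to\infty$.
\begin{itemize}
\item Since $v$ is bounded and monotone, $v(t)\to1$.
\item The identity then forces $(v')^p\to pE/(p-1)$, so $v'$ has a limit $\ell\ge0$.
\item If $\ell>0$, then $v$ would be unbounded, a contradiction. Hence $\ell=0$, and therefore $E=0$.
\end{itemize}
With $E=0$ and $v'\ge0$, the first integral becomes
\[
v'=\frac{1-v^2}{c}.
\]

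\emph{Uniqueness.} The right-hand side $g(v)=(1-v^2)/c$ is Lipschitz in $v$, so the Cauchy problem $v'=g(v)$, $v(0)=0$, has a unique solution, namely $\tanh(t/c)$. Two consequences settle the remaining degenerate cases:
\begin{itemize}
\item $v=1$ is an equilibrium, so it cannot be reached in finite time. Thus $v<1$ everywhere.
\item Consequently $v'>0$ everywhere, and the possible degeneracy of the $p$-Laplacian never occurs. This justifies a posteriori the $C^2$ computations used in the first-integral step.
\end{itemize}
Therefore every monotone solution of \eqref{AC_1} coincides with $\tanh(t/c)$, which proves both existence and uniqueness.
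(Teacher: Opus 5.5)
Your proof is correct and complete. The paper omits this proof as standard, and your argument (first integral, $E=0$ from boundedness, reduction to $v'=(1-v^2)/(2p-2)^{1/p}$, then Cauchy--Lipschitz) is exactly that standard argument. One simplification: $w=(v')^{p-1}$ has continuous derivative $-v(1-v^2)^{p-1}$, so $w\in C^1$, and then $(v')^p=w^{p/(p-1)}$ is $C^1$ by the chain rule. Hence the energy identity holds pointwise on all of $\mathbb{R}^+$, and neither the regularization nor the a posteriori $C^2$ justification is needed.
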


The final preliminary result is a comparison principle between two solutions $u,v$ of problem \eqref{ACequation} in the domain $C_+$.
We want to point out that that there exists $0<\eta_1<1$ such that $g'(t)=(t(1-t^2)^{p-1})'\leq -D<0$ (when $\eta_1<t<1$). Moreover, by Lemma \ref{u_n_1}, there exists 
\begin{equation}\label{teta}
\theta=\theta(\eta_1)>0,
\end{equation} 
such that if $u,v$ are weak solutions to \eqref{ACtype}, $u,v>\eta_1$ for all $C_+\ni x=(x',x_N)$ with $x_N>\theta$. 
\begin{lemma}\label{confrontopiccolo}
Let $u,v$ be $C^{1,\alpha}_{loc}(\overline{C}_+)$-solutions of \eqref{ACequation} and let $\theta$ be defined in \eqref{teta}.
Suppose that
$$u\geq v \mbox{  in } \Omega\times[0,\theta].$$ 
Then $u\geq v$ in $C_+$.
\end{lemma}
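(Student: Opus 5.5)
The plan is to prove a weak comparison principle in the unbounded region $\Sigma_\theta:=\Omega\times(\theta,+\infty)$, using the fact that $g(t)=t(1-t^2)^{p-1}$ is strictly decreasing on $(\eta_1,1)$. On $\Sigma_\theta$ both $u$ and $v$ take values in $(\eta_1,1)$. On the bottom $\Omega\times\{\theta\}$ we have $u\ge v$ by hypothesis, and on the lateral boundary $\partial\Omega\times(\theta,+\infty)$ both functions satisfy the homogeneous Neumann condition. The natural test function is therefore $(v-u)^+$. Since $\Sigma_\theta$ is unbounded, we cut it off with $\varphi_R^{\,p}$, where $\varphi_R=\varphi_R(x_N)$ is a standard cutoff: $\varphi_R\equiv1$ for $x_N\le R$, $\varphi_R\equiv0$ for $x_N\ge 2R$, and $|\varphi_R'|\le 2/R$.

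\textbf{Step 1.} Test the equations of $v$ and $u$ with $\psi:=(v-u)^+\varphi_R^{\,p}$. This is admissible by density: $\psi$ vanishes on $\Omega\times\{\theta\}$, and no boundary term arises on the lateral surface thanks to the Neumann condition. Subtracting the two identities gives
\[
\int_{\Sigma_\theta}\big(|\nabla v|^{p-2}\nabla v-|\nabla u|^{p-2}\nabla u\big)\cdot\nabla(v-u)^+\varphi_R^{\,p}
+\int_{\Sigma_\theta}\big(g(u)-g(v)\big)(v-u)^+\varphi_R^{\,p}
=-p\int_{\Sigma_\theta}\varphi_R^{\,p-1}(v-u)^+\big(|\nabla v|^{p-2}\nabla v-|\nabla u|^{p-2}\nabla u\big)\cdot\nabla\varphi_R .
\]
On the set $\{v>u\}\cap\Sigma_\theta$ the mean value theorem gives $g(u)-g(v)\ge D\,(v-u)$, because both values lie in $(\eta_1,1)$. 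The first integral is nonnegative by the standard monotonicity inequality for the $p$-Laplacian (see \cite{D,LI}). Hence the left-hand side is bounded below by
\[
D\int_{\Sigma_\theta}\big((v-u)^+\big)^2\varphi_R^{\,p}.
\]

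\textbf{Step 2 (main obstacle).} We must control the cutoff term on the right. By the global $C^{1,\alpha}$ estimates on the translates used in Lemma \ref{u_n_1}, $\nabla u$ and $\nabla v$ are bounded in $\Sigma_\theta$. Moreover $(v-u)^+\le 1-\eta_1$, and $\nabla\varphi_R$ is supported in $\Omega\times(R,2R)$. The crude estimate is therefore $C\,R^{-1}|\Omega|\,R=C$. This is bounded but does not tend to zero, so it is not enough by itself.

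To improve it, I would use Lemma \ref{u_n_1} for both solutions: $u,v\to1$ uniformly, and by the $C^1$-convergence of translates established there, also $\nabla u,\nabla v\to0$ uniformly as $x_N\to\infty$. Then the right-hand side is bounded by
\[
C\,R^{-1}\cdot R\cdot\sup_{x_N>R}\big(|\nabla u|+|\nabla v|\big)^{p-1}\sup_{x_N>R}(v-u)^+ ,
\]
which tends to $0$ as $R\to\infty$. Letting $R\to\infty$ and applying monotone convergence on the left gives
\[
\int_{\Sigma_\theta}\big((v-u)^+\big)^2\le0 .
\]

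\textbf{Step 3.} From Step 2, $v\le u$ a.e.\ in $\Sigma_\theta$, hence everywhere by continuity. Combined with the hypothesis on $\Omega\times[0,\theta]$, this gives $u\ge v$ in $C_+$.

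If the gradient decay in Step 2 turns out to be delicate, the fallback is a Caccioppoli-type absorption. Keep the first integral of Step 1 on the left, apply Young's inequality to the cutoff term, and use the inequality $(|a|+|b|)^{p-2}|a-b|\cdot|\cdot|$ from \cite{D}. The resulting error involves only $(v-u)^+$, which tends to $0$ uniformly at infinity, and this again closes the argument.
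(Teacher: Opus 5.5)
Your proof is correct, but it handles the cutoff term differently from the paper.

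\textbf{The paper's route.} For $p\ge 2$ the paper tests with $(v-u)^+\varphi_R^2$. It keeps the weighted gradient term $\int(|\nabla u|+|\nabla v|)^{p-2}|\nabla(v-u)^+|^2\varphi_R^2$ on the left and uses Young's inequality to absorb the cutoff contribution into it; this is essentially your fallback. The leftover error is then of order $R^{-2}\,|\Omega\times(R,2R)|\sim R^{-1}$. It needs only boundedness of $(v-u)^+$ and of the gradients, not decay. Because it relies on boundedness of $(|\nabla u|+|\nabla v|)^{p-2}$, the range $1<p<2$ needs a separate argument. There the paper uses the test function $((v-u)^+)^\alpha\varphi_R^{\alpha+1}$ and absorbs the cutoff term into the zeroth-order term through the constant $D$.

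\textbf{Your route.} You drop the gradient term, which is nonnegative for every $p>1$. You then bound the cutoff term by $C\sup_{x_N>R}(v-u)^+$, which tends to zero by Lemma \ref{u_n_1} applied to both $u$ and $v$. Boundedness of $\nabla u,\nabla v$ suffices here; the gradient decay you invoke is not needed. The cost is that you use the decay of $(v-u)^+$ at infinity rather than mere boundedness, but that information is already built into the definition of $\theta$. The gain is a single argument valid for all $p>1$.

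\textbf{A further gain, and a fix to Step 1.} Your argument never needs a uniform $D$. It only needs $(g(u)-g(v))(v-u)^+>0$ on $\{v>u\}\cap\Sigma_\theta$, after which Fatou's lemma gives $(v-u)^+=0$. You should phrase Step 1 that way. For $p>2$ one has $g'(t)=(1-t^2)^{p-2}\bigl(1-(2p-1)t^2\bigr)\to 0$ as $t\to1^-$, so the bound $g(u)-g(v)\ge D(v-u)$ borrowed from the paper's setup does not hold in that range. Strict monotonicity of $g$ on $(\eta_1,1)$, by contrast, holds for every $p>1$ once $\eta_1\ge(2p-1)^{-1/2}$.
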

\begin{proof}
Let $R>\theta$ and let $\varphi_R(x_N)$ be a standard cut-off function such that $\varphi_R \equiv 1$ if $x_N\in [\theta, R]$, $\varphi_R \equiv 0$ if $x_N\geq 2R$ and $|\varphi_R'|\leq \frac2R$ if $x_N\in (R,2R)$.\\
For $p\geq 2$, let us consider $(v-u)^+\varphi^2_R$ as test function in \eqref{ACequation}. We obtain
\begin{equation*}
\begin{split}
  \int_{[v \geq u]} |\nabla u|^{p-2} \nabla u \cdot \nabla(v-u)^+\varphi^2_R \ dx&+2 \int_{[v \geq u]} |\nabla u|^{p-2} \nabla u\cdot \nabla \varphi_R (v-u)^+\varphi_R  \ dx\\
&= \int_{[v \geq u]} u(1-u^2)^{p-1} (v-u)^+\varphi^2_R \ dx,
\end{split}
\end{equation*}
and
\begin{equation*}
\begin{split}
  \int_{[v \geq u]} |\nabla v|^{p-2} \nabla v \cdot \nabla(v-u)^+\varphi^2_R \ dx&+2 \int_{[v \geq u]} |\nabla v|^{p-2} \nabla v\cdot \nabla \varphi_R (v-u)^+\varphi_R  \ dx\\
&= \int_{[v \geq u]} v(1-v^2)^{p-1} (v-u)^+\varphi^2_R\ dx.
\end{split}
  \end{equation*}
Taking the difference between the previous two equations and by standard inequalities (see \cite{D,LI}), we obtain
  \begin{equation*}
\begin{split}
  c_1\int_{[v \geq u]}& (|\nabla u|+|\nabla v|)^{p-2} |\nabla (v-u)^+|^2 \varphi^2_R \ dx\\ 
&\leq 2c_2\int_{[v \geq u]} (|\nabla v|+|\nabla u|)^{p-2}|\nabla (v-u)^+|  (v-u)^+|\nabla\varphi_R|\varphi_R  \ dx\\
 &\qquad+ \int_{[v \geq u]} (v(1-v^2)^{p-1}-u(1-u^2)^{p-1}) (v-u)^+\varphi^2_R \ dx.
\end{split}
 \end{equation*}  
Then  using a standard Young inequality, we get
  \begin{equation*}
\begin{split}
  c_1\int_{[v \geq u]} &(|\nabla u|+|\nabla v|)^{p-2} |\nabla (v-u)^+|^2 \varphi^2_R \ dx\\
  &\leq {2c_2\sigma} \int_{[v \geq u]} (|\nabla v|+|\nabla u|)^{p-2}|\nabla (u-v)^+|^2 \varphi_R^2dx\\
  &\qquad+\frac{c_2}{2\sigma R^2} \int_{[v \geq u]} (|\nabla v|+|\nabla u|)^{p-2}((v-u)^+)^2 \ dx\\
  &\qquad+ \int_{[v \geq u]} -D ((v-u)^+)^2\varphi^2_R \ dx.
\end{split}
 \end{equation*}
Let $C:=C(c_2,p, \|\nabla u\|_\infty,\|\nabla v\|_\infty)$, then the previous inequality becomes
\begin{equation*}
\begin{split}
  c_1\int_{[v \geq u]} &(|\nabla u|+|\nabla v|)^{p-2} |\nabla (v-u)^+|^2 \varphi^2_R \ dx\\
&\leq {2c_2\sigma} \int_{[v \geq u]} (|\nabla v|+|\nabla u|)^{p-2}|\nabla (u-v)|^2 \varphi_R^2dx\\
&\qquad+\left(\frac{C}{\sigma R^2}-D\right) \int_{[v \geq u]}((v-u)^+)^2 \ dx.
\end{split}
 \end{equation*}
Next, we choose $\sigma$ small enough so that
$$
 c_1-2c_2\sigma>0,
$$
and then $R$ large such that $\frac{C}{\sigma R^2}-D<0$.\\
Therefore, we arrive to
$$
 \int_{[v \geq u]} (|\nabla u|+|\nabla v|)^{p-2} |\nabla (v-u)^+|^2 \varphi^2_R \ dx<0,
$$
and this implies that $u\geq v$.

The case $p\in(1,2)$ is similar, but it requires more care.  Let us consider a parameter $\alpha\geq 1$ and the function $\phi$ defined as
$$
 \phi:=((v-u)^+)^\alpha \varphi_R^{\alpha+1}.
$$
Testing our problem \eqref{ACequation} with $\phi$, we have
\begin{equation*}
\begin{split}
\alpha\int_{[v \geq u]} &(|\nabla v|^{p-2} \nabla v-|\nabla u|^{p-2} \nabla u) \cdot \nabla(v-u)^+((v-u)^+)^{\alpha-1}\varphi^{\alpha+1}_R \ dx\\
&+(\alpha+1) \int_{[v \geq u]}(|\nabla v|^{p-2} \nabla v-|\nabla u|^{p-2} \nabla u) \cdot \nabla \varphi_R ((v-u)^+\varphi_R)^{\alpha}  \ dx\\
&\qquad= \int_{[v \geq u]} (v(1-v^2)^{p-1}-u(1-u^2)^{p-1}) \phi \ dx.
\end{split}
\end{equation*}
By standard inequalities (see \cite{D,LI}), we arrive to:
\begin{equation*}
\begin{split}
\alpha c_1 \int_{[v \geq u]}& (|\nabla u|+|\nabla v|)^{p-2} |\nabla (v-u)^+|^2 ((v-u)^+)^{\alpha-1}\varphi^{\alpha+1}_R \ dx\\
&\leq (\alpha+1) \int_{[v \geq u]} |\nabla (u-v)|^{p-1}  |\nabla \varphi_R| ((v-u)^+\varphi_R)^{\alpha}  \ dx \\
&\qquad+ \int_{[v \geq u]} (v(1-v^2)^{p-1}-u(1-u^2)^{p-1}) \phi \ dx.
\end{split}
\end{equation*}
Applying Young’s inequality, with exponents $(\alpha+1,\frac{\alpha+1}{\alpha})$, to the first integral on the right-hand side of the previous equation, we obtain
\begin{equation*}
\begin{split}
 \alpha c_1 \int_{[v \geq u]} &(|\nabla u|+|\nabla v|)^{p-2} |\nabla (v-u)^+|^2 ((v-u)^+)^{\alpha-1}\varphi^{\alpha+1}_R \ dx\\
&\leq \alpha C \sigma^{\frac{\alpha+1}{\alpha}}\int_{[v \geq u]} ((v-u)^+\varphi_R)^{\alpha+1}  \ dx+\frac{C}{\sigma^{\alpha+1}} \int_{[v \geq u]}  |\nabla \varphi_R|^{\alpha+1} \ dx\\
&\qquad -D \int_{[v \geq u]} ((v-u)^+)^{\alpha+1} \varphi_R^{\alpha+1} \ dx,
\end{split}
\end{equation*}
where $C:=C(c_2,p, \alpha, \|\nabla u\|_\infty,\|\nabla v\|_\infty)$ is a positive constant.\\
Let us now choose $\sigma$ small enough in such a way that
$$
 \alpha C \sigma^{\frac{\alpha+1}{\alpha}}-D<0,
$$
thus, we have
\begin{eqnarray*}
 \int_{[v \geq u]} (|\nabla u|+|\nabla v|)^{p-2} |\nabla (v-u)^+|^2 ((v-u)^+)^{\alpha-1}\varphi^{\alpha+1}_R \ dx \leq\frac{RC}{\alpha c_1 \sigma^{\alpha+1}R^{\alpha+1}}.
 \end{eqnarray*}
Letting $R\to+\infty$,  by Fatou's Lemma,  we get $u\geq v$.
\end{proof}
\noindent We are now in position to prove Theorem \ref{AC_teo}.
\begin{proof}[Proof of Theorem \ref{AC_teo}]
First of all, we define the translation of the solution $u$ w.r.t. the $x_N$ direction, namely
\begin{equation*}
u_\tau = u(x',x_N+\tau), \quad \text{with } \tau \geq 0.
\end{equation*}
Consider $v$ the one-dimensional solution of \eqref{AC_1}, given by Lemma \ref{MI_AC}; let $\theta$ in \eqref{teta} such that $v\geq \eta_1$ on $[\theta,+\infty)$ and  $v\leq \eta_1$ on $[0,\theta]$. Exploiting Lemma \ref{u_n_1}, let us choose $\tau > 0$ in such a way that $u_\tau > \eta_1$ in $\Omega\times(0,\tau)$. Hence
\begin{equation*}
u_\tau(x',x_N) > v(x',x_N) \quad \text{in } \Omega\times(0,\tau).
\end{equation*}
Notice that, such a choice is justified by the fact that $u$ and $v$ are monotone increasing in the $x_N$ direction and converge uniformly to $1$ as $x_N \rightarrow \infty$.\\
By Lemma \ref{confrontopiccolo}, it follows that $u_\tau \geq v$ in $C_+$. Let,
\begin{equation*}
\bar \tau = \inf\{\tau > 0 : u_\tau \geq v \ \text{in } C_+\}.
\end{equation*}
and suppose, by contradiction, that $\bar \tau > 0$.\\
\textbf{Claim:} $u_{\bar \tau} > v$ in $\overline \Omega \times [0,\theta]$.\\
To prove the claim we argue by contradiction. Suppose that there exists $\bar x=(\bar x',\bar x_N) \in \overline \Omega \times [0,\theta]$ such that $u_{\bar \tau}(\bar x) = v(\bar x)$. 
We distinguish three cases:
\begin{enumerate}
\item[(i)] If $\bar x \in \overline \Omega_0$, by the definition of $u_{\bar \tau}$ and since $u>0$ inside the cylinder $C_+$, we have that $u_{\bar\tau}(x',0) > v(x',0)=0$, for any $x' \in \overline \Omega$.
\item[ii)] If $\bar x \in \Omega \times (0,\theta]$, since by the definition of $\bar \tau$,
\begin{equation*}
u_{\bar \tau} \geq v \quad \text{in } C_+,
\end{equation*}
then, by strong comparison principle (see \cite{D,P-S}), $u_{\bar \tau} \equiv v$ in $\Omega \times (0,\theta+\delta)$, ($\delta>0)$, a contradiction since $u_{\bar \tau}(x',0) > v(x',0)=0$, for any $x' \in \Omega$.
\item[iii)] If instead, $\bar x \in \partial \Omega \times (0,\theta]$, note that if $u_{\bar \tau}(\bar x) = v(\bar x)$, then, by Fermat Theorem, this implies that $|\partial_{x_N} u_{\bar \tau}(\bar x)|=|\partial_{x_N} v(\bar x)|=v'(x_N) >0$ and then, $\nabla u_{\bar \tau}(\bar x) \neq 0$ and $\nabla v(\bar x) \neq 0$. Therefore, standard regularity results imply that the solutions are smooth in a neighborhood of $\bar{x}$.
Then, \cite[Theorem $2.7.1$]{P-S} implies
\begin{equation*}
 \partial_\nu u_{\bar \tau} > \partial_\nu v \quad \text{at } \bar x.
\end{equation*}
However, this is a contradiction by the homogeneous-Neumann boundary conditions on $\partial \Omega\times (0,+\infty)$.
\end{enumerate}
Hence, the claim follows.\\ 
Finally, since $u_{\bar \tau}- v \geq \delta > 0$ and $\overline \Omega \times [0,\theta]$ is compact, by the uniform continuity we have:
\begin{equation*}
u_{\bar \tau - \varepsilon} - v \geq \frac{\delta}{2} > 0, \quad \text{in } \overline \Omega \times [0,\theta], \ \text{for } \varepsilon > 0 \text{ small enough}.
\end{equation*}
Exploiting again Lemma \ref{confrontopiccolo}, we conclude that $u_{\bar \tau - \varepsilon} \geq v$ in $\Omega \times \mathbb{R}^+$, in contradiction with the definition of $\bar \tau$. Thus, $\bar \tau = 0$ and, in particular, $u \geq v$ in $C_+$.\\
In order to prove that $v \geq u$ in $C_+$ an analogous idea can be used. 

Hence, summing up, we proved that $u(x',x_N) = v(x',x_N)=v(x_N)$ in $\Omega \times \mathbb{R}^+$. Thus, thanks to Lemma \ref{MI_AC}, any solution $u$ of \eqref{ACequation} is one-dimensional, namely:
\begin{equation*}
u(x',x_N)=u(x_N)= \tanh\left(\frac{x_N}{(2p-2)^{\frac{1}{p}}}\right).
\end{equation*}
\end{proof}

\subsection{Stability and Liouville type results}\ \\
Relying on the results established in Theorem \ref{Monotonicity}, this subsection is dedicated to the proof of Theorem \ref{Stabilità}.\\
In the following we set $p\geq 2$. The linearized operator of \eqref{main_problem} is given by
\begin{equation}\label{eq:linearized}
\begin{split}
L_u(v,\varphi)&=\int_{C_+}|\nabla u|^{p-2}(\nabla v, \nabla \varphi)\, dx +(p-2)\int_{C_+}|\nabla u |^{p-4}(\nabla u, \nabla v)(\nabla u, \nabla \varphi)\, dx\\
&\qquad\qquad\qquad-\int_{C_+} f'(u)v\varphi\, dx,
\end{split}
\end{equation}
for every $\varphi:=\varphi|_{C_+}$ such that $\varphi \in C^\infty_c(\mathbb{R}^N_+)$.
\begin{definition}\label{def:stab1}
We say that a solution of \eqref{main_problem} is stable if 
\begin{equation}\label{eq:stable}
L_u(\varphi, \varphi)\geq 0,\qquad \forall \varphi\in C^\infty_c(\mathbb{R}^N_+).
\end{equation}
\end{definition}
\begin{theorem}
Let $u$ be a solution of \eqref{main_problem} and assume that $\partial_{x_N}u>0$ in $\Omega \times (0, +\infty)$. Then $u$ is stable.
\end{theorem}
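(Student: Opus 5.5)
The plan is to use the classical argument that a positive solution of the linearized equation forces stability. Set $v:=\partial_{x_N}u>0$ in $\Omega\times(0,+\infty)$. First I will check that $v$ solves the linearized equation, i.e. $L_u(v,\psi)=0$ for every $\psi\in C^\infty_c(\mathbb{R}^N_+)$. Formally this follows by differentiating $-\Delta_p u=f(u)$ in $x_N$. Rigorously, since $p\geq2$, the regularity results recalled from \cite{D-S-2,MMS} give $|\nabla u|^{p-2}\nabla v\in L^2_{loc}$ and $u\in W^{2,2}_{loc}$. Hence one can test the weak formulation with $\partial_{x_N}\psi$, integrate by parts in $x_N$, and use the identity $\partial_{x_N}(|\nabla u|^{p-2}\nabla u)=|\nabla u|^{p-2}\nabla v+(p-2)|\nabla u|^{p-4}(\nabla u,\nabla v)\nabla u$.

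The lateral boundary needs a separate check. The test functions are compactly supported in $\mathbb{R}^N_+$ but need not vanish on $\partial\Omega\times(0,\infty)$. Because the Neumann condition is invariant under $x_N$-translations, $\partial_\nu v=0$ there, and the translated difference quotients $(u(x',x_N+h)-u)/h$ satisfy the same weak mixed problem. So I will derive the linearized identity with difference quotients and pass to the limit $h\to0$ using the $C^{1,\alpha}_{loc}$ bounds together with the weighted $W^{1,2}$ estimates. Here $f'(u)$ should be read as the a.e. derivative of the locally Lipschitz $f$ (in the Lane--Emden application $f$ is $C^1$ anyway).

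Next, fix $\varphi\in C^\infty_c(\mathbb{R}^N_+)$ and test the linearized equation with $\psi=\varphi^2/v$. This is admissible because $\mathrm{supp}\,\varphi\cap\overline C_+$ is a compact subset of $\overline\Omega\times(0,\infty)$, and there $v\geq c>0$ by continuity: $v>0$ in the interior, and on the lateral boundary this needs the strict positivity established in the proof of Theorem \ref{Monotonicity} and Proposition \ref{N_mon}. Write $A(\xi)=|\nabla u|^{p-2}\xi+(p-2)|\nabla u|^{p-4}(\nabla u,\xi)\nabla u$, which is a symmetric positive semidefinite form for $p\geq2$. Expanding $\nabla(\varphi^2/v)=2\varphi\nabla\varphi/v-\varphi^2\nabla v/v^2$ gives
\[
\int f'(u)\varphi^2\,dx=\int\Big(\tfrac{2\varphi}{v}(A\nabla v,\nabla\varphi)-\tfrac{\varphi^2}{v^2}(A\nabla v,\nabla v)\Big)dx .
\]
The Cauchy--Schwarz inequality for the form $A$, combined with Young's inequality, yields
\[
\tfrac{2\varphi}{v}(A\nabla v,\nabla\varphi)\leq \tfrac{\varphi^2}{v^2}(A\nabla v,\nabla v)+(A\nabla\varphi,\nabla\varphi).
\]
Therefore $\int f'(u)\varphi^2\leq\int(A\nabla\varphi,\nabla\varphi)$, which is exactly $L_u(\varphi,\varphi)\geq0$.

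The main obstacle is justifying the linearized equation for $v$ up to the Neumann part of the boundary, and checking that $\varphi^2/v$ is an admissible test function there. This requires both the summability of the weighted second derivatives near $\partial\Omega\times(0,\infty)$, which I would take from \cite{MMS}, and a lower bound for $v$ on compact subsets of the lateral surface. If strict positivity on the lateral boundary is not directly available, I would instead use $\varphi^2/(v+\epsilon)$ and let $\epsilon\to0$. The inequality above still holds with $v+\epsilon$ in place of $v$, since $\nabla(v+\epsilon)=\nabla v$, except for an extra term $\int f'(u)\varphi^2\,\epsilon/(v+\epsilon)$. This term tends to $0$ by dominated convergence, because $v>0$ a.e.
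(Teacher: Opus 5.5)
Your proposal is correct and follows essentially the paper's proof: you derive $L_u(u_{x_N},\cdot)=0$ by testing with $\partial_{x_N}\psi$ and integrating by parts in $x_N$ only, so no lateral boundary terms appear, with regularity up to the boundary taken from \cite{MMS}; you then test with $\psi^2/(\varepsilon+u_{x_N})$, absorb the cross terms (your Cauchy--Schwarz for the positive semidefinite form $A$, valid for $p\geq 2$, is the paper's Young step), and let $\varepsilon\to0$. Two remarks: your $\varepsilon$-regularized fallback is the version you actually need, since neither the hypothesis nor Theorem \ref{Monotonicity} gives $u_{x_N}>0$ on $\partial\Omega\times(0,+\infty)$; and $u\in W^{2,2}_{loc}$ is not available for $p\geq 3$, but the argument only uses $|\nabla u|^{p-2}\nabla u\in W^{1,2}_{loc}$ and $|\nabla u|^{p-2}|D^2u|^2\in L^1_{loc}$.
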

\begin{proof}
In the following let $\varepsilon>0$ and let  us denote $u_{x_N}=\partial_{x_N}u$. We define  the following test function
\[\varphi=\frac{\psi^2}{\varepsilon + u_{x_N}},\]
for every $\psi:=\psi|_{C_+}$ such that $\psi \in C^\infty_c(\mathbb{R}^N_+)$.
If we consider $v=u_{x_N}$ in \eqref{eq:linearized}, we have that
\begin{equation}\label{eq:linzer}
L_u(u_{x_N}, \varphi)=0.
\end{equation}
This fact involves some technical arguments, so we provide some details.
The main idea is to  take a smooth test function $\varphi \in C^\infty_c(\mathbb{R}^N_+)$, and then, consider  $\varphi_{x_N}$ as test function in \eqref{main_problem}. 
 By \cite{MMS}, we know that 
 \[|\nabla u|^{p-2}u_{x_N}\in W^{1,2}(\Omega\times (0,t)), \quad \text{for }t>0.\]
Integration by parts then yields \eqref{eq:linzer}.\\
Therefore,
\begin{eqnarray}\label{eq:exp1}
\nonumber 0=L_u(u_{x_N},\varphi)&=&-\int_{C_+}\frac{|\nabla u|^{p-2}}{({\varepsilon + u_{x_N})^2}}|\nabla u_{x_N}|^2\psi^2\, dx-(p-2)\int_{C_+}\frac{|\nabla u|^{p-4}}{({\varepsilon + u_{x_N})^2}}(\nabla u, \nabla u_{x_N})^2\psi^2\, dx\\\nonumber
&&+2\int_{C_+}|\nabla u|^{p-2}(\nabla u_{x_N},\nabla \psi) \frac{\psi}{\varepsilon + u_{x_N}}\, dx\\
&&+2(p-2)\int_{C_+}|\nabla u|^{p-4}(\nabla u, \nabla u_{x_N})(\nabla u, \nabla \psi)\frac{\psi}{\varepsilon+u_{x_N}}\, dx\\\nonumber
&&-\int_{C_+}f'(u)u_{x_N}\frac{\psi^2}{\varepsilon + u_{x_N}}\, dx.
\end{eqnarray}
The classical Young inequality implies 
\begin{eqnarray*}
&&2\int_{C_+}|\nabla u|^{p-2}(\nabla u_{x_N},\nabla \psi) \frac{\psi}{\varepsilon + u_{x_N}}\, dx\\\nonumber
&&\qquad\leq\int_{C_+}\frac{|\nabla u|^{p-2}}{({\varepsilon + u_{x_N})^2}}|\nabla u_{x_N}|^2\psi^2\, dx+\int_{C_+}|\nabla u|^{p-2}|\nabla \psi|^2\, dx,
\end{eqnarray*}
and
\begin{eqnarray*}
&&2(p-2)\int_{C_+}|\nabla u|^{p-4}(\nabla u, \nabla u_{x_N})(\nabla u, \nabla \psi)\frac{\psi}{\varepsilon+u_{x_N}}\, dx\\\nonumber
&&\qquad\leq(p-2)\int_{C_+}\frac{|\nabla u|^{p-4}}{({\varepsilon + u_{x_N})^2}}(\nabla u, \nabla u_{x_N})^2\psi^2\, dx\\\nonumber
&&\qquad \qquad+(p-2)\int_{C_+}|\nabla u|^{p-4}|\nabla u|^2(\nabla u,\nabla \psi )^2\, dx.
\end{eqnarray*}
Therefore, from \eqref{eq:exp1} we deduce
\begin{equation}\label{eq:exp2}
0\leq\int_{C_+}|\nabla u|^{p-2}|\nabla \psi|^2\, dx
+(p-2)\int_{C_+}|\nabla u|^{p-4}(\nabla u,\nabla \psi )^2\, dx-\int_{C_+}f'(u)u_{x_N}\frac{\psi^2}{\varepsilon + u_{x_N}}\, dx.
\end{equation}  
Letting $\varepsilon\rightarrow 0$, by Fatou Lemma, we obtain the thesis \eqref{eq:stable}.
\end{proof}
The following proposition provides a control on the decay of our solution in the cylinder $C_+$. To prove the nonexistence result stated in Theorem \ref{Stabilità}, we need to define the problem in the entire cylinder $$C:=\Omega \times (-\infty, +\infty).$$ For this reason, we prove the following proposition in the whole cylinder.
\begin{proposition}\label{pro:stab}
Let $ u\in C_{loc}^{1,\alpha}(\overline C)$ be a solution to 
\begin{equation}\label{eq:solcylinder}
\begin{cases}
-\Delta_{p} u = | u|^{q-1} u &  \text{in } C\\
\partial_{\nu}  u = 0  & \text{on } \partial{C},  
\end{cases}
\end{equation}
where $p\geq 2$ and $q>p-1$. Let us assume that $u$ is stable, that is 
\begin{equation}\label{eq:stab2}
\int_{C}|\nabla u|^{p-2}|\nabla \varphi|^2\, dx +(p-2)\int_{C}|\nabla u |^{p-4}(\nabla u, \nabla \varphi)^2\, dx \geq q\int_{C}|u|^{q-1} \varphi^2\, dx,
\end{equation}
for all $\varphi \in C^\infty_c(\mathbb{R}^N)$.
Then for every integer $k$ satisfying 
\begin{equation}\label{eq:fra1}
k\geq \max\left\{\frac{q+1}{q-(p-1)}; 2\right\},
\end{equation}
there exists a positive constant $C=C(k,p,q)$ such that 
\begin{equation}\label{eq:P145}
\int_{C}|\nabla  u|^p\psi^{pk}+| u|^{q+1}\psi^{pk}\, dx\leq C\int_{C}|\nabla \psi|^{p\frac{q+1}{q-(p-1)}}\, dx,\end{equation}
for every $\psi \in C^\infty_c(\mathbb{R}^N)$, with $0\leq \psi\leq 1$.
\end{proposition}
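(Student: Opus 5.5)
This is the Farina-type energy estimate for stable solutions, as adapted in \cite{DFSV} to the quasilinear case. The Neumann condition on $\partial C$ only makes boundary terms vanish in the integrations by parts, so the test functions may be taken in $C^\infty_c(\mathbb{R}^N)$ restricted to $\overline C$. Since $p\ge 2$, the quadratic form in \eqref{eq:stab2} is bounded above by $(p-1)\int_C|\nabla u|^{p-2}|\nabla\varphi|^2$, and this is the only form of stability we will use.

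\textbf{Step 1: equation identity.} Test \eqref{eq:solcylinder} with $\varphi=|u|^{\gamma-1}u\,\eta^2$, where $\gamma\ge 1$ and $\eta=\psi^{m}$ for a suitable power $m$; this is admissible by $C^{1,\alpha}$ regularity and density. This gives
\[
\gamma\int_C|\nabla u|^p|u|^{\gamma-1}\eta^2+2\int_C|\nabla u|^{p-2}(\nabla u,\nabla\eta)|u|^{\gamma-1}u\,\eta=\int_C|u|^{q+\gamma}\eta^2 .
\]

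\textbf{Step 2: stability identity.} Plug $\varphi=|u|^{\frac{\gamma-1}{2}}u\,\eta$ into \eqref{eq:stab2}, using the upper bound on the quadratic form. We have $|\nabla\varphi|^2\le (1+\tfrac{\gamma+1}{2})^2|u|^{\gamma-1}|\nabla u|^2\eta^2(1+\epsilon)+C_\epsilon|u|^{\gamma+1}|\nabla\eta|^2$, so
\[
q\int_C|u|^{q+\gamma}\eta^2\le (p-1)\Big(\tfrac{\gamma+1}{2}\Big)^2(1+\epsilon)\int_C|\nabla u|^p|u|^{\gamma-1}\eta^2+C_\epsilon\int_C|\nabla u|^{p-2}|u|^{\gamma+1}|\nabla\eta|^2 .
\]

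\textbf{Step 3: combine.} Use Step 1 to replace $\int|\nabla u|^p|u|^{\gamma-1}\eta^2$ by $\gamma^{-1}\int|u|^{q+\gamma}\eta^2$ plus the cross term. The cross term is absorbed via Young's inequality into a small multiple of the gradient term plus $C\int|\nabla u|^{p-2}|u|^{\gamma+1}|\nabla\eta|^2$. The result is a bound of the form $\big(q-(p-1)\frac{(\gamma+1)^2}{4\gamma}-\epsilon'\big)\int|u|^{q+\gamma}\eta^2\le C\int|\nabla u|^{p-2}|u|^{\gamma+1}|\nabla\eta|^2$, and we then recover the gradient term from Step 1 as well. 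Since $q>p-1$, the choice $\gamma=1$ already gives a positive coefficient, since $q-(p-1)>0$. So I would take $\gamma=1$, which is exactly what the statement \eqref{eq:P145} needs: $|\nabla u|^p$ and $|u|^{q+1}$.

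\textbf{Step 4: remainder.} This is the main obstacle. The right side contains $\int|\nabla u|^{p-2}u^2|\nabla\eta|^2$. With $\eta^2=\psi^{pk}$, we have $|\nabla\eta|^2\le C\psi^{pk-2}|\nabla\psi|^2$. Apply Young with exponents $\frac{p}{p-2}$ and $\frac p2$ to split it into $\delta|\nabla u|^p\psi^{pk}$ plus $C u^p\psi^{pk-p}|\nabla\psi|^p$; the first piece is absorbed. For the second, apply Young with exponents $\frac{q+1}{p}$ and $\frac{q+1}{q+1-p}$ to get $\delta|u|^{q+1}\psi^{pk}+C\psi^{pk-\frac{p(q+1)}{q+1-p}}|\nabla\psi|^{\frac{p(q+1)}{q+1-p}}$. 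The power of $\psi$ here must be nonnegative, and $k\ge\frac{q+1}{q-(p-1)}$ ensures this; $k\ge2$ keeps $\psi^{pk-2}$ smooth in the case $p=2$. Since $\psi\le1$, this yields \eqref{eq:P145}.

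Caveat: this last step naturally produces the exponent $\frac{p(q+1)}{q+1-p}$. The stated exponent $p\frac{q+1}{q-(p-1)}$ is the same quantity, since $q-(p-1)=q+1-p$, so the two agree and the bound is exactly \eqref{eq:P145}.
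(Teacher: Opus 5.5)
Your proposal is correct and is essentially the paper's argument: with $\gamma=1$ and $\eta=\psi^{pk/2}$, your test functions $u\eta^2$ in the equation and $u\eta$ in the stability inequality are exactly the paper's $u\varphi^p$ and $u\varphi^{p/2}$ with $\varphi=\psi^k$, and positivity comes from the same bound of the stability quadratic form by $p-1$ times its first term together with $q>p-1$. The difference is only bookkeeping: you keep the remainder $\int_C|\nabla u|^{p-2}u^2|\nabla\eta|^2\,dx$, split it by Young at the end and absorb both terms at once, whereas the paper first reduces it to $\int_C|u|^p|\nabla\varphi|^p\,dx$ and bounds $|u|^{q+1}$ and $|\nabla u|^p$ in two separate H\"older steps (also, the factor $\big(1+\tfrac{\gamma+1}{2}\big)^2$ in your first estimate of $|\nabla\varphi|^2$ is a typo for $\big(\tfrac{\gamma+1}{2}\big)^2$, which your next display already uses).
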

\begin{proof}
The proof of this proposition follows the approach of \cite{DFSV}. Here we provide  a general overview.

{\bf Step 1.} We claim that for any  
$0<\varepsilon<1$, there exists a positive constant $C_\varepsilon$ (depending on $\varepsilon$ and $p$) such  that
\begin{equation}\label{eq:P141}
(1 -\varepsilon^{2})\int_{C}|\nabla u|^p\varphi^p\, dx \leq \int_{C}|u|^{q+1}\varphi^p\, dx+C_\varepsilon\int_{C}|\nabla \varphi|^p |u|^{p}\, dx,
\end{equation}
for any $\varphi \in C^\infty_c(\mathbb{R}^N)$.
This step follows using $\xi=u\varphi^p$ as test function in \eqref{eq:solcylinder} and by the Young inequality.

{\bf Step 2.} Set 
\[\alpha_\varepsilon=\frac{q}{p-1}-\frac{1+\varepsilon^2}{1-\varepsilon^2}.\]
Then, there exists $\beta=\beta(p,q,\varepsilon)$ such that 
\begin{equation}\label{eq:P142} 
\alpha\int_{C}|u|^{q+1}\varphi^p\, dx\leq \beta \int_{C}|\nabla \varphi|^p|u|^{p}\, dx,
\end{equation}
for any $\varphi \in C^\infty_c(\mathbb{R}^N)$.\ \\
The idea to prove this step is to exploit \eqref{eq:linearized} and \eqref{eq:stable}. Indeed, since by hypothesis  $u$ is a stable solution of \eqref{eq:solcylinder} we deduce that
\[q\int_{C}|u|^{q-1}\varphi^2\, dx \leq (p-1)\int_{C}|\nabla u|^{p-2}|\nabla \varphi|^2\, dx.\]
Using in the above inequality the test function $u\varphi^{\frac p2}$ and then the Young inequality (after some computations), we get \eqref{eq:P142}. 

{\bf Step 3.} There exists a constant $C=C(k,p,q)>0$ such that 
\begin{equation}\label{eq:P143} 
\int_{C}|u|^{q+1}\psi^{pk}\, dx \leq C\int_{C}|\nabla \psi|^{p\frac{q+1}{q-(p-1)}} \, dx,
\end{equation}
for any $\psi\in C^\infty_c(\mathbb{R}^N)$ with $0\leq\psi\leq 1$.\\
Inequality \eqref{eq:P143} follows using $\varphi=\psi^k $ with $\psi\in C^\infty_c(\mathbb{R}^N)$ and $0\leq\psi\leq 1$, as test function in \eqref{eq:P142} and by assumption \eqref{eq:fra1}.

{\bf Step 4.} There exists a constant $C=C(k,p,q)>0$ such that 
\begin{equation}\label{eq:P144}
\int_{C}|\nabla u|^p\psi^{pk}\, dx\leq C\int_{C}|\nabla \psi|^{p\frac{q+1}{q-(p-1)}}\, dx,
\end{equation} 
for any $\psi \in C^\infty_c(\mathbb{R}^N)$ with $0\leq\psi\leq 1$.\\
Combining \eqref{eq:P141} and \eqref{eq:P142}, we deduce that
\[\int_{C}|\nabla u|^p\varphi^p\, dx \leq \bar C\int_{C}|\nabla \varphi|^p|u|^{p}\, dx,\]
where 
$\bar C_\varepsilon=\frac{C_\varepsilon}{1-\varepsilon^2}+\frac{\beta}{\alpha(1-\varepsilon^2)}.$ Using $\varphi=\psi^k $, with $\psi\in C^\infty_c(\mathbb{R}^N)$ and $0\leq\psi\leq 1$, as test function in the previous inequality, we obtain
\begin{equation*}
\begin{split}
\int_{C}|\nabla u|^p\psi^{pk}\,dx&\leq C\int_{C} \psi^{p(k-1)}|\nabla \psi|^p|u|^{p}\, dx\\\nonumber
&\leq\left(\int_{C}\left(|u|^{p}\psi^{p(k-1)}\right)^\frac{q+1}{p}\,dx\right)^\frac{p}{q+1}\left(\int_{C}\left(|\nabla \psi|^{p\frac{q+1}{q-(p-1)}}\right)\, dx\right)^\frac{q-(p-1)}{q+1}.
\end{split}
\end{equation*}
Finally \eqref{eq:P144} follows using \eqref{eq:P143}  and the fact that 
\[(k-1)\left(q+1\right)\geq pk,\]
since we assumed $k\geq \frac{q+1}{q-(p-1)}$, see \eqref{eq:fra1}.

The proof concludes by combining equations \eqref{eq:P143} and \eqref{eq:P144}.
\end{proof}

We are now in position to prove Theorem \ref{Stabilità}.

\begin{proof}[Proof of Theorem \ref{Stabilità}]
Let $u$ be a solution of \eqref{main_problem_S}. Define $\tilde u: C\rightarrow R$ as
\[\tilde u:=
\begin{cases} u(x', x_N) & \text{if } x_N\geq 0\\
-u(x', -x_N) & \text{if } x_N<0.
 \end{cases}\]
Then we deduce that $\tilde u$ is a solution to  \eqref{eq:solcylinder}. Moreover, exploiting the fact that $u$ is stable in the sense of Definition \ref{def:stab1}, we deduce \eqref{eq:stab2}.\\
This theorem now follows as a corollary of Proposition \ref{pro:stab}, once we show that the right-hand side of \eqref{eq:P145} vanishes. 
To see this, let us define the test function  
$\psi\in C_c^{\infty}(\mathbb{R}^N)$, $\psi(x)=\varphi(x_N)$, where $\varphi (s)\in C_c^{\infty}(\mathbb{R})$ such that
\[\varphi (s)=\begin{cases}
1, & \text{if } |s|\leq R,\\
0, & \text{if } |s|\geq  2R,\\
\left |\partial_s \varphi(s)\right |\leq \frac 2R, & \text{if } R\leq |s| \leq 2 R.
\end{cases}
\]
Using this test function  in \eqref{eq:P145}, we obtain 
\begin{equation}\label{eq:fra2}
\int_{C_+}|\nabla u|^p\psi^{pk}\, dx+|u|^{q+1}\psi^{pk}\, dx\leq CR^{1-p\frac{q+1}{q-(p-1)}}.
\end{equation}
We observe that 
\[{1-p\frac{q+1}{q-(p-1)}}<0,\]
since the function 
$g(s)=\frac{s+1}{s-(p-1)}$ strictly decreasing and satisfies
\[\lim_{s\rightarrow (p-1)^+} g(s)=+\infty\quad \text{and}\quad  \lim_{s\rightarrow+\infty} g(s)=1.\]
The conclusion now follows by applying Fatou's Lemma in \eqref{eq:fra2}.

\end{proof}
{\bf Acknowledgements} All the authors are partially supported also by Gruppo Nazionale per l’Analisi Matematica, la Probabilit\'a e le loro Applicazioni (GNAMPA) of
the Istituto Nazionale di Alta Matematica (INdAM).\\

{\bf Data Availability Statement} All data generated or analyzed during this study are included in this published article.
\end{document}